\documentclass[10pt,a4paper,reqno]{amsart}
\usepackage{amsmath,amsthm,amssymb}
\usepackage{mathrsfs,mathtools,nccmath,enumerate}
\usepackage{tikz-cd}

\usepackage[thinlines]{easytable}
\usepackage{amsfonts}
\usepackage{multicol}
\usepackage[mathscr]{euscript}

\usepackage{graphicx,caption,subcaption,float}
\usepackage{hyperref}

\usepackage[super]{nth}

\newtheorem{theorem}{Theorem}[section]
\newtheorem{lemma}[theorem]{Lemma}
\newtheorem{proposition}[theorem]{Proposition}

\theoremstyle{definition}
\newtheorem{definition}{Definition}[section]

\newtheorem{example}{Example}[section]
\theoremstyle{remark}
\newtheorem{remark}[theorem]{Remark}

\usepackage[export]{adjustbox} 

\begin{document}

		\title[Arc shift move and region arc shift move for twisted knots]{Arc shift move and region arc shift move for twisted knots}
	
	
	\author[T. MAHATO]{Tumpa Mahato}
	\address{Department of Mathematics,\\ Indian Institute of Technology, Ropar, Punjab 140001, India}
	\email{staff.tumpa.mahato@iitrpr.ac.in}
	
	\author[P. MADETI]{Prabhakar Madeti}
	\address{Department of Mathematics,\\ Indian Institute of Technology, Ropar, Punjab 140001, India}
	\email{prabhakar@iitrpr.ac.in}

	\makeatletter
	\@namedef{subjclassname@2020}{%
		\textup{2020} Mathematics Subject Classification}
	\makeatother
	\subjclass[2020]{Primary 57K10; Secondary 57K12}
	
\keywords{Twisted knot, unknotting operation, arc shift number, region arc shift number, forbidden number. }
	\thanks{This work was supported by  Anusandhan National Research Foundation (ANRF), Government of India(Grant no: CRG/2023/004921/343)}

	\begin{abstract}
		In this paper, we study the unknotting operation for twisted knots, called arc shift move. First, we find a family of twisted knots with arc shift number $n$ for any given $n \in \mathbb{N}$. Then we define a new unknotting operation, called the region arc shift move for twisted knots and find  family of twisted knots  whose region arc shift number is less than or equal to $n$ for any given $n \in \mathbb{N}$. Later, we explore bounds for region arc shift number and forbidden number.
	\end{abstract}

	\maketitle
	\section{Introduction}\label{sec:intro}
	
	Twisted knots were introduced by Bourgoin\cite{Bur} as a generalization of virtual knot theory. They can be described by planar diagrams containing classical crossings, virtual crossings, and bars on arcs, together with suitable Reidemeister-type moves. This theory includes both classical and virtual knots as special cases, but it also exhibits new behavior that does not appear in either setting.
	In particular, crossing changes do not suffice to unknot twisted knots, motivating the study of alternative unknotting operations and their associated numerical invariants. To address this problem, Negi and Prabhakar \cite{Negi} extended the arc shift move from virtual knots to twisted knots and defined the arc shift number as the minimum number of arc shift moves required to transform a twisted knot into the unknot.
	
	A natural question is whether there exists a twisted knot with arc shift number $n$ for any given $n \in \mathbb{N}$. In this paper, we answer this question by constructing families of twisted knots where each knot in he family has arc shift number $n(\in \mathbb{N})$. In addition, we define a new diagrammatic move called the region arc shift move for twisted knot and prove that this move is an unknotting operation. Then, we introduce two new numerical invariants: the forbidden number and the region arc shift number. It is important to find strict bounds for any unknotting number as it aids in computing its exact value for any knot. Therefore, we find lower bounds for forbidden number and upper bounds for region arc shift number using its relation with other existing invariants.

	This paper is organized as follows. Section~\ref{sec:pre} reviews basic definitions and results related to twisted knots, arc shift move. It also briefly discusses the polynomial invariant $Q(s,t)$ \cite{Naoko1} that we use to distinguish knots in the twisted knot families in Subsection~\ref{sec:polyinv}. Section~\ref{sec:fam} provides families of twisted knots that realizes any positive integer as its arc shift number. In Section~\ref{sec:ras}, we define an unknotting operation, called the region arc shift move. Section~\ref{sec:rasno} discusses two numerical invariants of twisted knots: the region arc shift number and the forbidden number and explores their bounds.
	
		\begin{figure}[h!]
		\centering
		\subfloat[Classical Reidemeister Moves]{\includegraphics[width=0.75\textwidth]{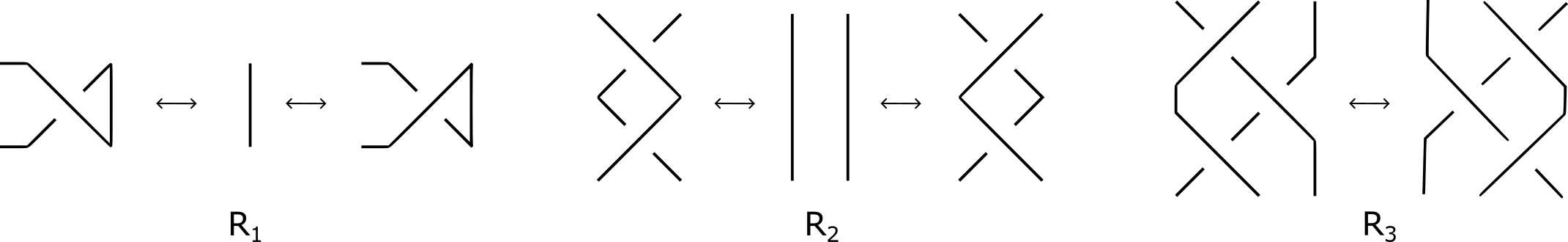}}
		\quad
		
		\subfloat[Virtual Reidemeister Moves]{\includegraphics[width=0.65\textwidth]{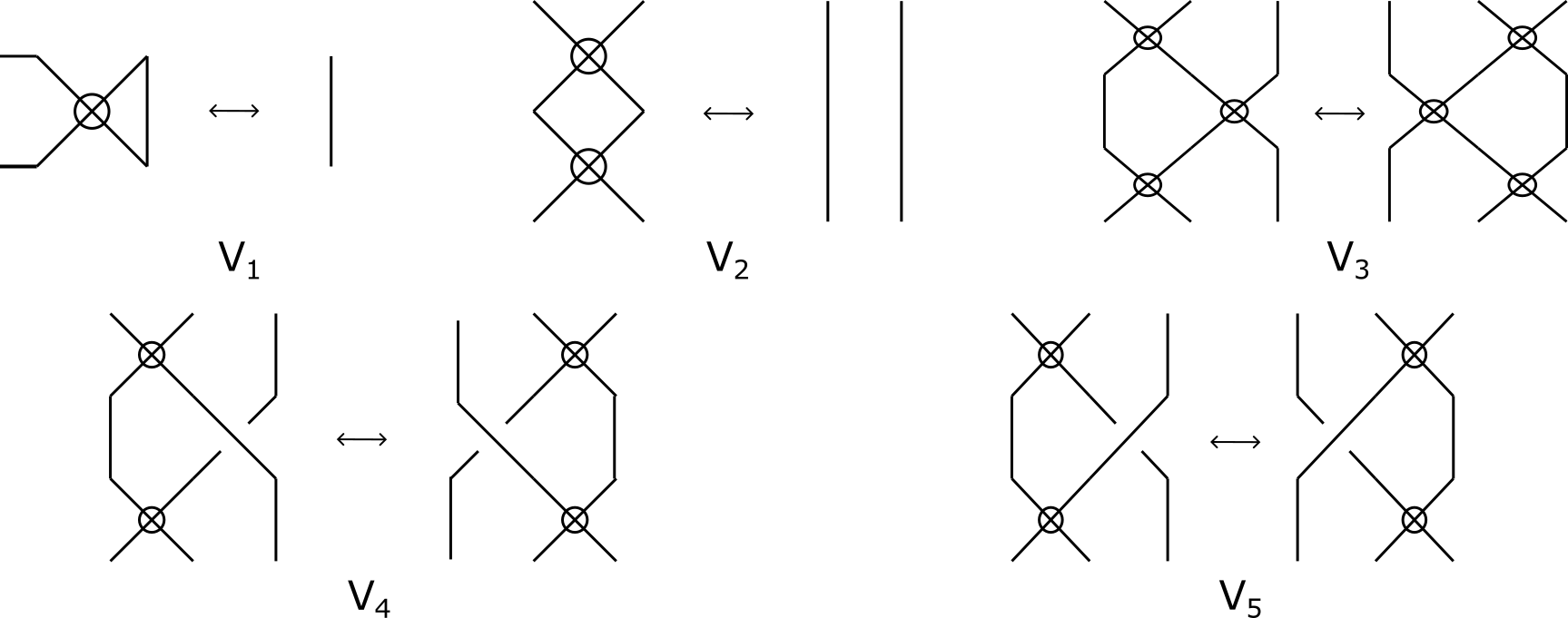}} 
		\newline
		\subfloat[Twisted Moves]{\includegraphics[width=0.75
			\textwidth]{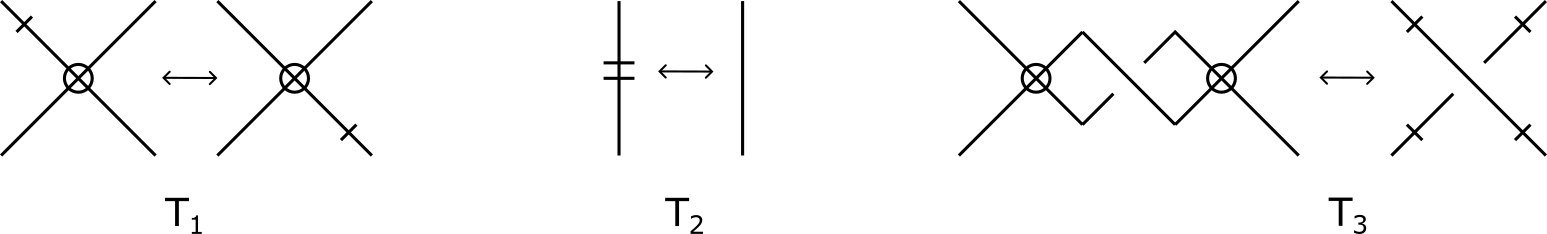}}
		\caption{Extended Reidemeister moves.}
		\label{fig:moves}
	\end{figure}

	\section{Preliminaries}\label{sec:pre}
	Twisted link diagrams are defined as marked generic planar curves, where the
	markings identify the usual classical crossings, virtual crossings, and bars on edges. New Reidemeister moves $T_{1},T_{2},T_{3}$ are included with the existing generalized Reidemeister moves $R_{1}, R_{2}, R_{3}, V_{1}, V_{2}, V_{3}, V_{4}$ (Fig.~\ref{fig:moves}) to define equivalence relation between twisted link diagrams. 
 
	\begin{remark}
		A trivial twisted knot is a classical unknot without a bar or with one bar.
	\end{remark}
	Gauss code and Gauss diagram are also generalized for twisted knot diagrams by recording bars on the arcs between two classical crossings (Fig.~\ref{fig:gausscode}). 
	
	\begin{figure}[h]
		\centering
		\includegraphics[width=0.35\textwidth]{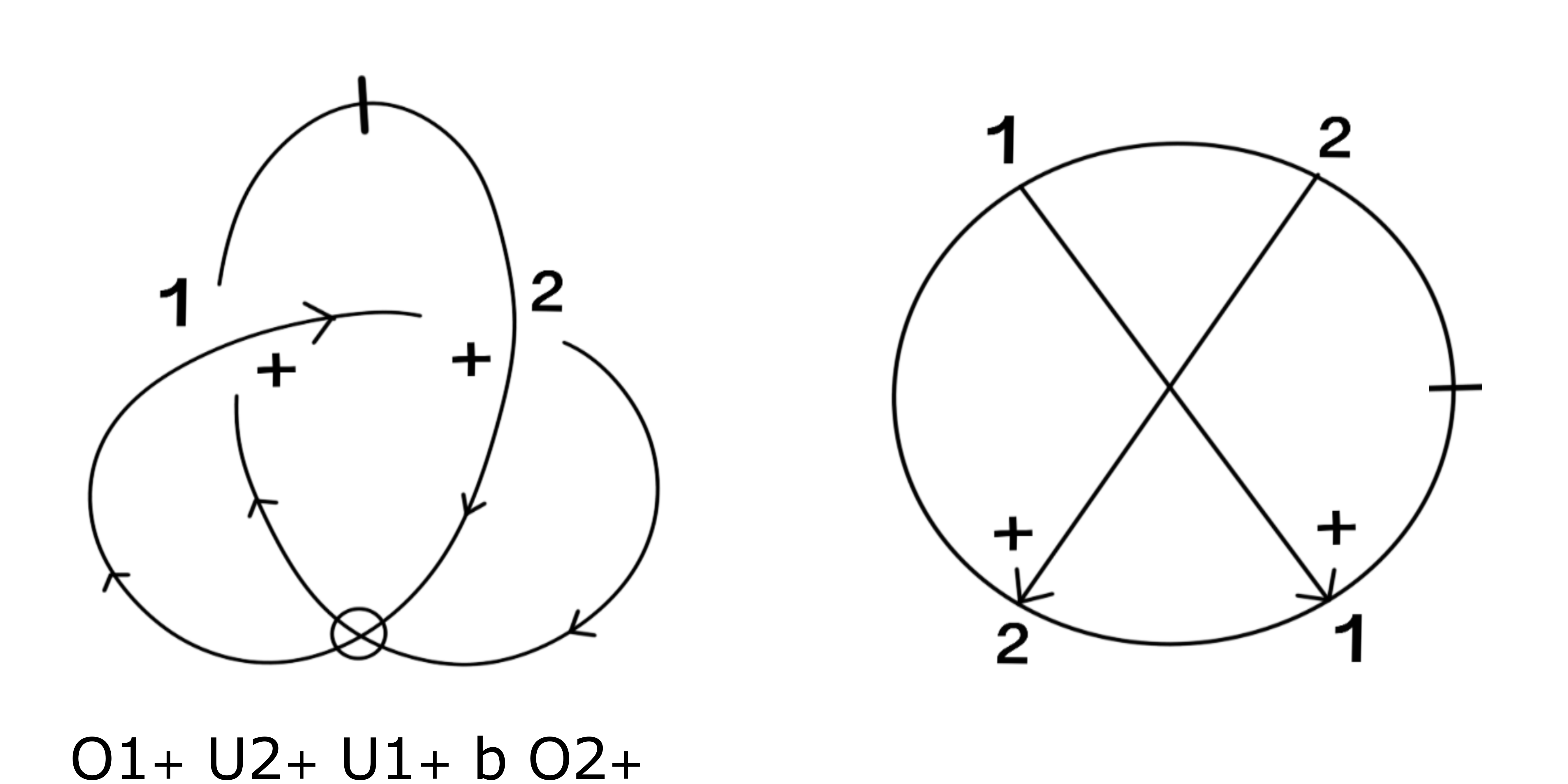}
		\caption{Gauss code and Gauss diagram of twisted knots.}
		\label{fig:gausscode}
	\end{figure}
	Later forbidden moves $F_{3} (\text{or} F_{4})$ and $T_{4}$ (Fig.~\ref{fig:forbidden1} and \ref{fig:forbidden2}) are introduced for twisted knot diagrams and Xue and Deng \cite{Xue} proved the following. 
	\begin{theorem}\label{th:for}\cite{Xue}
		Any Gauss diagram of twisted knot can be changed into Gauss diagram of a trivial
		knot (with a bar) by a sequence of moves of types $R_1, R_2, R_3, T_2, T_3$ and forbidden moves $T_4,
		F_1$ (or $F_2$) and $F_3$ (or $F_4$).
	\end{theorem}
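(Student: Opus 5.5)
The plan is to work entirely at the level of Gauss diagrams and reduce an arbitrary twisted Gauss diagram in three stages: first eliminate bars (or push them to a single location), then eliminate all chords, leaving the trivial diagram with at most one bar. Recall that a twisted Gauss diagram is a circle with signed oriented chords (classical crossings) together with a finite set of bar points on the circle away from chord endpoints. Virtual crossings are invisible in the Gauss diagram, so the moves $V_1$--$V_4$ and $T_1$ are automatically accounted for.

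\textbf{Step 1: collect bars.} The move $T_2$ cancels two adjacent bars on an arc between consecutive chord endpoints. The forbidden move $T_4$ lets a bar pass through a chord endpoint, at the cost of changing the local data of that chord (its sign or orientation). Using $T_4$ repeatedly, I will slide every bar along the circle to one fixed base point. I then cancel them in pairs with $T_2$, so at most one bar remains. Each passage changes a chord's data, but that does not matter, since Step 2 is insensitive to chord signs and orientations. The move $T_3$ can be used instead of $T_4$ where applicable, but $T_4$ alone suffices.

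\textbf{Step 2: remove chords.} We now have a Gauss diagram of a virtual knot diagram with at most one bar, placed at the base point. The forbidden moves $F_1$/$F_2$ (and $F_3$/$F_4$) exchange the order of two adjacent chord endpoints on the circle, namely two heads or two tails. Combined with $R_3$, these allow arbitrary transpositions of adjacent endpoints, which is the standard Kanenobu/Nelson argument that forbidden moves unknot virtual knots. Choose a chord $c$ and an arc of the circle between its two endpoints that avoids the base point. Using forbidden moves, I move all other endpoints out of that arc, so the endpoints of $c$ become adjacent and $c$ is an isolated chord, which $R_1$ deletes. By induction on the number of chords, we obtain the empty chord diagram with at most one bar, which is a trivial knot by the Remark.

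\textbf{Main obstacle.} The delicate point is Step 2's claim that any two adjacent endpoints can be swapped. The moves $F_1$ and $F_2$ only swap endpoints of the same type (two heads, or two tails, depending on the version), so swapping a head past a tail requires either conjugating by $R_2$, which introduces a cancelling pair of chords, or using $F_3$/$F_4$. I would handle this with the known lemma, as in Nelson's proof for virtual knots, that one type of forbidden move together with Reidemeister moves generates the other. I would also check that the inserted $R_2$ chords never cross the base point carrying the bar.
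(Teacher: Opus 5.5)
Step~1 relies on a misreading of $T_4$. The paper only quotes this theorem from Xue and Deng, but its own uses of $T_4$ (Propositions~\ref{pr:T4} and~\ref{pr:for4}) show what the move is. $T_4$ deletes a curl that carries a bar on its loop. In the Gauss diagram, it removes an isolated chord with one bar on its short arc, and the bar remains. It does not slide a bar through a chord endpoint, and no other listed move does either. $T_1$ only concerns virtual crossings, and $T_2$ creates or cancels adjacent pairs. $T_3$ acts on all four arcs around a classical crossing at once and reverses the chord, so pushing a bar past one endpoint with it leaves a pair of bars flanking the other endpoint. Hence the bars cannot be collected at a base point, and the reduction to a virtual Gauss diagram with at most one bar is unsupported. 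A warning sign is that your argument never uses $F_3$/$F_4$ or the actual $T_4$, which are exactly the moves that handle bars trapped between chord endpoints. Step~1 can be repaired, but only by a derived move that uses them:
\begin{enumerate}
\item Read $T_4$ backwards to turn the bar in front of an endpoint $x$ into a barred curl $a\,|\,b\,x$, choosing the curl so that $a$ has the same type as $x$.
\item Move $x$ past $b$. There is no bar between them, but this is a head--tail swap.
\item Move $x$ past $a$ across the bar by $F_3$ (or $F_4$), reaching $x\,|\,a\,b$.
\item The curl now has no bar on its loop, so $R_1$ deletes it. Net effect: $|\,x \to x\,|$.
\end{enumerate}

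Your handling of the head--tail obstacle also needs correcting. The lemma you attribute to Nelson is false for virtual knots. The overpass move preserves the knot group; virtual knots modulo it are the welded knots, which are far from trivial. So together with Reidemeister moves it cannot generate the underpass move, and Kanenobu and Nelson both use both moves. In the twisted setting one of $F_1$/$F_2$ does suffice, but the reason is $T_3$. Reversing both chords by $T_3$, with $T_2$ supplying and later absorbing the needed bars, turns two adjacent heads into two adjacent tails. So $F_2$ is a $T_3$-conjugate of $F_1$, and likewise $F_4$ of $F_3$. That still gives only same-type swaps. A head--tail swap $h_c\,t_d \to t_d\,h_c$ is obtained separately:
\begin{enumerate}
\item By $R_2$, insert arrows $e,e'$ with the tail of $e$ next to the tail of $c$ and the head of $e$ next to the head of $d$, choosing $e$ and the insertion sides suitably.
\item Apply $R_3$ to the triangle $c,d,e$. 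Its middle strand carries exactly one head and one tail, so this swaps $h_c$ and $t_d$.
\item Use tail--tail and head--head swaps to make the endpoints of $e,e'$ adjacent again, and remove the pair by $R_2$.
\end{enumerate}
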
	
	\begin{figure}[h!]
	\centering
	\subfloat[Forbidden moves $F_1$ and $F_2$.]{\includegraphics[width=0.6\textwidth]{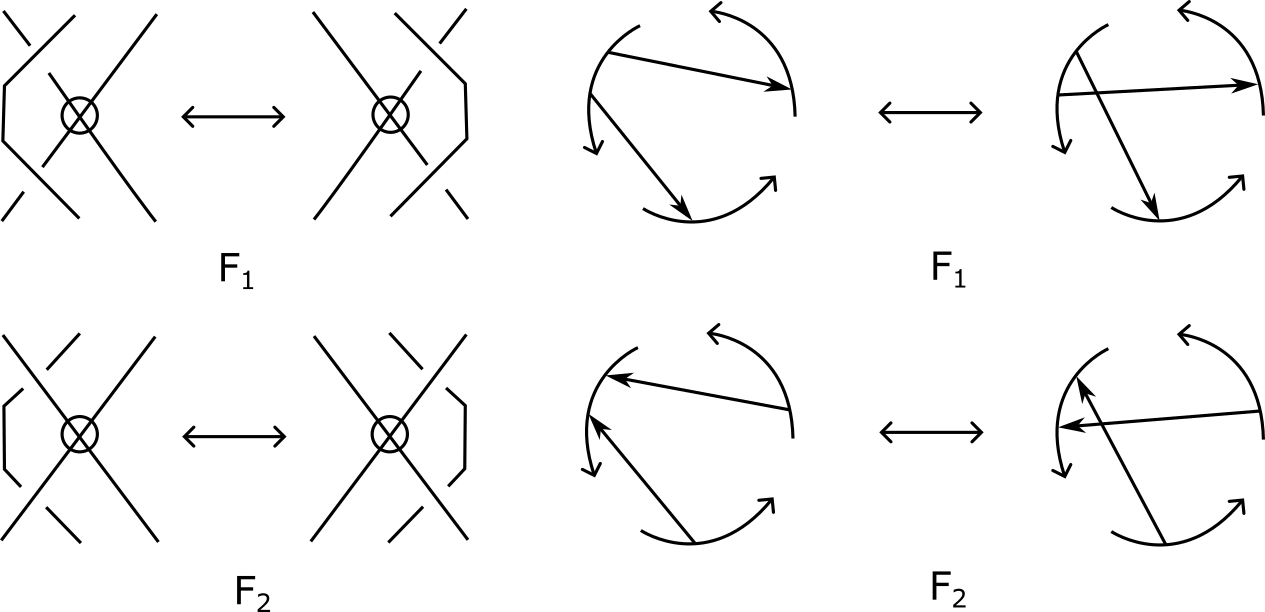}}
	\quad
	\subfloat[Forbidden moves $F_3$ and $F_4$.]{\includegraphics[width=0.6\textwidth]{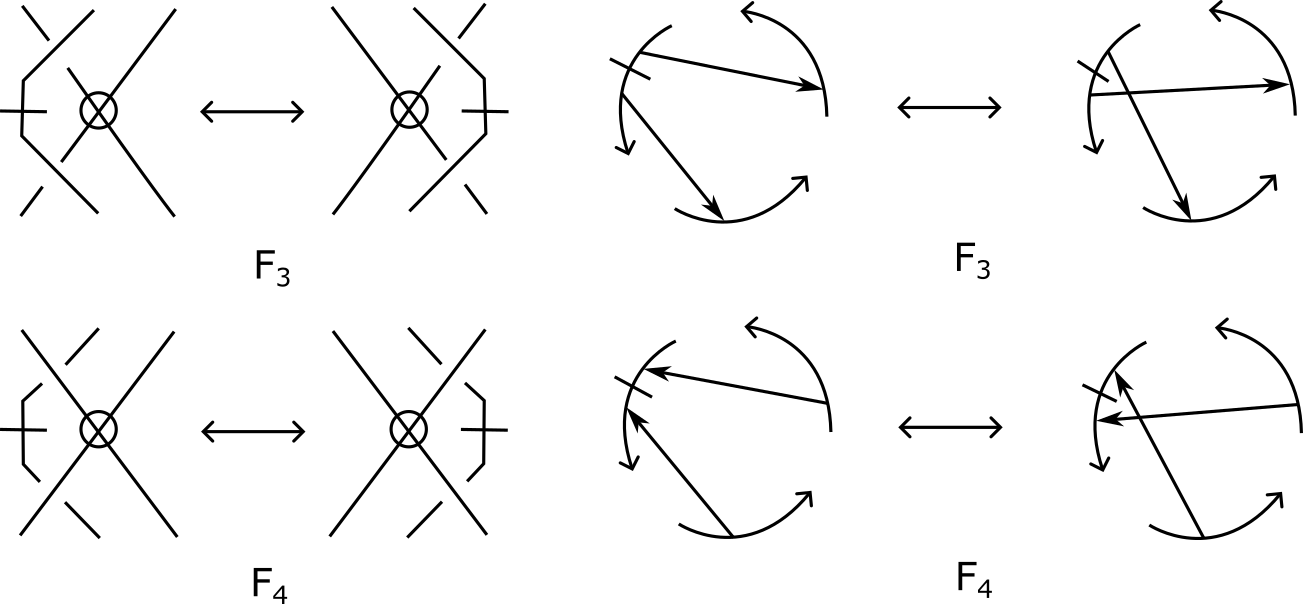}} 
	\caption{Forbidden Moves of twisted knots and the corresponding Gauss diagrams.}
	\label{fig:forbidden1}
\end{figure} 
\begin{figure}[h!]
	\centering
	\includegraphics[width=0.7\textwidth]{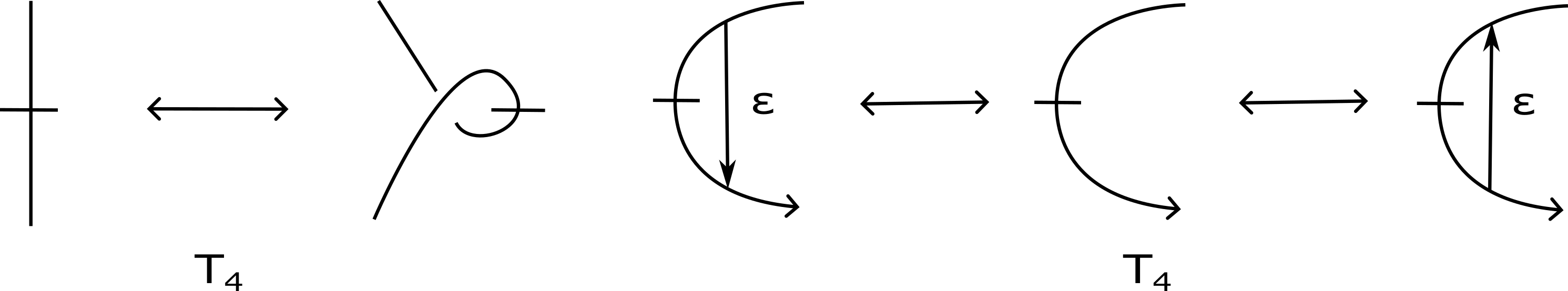}
	\caption{Forbidden move $T_4$.}
	\label{fig:forbidden2}
\end{figure}

	\subsection{Arc shift move and arc shift number of twisted knots}
	Gill, Prabhakar and Vesnin in \cite{Gill2} defined an unknotting operation for virtual knots called {\it arc shift move}, which was later generalized for twisted knots by Negi and Prabhakar \cite{Negi}. In this section, we summerize the definitions of arc shift move, arc shift number and results regarding this.
	\begin{definition}
		An arc in a twisted knot diagram, say $(a, b)$ is defined to be the segment
		passing through exactly one pair of crossings (classical/virtual) $(c_1, c_2)$ with $a$ incident
		to $c_1$ and $b$ incident to $c_2$. Between these crossing points, bars may or may
		not appear. In Fig.~\ref{fig:arc}, arcs $(a, b)$, and $(c, d)$ passes through crossings $(c_{1},c_{2})$.
	\end{definition}
	\begin{figure}
		\centering
		\includegraphics[width=0.16\textwidth]{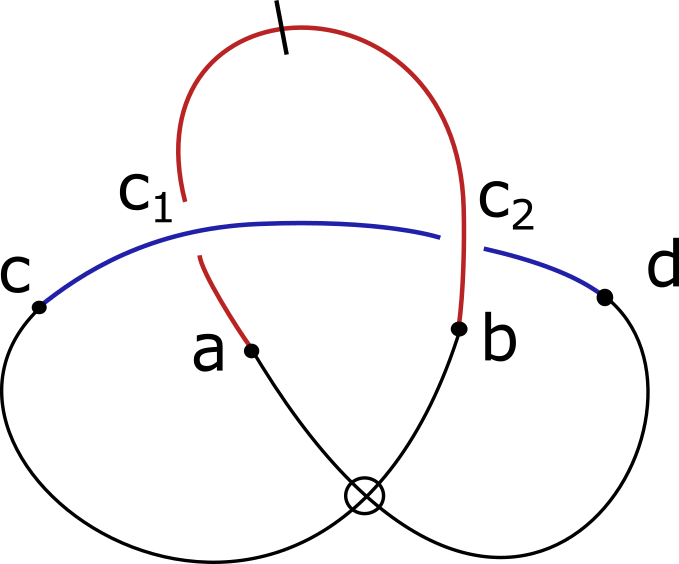}
		\caption{Arc $(a,b)$ and arc $(c,d)$.}
		\label{fig:arc}
	\end{figure}
	\begin{figure}[h!]
		\centering
	\includegraphics[width=0.43\textwidth,height=0.15\textwidth]{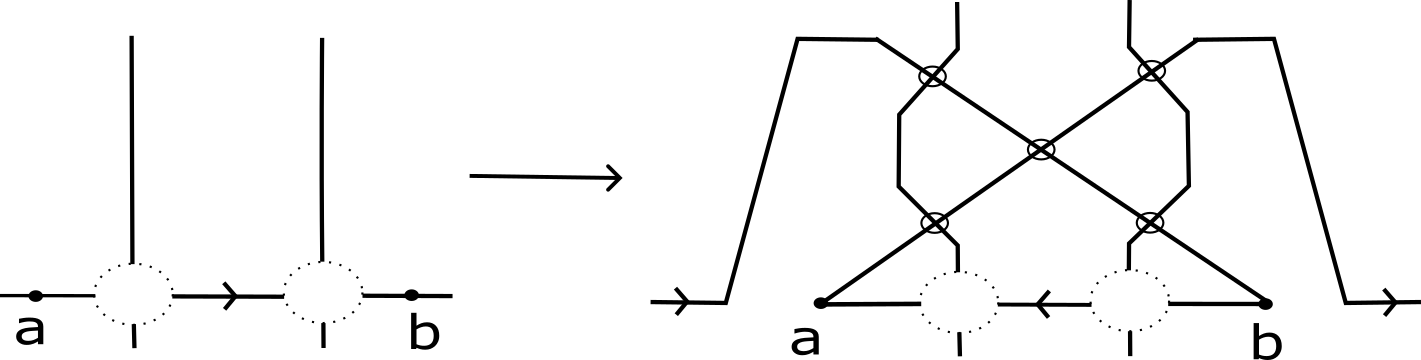}
		\caption{Arc shift move on the arc $(a,b)$.}
		\label{fig:arc move}
	\end{figure} 
		\begin{figure}[h!]
		\centering
		\subfloat{	\includegraphics[width=0.41\textwidth]{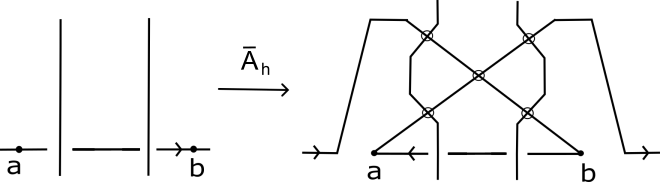}}
		\qquad
		\subfloat{	\includegraphics[width=0.41\textwidth]{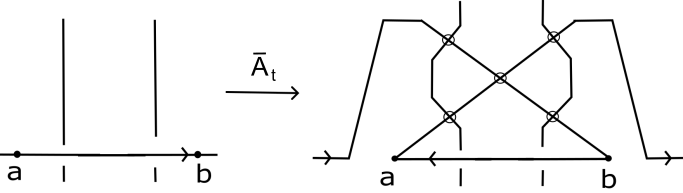}}
		\newline
		\subfloat{	\includegraphics[width=0.41\textwidth]{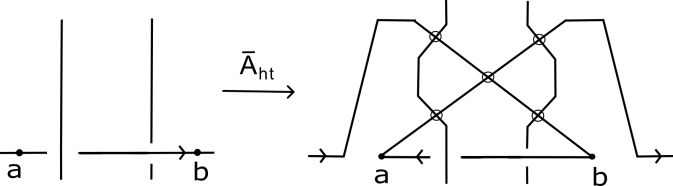}}
		\qquad
		\subfloat{	\includegraphics[width=0.41\textwidth]{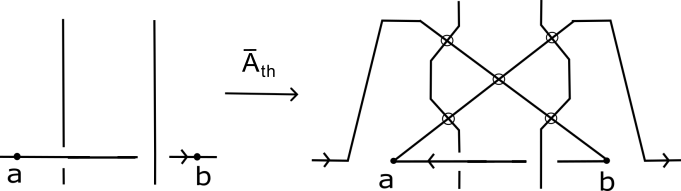}}
		\newline
		\subfloat{	\includegraphics[width=0.41\textwidth]{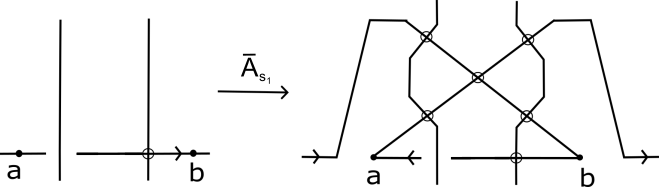}}
		\qquad
		\subfloat{	\includegraphics[width=0.41\textwidth]{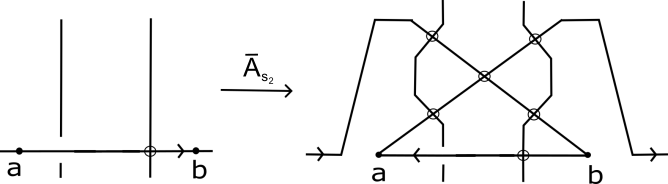}}
		\caption{All Type 1 arc shift moves.}
		\label{fig:type1}
	\end{figure}
	\begin{figure}[h!]
		\centering
		\subfloat{	\includegraphics[width=0.41\textwidth]{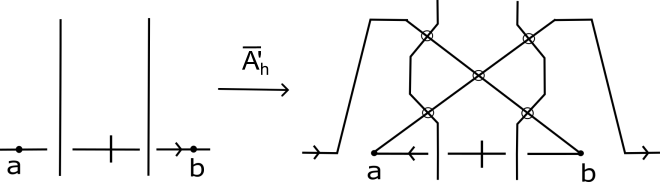}}
		\qquad
		\subfloat{	\includegraphics[width=0.41\textwidth]{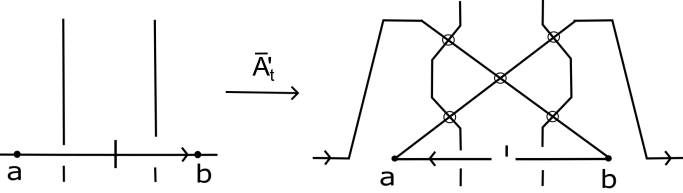}}
		\newline
		\subfloat{	\includegraphics[width=0.41\textwidth]{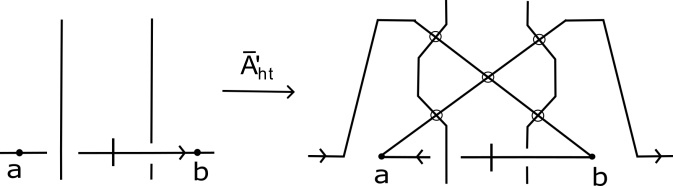}}
		\qquad
		\subfloat{	\includegraphics[width=0.41\textwidth]{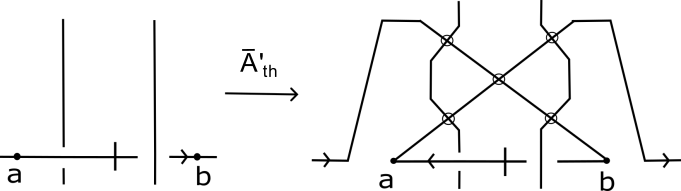}}
		\newline
		\subfloat{	\includegraphics[width=0.41\textwidth]{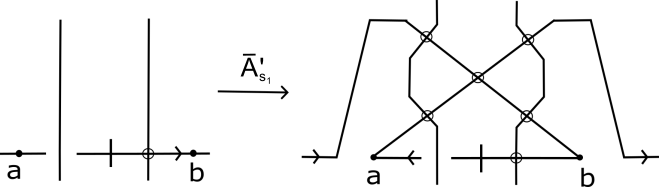}}
		\qquad
		\subfloat{	\includegraphics[width=0.41\textwidth]{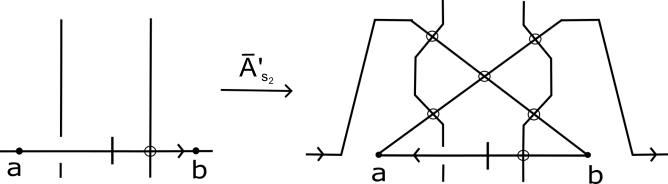}}
		\caption{All Type 2 arc shift moves.}
		\label{fig:type2}
	\end{figure}

	\begin{definition} 
		In a twisted knot diagram $D$, let $(a, b)$ be an arc passing through the pair of crossings $(c_{1},c_{2})$. The dotted circles \big(\includegraphics[height=4ex,valign=m]{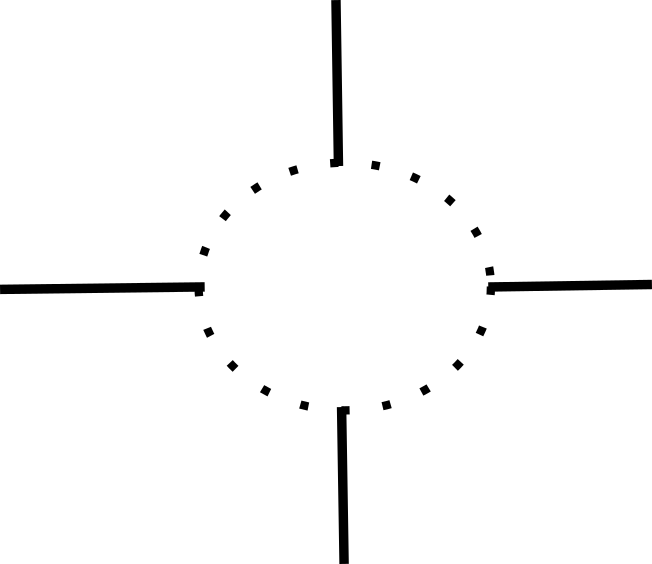}\big) in Fig.~\ref{fig:arc move} represent the fact that the crossings can be classical or virtual.  To apply an arc shift move on the arc
		$(a, b)$, we cut the arc at two points near $a$ and $b$ and identify the loose ends on one side with loose ends on the other side in the way as
		shown in Fig.~\ref{fig:arc move}. We label all the new crossings that arise while applying this move as virtual crossings.
	\end{definition}
	\begin{remark}
		\begin{itemize}
			\item[1.]Depending on the existence of a bar on the arc, there are two types of arc shift move, called Type 1 and Type 2. Also, the choice of $c_1$ and $c_2$ being virtual or classical gives various local configurations of arc shift move. In Fig.~\ref{fig:type2}, all Type 2 moves are shown. For Type 1 arc shift moves, refer to \cite{Gill2}.
			\item[2.] Notice, after applying arc shift move on an arc, the order and signs of the crossings $c_1$ and $c_2$ are altered.
		\end{itemize}
		
	\end{remark}
	\begin{theorem}\cite{Negi}
		Every twisted knot diagram D can be transformed into a trivial
		twisted knot diagram using arc shift moves and extended Reidemeister moves.
	\end{theorem}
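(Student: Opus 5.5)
The plan is to reduce to Theorem~\ref{th:for}. I will show that every forbidden move ($F_1$ or $F_2$, $F_3$ or $F_4$, and $T_4$) can be realized, up to extended Reidemeister moves, by a finite sequence of arc shift moves. Granting this, start from a twisted knot diagram $D$ and pass to its Gauss diagram. Theorem~\ref{th:for} supplies a finite sequence of moves of types $R_1,R_2,R_3,T_2,T_3$ together with forbidden moves $T_4$, $F_1$ (or $F_2$) and $F_3$ (or $F_4$) that ends at the Gauss diagram of a trivial knot, possibly with one bar. The moves $R_i,T_2,T_3$ are already extended Reidemeister moves. The virtual moves $V_1$--$V_4$ (and $T_1$) are invisible at the Gauss diagram level, so any diagram realizing a given Gauss diagram can be reached from any other realizing it. Hence it suffices to realize each forbidden move diagrammatically by arc shift moves combined with extended Reidemeister moves.

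\emph{Step 1: the forbidden moves $F_1/F_2$ and $F_3/F_4$.} These involve only classical and virtual crossings and no bars. I would follow the argument of Gill, Prabhakar and Vesnin \cite{Gill2} for virtual knots. An arc shift move on an arc $(a,b)$ with endpoints at classical crossings $c_1,c_2$ interchanges the order in which $c_1$ and $c_2$ are met along the Gauss diagram and alters their signs. The forbidden move $F_1$ likewise exchanges two adjacent endpoints of chords in the Gauss diagram. I therefore expect to realize $F_1$ by applying an arc shift on the short arc between the two crossings involved. The sign alteration would then be undone either by a second arc shift or by an $R_2$/virtual detour. $F_3$/$F_4$ should be handled similarly, using the configurations in Fig.~\ref{fig:type1}.

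\emph{Step 2: the twisted forbidden move $T_4$.} Judging from Fig.~\ref{fig:forbidden2}, this move slides a bar past a classical crossing, changing that crossing's configuration. Here I would use the Type 2 arc shift moves of Fig.~\ref{fig:type2}, where the arc carries a bar. Shifting an arc with a bar moves the bar relative to the endpoints $c_1,c_2$. I would then use $T_2,T_3$ and virtual moves to bring the result into the form of the right-hand side of $T_4$.

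I expect this step to be the main obstacle. It requires checking the bar positions and crossing-sign bookkeeping across the Type 2 local configurations. If a single Type 2 move does not suffice, I would compose two arc shifts: a Type 1 move to reorder the crossings and a Type 2 move to transport the bar. After realizing all forbidden moves, the remaining issue is the final diagram. It is trivial by Theorem~\ref{th:for}, being an unknot with at most one bar, and such diagrams are trivial twisted knots by definition. This completes the proof.
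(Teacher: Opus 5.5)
Your reduction to Theorem~\ref{th:for} is sound. The paper only quotes this statement from \cite{Negi}, but it uses the same reduction in Section~\ref{sec:ras}: Propositions~\ref{pr:F2}, \ref{pr:F3}, \ref{pr:T4} realize each forbidden move by one region arc shift move, which is by definition a composite of arc shift moves. The gap is that your proposal never supplies these realizations, which are the whole content of the proof, and the sketches you give would fail. For $F_1$, one arc shift on the arc joining $c_1$ and $c_2$ does swap their adjacent endpoints. However, it reverses the passage through both crossings and so flips both signs. Neither of your repairs works. A second shift on the same arc swaps the endpoints back, and an $R_2$ move or virtual detour never changes the sign of an existing chord. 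The missing idea is to shift arcs with a \emph{virtual} endpoint. In the $F_1$ picture, the other strands through $c_1$ and $c_2$ meet at a virtual crossing $v$. A shift on the arc from $c_i$ to $v$ reverses only the passage through $c_i$ and restores that one sign. So $F_1$ needs three arc shifts, one on each boundary arc of the triangular region.

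Your descriptions of the other moves are also wrong. $F_3/F_4$ are not bar-free: the arc between the two classical crossings carries a bar. This is what distinguishes them from $F_1/F_2$, and it is why $p^{*}(c_i)$ changes in Proposition~\ref{pr:for3}. The shift on that arc must therefore be the Type 2 move $\bar{A}'_t$, together with two Type 1 shifts on the classical--virtual arcs; Type 1 configurations alone cannot do it. $T_4$ is not a bar being slid through a crossing. It deletes a curl whose loop carries a bar, i.e.\ an isolated chord with a bar between its endpoints. That loop meets only the one crossing $c$, so you first add a virtual crossing on it by $V_1$. Then you shift the two resulting boundary arcs, one of Type 1 and one of Type 2. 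Each shift flips the sign of $c$ once, so the sign is restored, and the Type 2 shift carries the bar past one endpoint of $c$. The chord is then isolated with no bar between its endpoints, and $R_1$ removes it. Until you supply these three explicit realizations, the argument is only an outline with its key step unproved.
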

	\begin{definition}\cite{Negi}
		For any given twisted knot $K$, its arc
		shift number denoted as $A(K)$ is determined as the minimum value of the set
		$\{A(D)|D \; \text{represents}\; K\}$, where $A(D)$ signifies the minimum number of arc shift
		moves required to transform a diagram D into a trivial twisted knot.
	\end{definition}
	\begin{theorem}\cite{Negi}
		Arc shift number is an invariant for twisted knots.
	\end{theorem}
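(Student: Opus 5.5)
The plan is to check that $A(K)$, defined as a minimum over all diagrams representing $K$, is well defined. Once that is done, invariance follows directly from the definition.

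The first step is to show that $A(D)$ is finite for every twisted knot diagram $D$. By the preceding theorem of \cite{Negi}, every diagram $D$ can be turned into a trivial twisted knot diagram by a finite sequence of arc shift moves and extended Reidemeister moves. So the set of numbers of arc shift moves that suffice to trivialize $D$, allowing extended Reidemeister moves in between, is a nonempty subset of $\mathbb{N}\cup\{0\}$. By well-ordering it has a minimum, and we call this minimum $A(D)$. In particular the set $\{A(D)\mid D \text{ represents } K\}$ is a nonempty set of nonnegative integers, so its minimum $A(K)$ exists.

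The second step is invariance. Suppose $K$ and $K'$ are equivalent twisted knots, i.e. their diagrams are related by the moves $R_1,R_2,R_3,V_1,\dots,V_4,T_1,T_2,T_3$. Equivalence is defined through diagrams, so a diagram $D$ represents $K$ exactly when it represents $K'$. The sets $\{A(D)\mid D \text{ represents } K\}$ and $\{A(D)\mid D \text{ represents } K'\}$ are therefore literally the same set, and their minima agree: $A(K)=A(K')$.

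If instead $A(D)$ is read as counting arc shift moves applied to $D$ with no Reidemeister moves allowed, the same argument still works. Minimizing over all diagrams of $K$ absorbs any Reidemeister moves: a trivializing sequence on some diagram $D'$ equivalent to $D$ is counted in the minimum defining $A(K)$. The only genuine obstacle is finiteness of $A(D)$, that is, that arc shift moves form an unknotting operation, and this is supplied by the cited theorem. I also need to use that the trivial twisted knot, with or without a bar as in the Remark, is an allowed endpoint in both readings, so that both readings count against the same target set.
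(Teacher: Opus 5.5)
Your argument is correct and is the same definitional argument that underlies the cited result. The paper gives no proof of its own and only cites \cite{Negi}; there, invariance is immediate because $A(K)$ is a minimum over all diagrams of $K$, and the unknotting theorem guarantees that this minimum exists.

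One caveat concerns your last paragraph. Under the stricter reading, minimizing over starting diagrams absorbs only Reidemeister moves made before the first arc shift or after the last one, not moves interleaved between arc shifts. So finiteness under that reading is not directly supplied by the cited theorem and would need an extra argument commuting Reidemeister moves past arc shifts. Invariance itself is unaffected, provided $A(K)$ is allowed to take the value $\infty$.
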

	\subsection{Odd writhe of a twisted knot}
	For a classical crossing $c$ in a twisted knot diagram $D$, the index of $c$, denoted by $ind(c)$, is defined by the  number of classical
	crossings while traversing along the diagram on one full path that starts at $c$ and returns
	to $c$ \cite{Kau2}. A classical crossing $c$ is said to be an {\it odd crossing} if  $ind(c)$ is odd. Let $Odd(D)$ be the set of all odd crossings in $D$.
	
	\begin{definition}\cite{Negi}
		The odd writhe number $J (D)$ of $D$ is equal to the sum of the signs of all
		the odd crossings in D, i.e.,
		\[J (D) = \sum_{c\in Odd(D)} sgn(c).\]
	\end{definition}
	
	\begin{theorem}\cite{Negi}
		The odd writhe $J(K)$ is an invariant for the twisted knot $K$.
	\end{theorem}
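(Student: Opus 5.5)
To prove that the odd writhe $J$ is an invariant of twisted knots, I would show that $J(D)$ is unchanged under each extended Reidemeister move $R_1,R_2,R_3,V_1,V_2,V_3,V_4,T_1,T_2,T_3$ of Fig.~\ref{fig:moves}. The first step is to read the index combinatorially. In the Gauss diagram of $D$, a classical crossing $c$ is a chord. Its index $ind(c)$ is the number of chord endpoints on one of the two arcs into which the chord $c$ cuts the circle. The total number of endpoints other than those of $c$ is even, so the parity of $ind(c)$ does not depend on which of the two paths from $c$ back to $c$ we take. Hence oddness of $c$ is a property of the Gauss diagram alone; in particular, it does not depend on the bars or on the virtual crossings.

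Next I would treat the moves that do not change the set of classical crossings. These are $V_1$--$V_4$ and $T_1$--$T_3$, which only rearrange virtual crossings and bars. Under such a move the underlying Gauss chords and their endpoint order stay the same. The one point to watch is that $T$-moves may change the local picture of a classical crossing near bars, as in $T_3$, where a crossing passes through a pair of bars. I would check from the figure that $T_3$ keeps the sign of the crossing: a bar reverses orientation on both strands and swaps over/under, so the sign is preserved. I would also check that $T_3$ preserves the endpoint order up to the symmetry of the chord, so the parity is preserved. Granting this, $J$ is unchanged by these moves.

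Then I would handle the classical moves using the parity description.
\begin{itemize}
\item For $R_1$: the new crossing has index $0$, which is even, so it does not contribute to $J$. Its two endpoints are adjacent on the circle, so they add $0$ or $2$ to the index of every other chord, and no parity changes.
\item For $R_2$: the two new crossings $c,c'$ have opposite signs. Their endpoints are arranged so that each arc cut by $c$ contains the same number of endpoints as the corresponding arc cut by $c'$, up to one endpoint from each of $c$ and $c'$ themselves, which cancel in parity. So $c$ and $c'$ have equal parity, and their contributions to $J$ cancel. Every other chord gains $0$ or $2$ endpoints from the pair, so its parity is unchanged.
\item For $R_3$: the three crossings involved keep their signs, and each of them has one neighbouring endpoint swapped past an endpoint of another of the three. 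This swap leaves each index parity unchanged, and all other chords are unaffected.
\end{itemize}

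The step I expect to be the main obstacle is the careful bookkeeping for the twisted moves, especially $T_3$ and any bar-related sign conventions. I would settle it by drawing the Gauss diagrams before and after each move, using the bar-recording convention of Fig.~\ref{fig:gausscode}, and confirming that the sign and the endpoint-parity data of every classical chord are preserved.
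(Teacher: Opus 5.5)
The paper gives no proof of this statement; it is quoted from Negi--Prabhakar. Your strategy is the standard one and it works: read $ind(c)$ mod $2$ off the Gauss diagram, then check every extended Reidemeister move. Two of your justifications, however, are wrong as written, and one of them sits at the only step that is genuinely twisted.

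\textbf{The $R_3$ step.} A single transposition of adjacent endpoints of two distinct chords changes the index of each of them by exactly $\pm 1$. So it \emph{flips} parity, and your sentence ``this swap leaves each index parity unchanged'' is false. What actually happens is this:
\begin{itemize}
\item $R_3$ reverses the order of the two crossings on each of the three strands, so there are three swaps.
\item Each of the three chords takes part in exactly two of them, one at each of its endpoints, once with each of the other two chords.
\item Hence each index changes by $-2$, $0$ or $2$, and the parity is preserved.
\end{itemize}

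\textbf{The $T_3$ step.} This is the only move not already covered by Kauffman's virtual argument. You leave it as ``granting this,'' and the heuristic you give points the wrong way. Bars do not change the orientation of the knot. Moreover, reversing both strands and swapping over/under would \emph{negate} the sign, not preserve it. The description ``a crossing passes through a pair of bars'' is also not what $T_3$ is.

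The move $T_3$ replaces a classical crossing carrying a bar on each of its four incident edges by a classical crossing flanked by two virtual crossings.
\begin{itemize}
\item The middle crossing has the same planar picture as the original one, so it has the same sign.
\item The two virtual crossings exchange which strand of the knot passes over.
\item In the Gauss diagram, the chord keeps both endpoint positions and its sign. Only its arrow is reversed, and the adjacent bars disappear.
\item Since $ind(c)$ counts only classical crossings, it is unchanged, and $J$ is preserved.
\end{itemize}

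With these two corrections your argument is complete.
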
 
	\begin{lemma}\cite{Negi}\label{le:odd}
		Odd writhe $J(K)$ forms a lower bound for arc shift number $A(K)$ of twisted knot $K$.
		\[ \Big| \frac{J(K)}
		{2} \Big|
		\leq A(K).\]
		
	\end{lemma}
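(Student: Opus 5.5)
The plan is to show that a single arc shift move changes the odd writhe of a diagram by at most $2$. Combined with the invariance of $J$ and the fact that a trivial twisted knot has $J=0$, this gives the bound.

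Let $D$ be a diagram of $K$ with $A(D)=A(K)=m$. Choose a sequence $D=D_0\to D_1\to\cdots\to D_m$, where each step is one arc shift move followed by extended Reidemeister moves, and $D_m$ is a trivial twisted knot diagram. The extended Reidemeister moves $R_i,V_i,T_i$ preserve $J$, since $J$ is an invariant. A trivial twisted knot (an unknot with or without a bar) has no classical crossings, so $J(D_m)=0$. It therefore suffices to prove
\[ |J(D')-J(D)|\le 2 \]
whenever $D'$ is obtained from $D$ by a single arc shift move. Telescoping then gives $|J(K)|=|J(D_0)-J(D_m)|\le 2m$, which is the claim.

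For the one-move estimate, fix the arc $(a,b)$ through the crossings $(c_1,c_2)$, and compare Gauss diagrams before and after the move. All new crossings created by the move are virtual, so they do not affect $J$. The classical crossings of $D'$ are in bijection with those of $D$. The move only reverses the order in which the traversal passes the endpoints $a$ and $b$ (the ends at $c_1$ and $c_2$); in the Type 2 case it also changes the bar pattern, and bars do not enter the index at all.

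The key step is to track how the index of each classical crossing $c$ changes. This depends on how many of the two swapped endpoints lie on each of the two paths from $c$ back to itself.
\begin{itemize}
\item If $c\ne c_1,c_2$, the swap of two adjacent endpoints either leaves both inside the same path, so $ind(c)$ is unchanged, or the endpoints are adjacent to a point of $c$ itself. Because the arc passes through exactly one pair of crossings, no other crossing's endpoint lies between $a$ and $b$. Hence the parity of $ind(c)$ and the sign of $c$ are preserved.
\item For $c_1$ and $c_2$ themselves (when classical), the move alters their signs and shifts the index by $\pm1$. Each can therefore change its contribution to $J$ by at most $1$ in absolute value, which gives a total change of at most $2$.
\end{itemize}

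The main obstacle is making this case check rigorous across all local configurations in Fig.~\ref{fig:type1} and Fig.~\ref{fig:type2}: $c_1$ and $c_2$ each classical or virtual, and each of the two move types. I would handle this uniformly via Gauss diagrams, viewing the move as an exchange of two adjacent endpoints of chords $c_1$ and $c_2$ together with a sign reversal. Then the effect on parity is read off directly from which arcs of the circle the endpoints move between.
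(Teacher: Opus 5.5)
Your strategy is the standard one, and the bound $|J(D')-J(D)|\le 2$ is true: show that one arc shift move changes $J$ by at most $2$, then telescope using invariance of $J$ and $J=0$ for the trivial knot. The paper gives no proof of this lemma and only cites Negi and Prabhakar. However, one step of your case analysis is wrong as stated.

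You claim that a classical $c_1$ or $c_2$ has its index shifted by $\pm1$, so its contribution to $J$ changes by at most $1$. That holds only when \emph{both} $c_1$ and $c_2$ are classical. In that case two adjacent endpoints are exchanged, so each of $c_1,c_2$ gains or loses exactly one interlacing chord (the other one). Hence both parities flip and both signs flip. In fact $J(D')-J(D)=-(\varepsilon_1+\varepsilon_2)$ exactly, where $\varepsilon_i=sgn(c_i)$ before the move.

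If exactly one of them, say $c_2$, is classical, the reversed segment contains a single chord endpoint, so nothing is exchanged in the Gauss diagram. Then $ind(c_2)$ is unchanged and only its sign flips. When $c_2$ is odd, its contribution goes from $\varepsilon_2$ to $-\varepsilon_2$, a change of $2$. This contradicts your per-crossing bound of $1$, although the total change is still at most $2$ because only one classical crossing is involved. If both are virtual, the Gauss diagram does not change.

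So your uniform description of the move as ``an exchange of two adjacent endpoints of chords $c_1$ and $c_2$ together with a sign reversal'' needs to become a three-way case split by how many of $c_1,c_2$ are classical. With that correction the proof goes through.

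Your treatment of crossings $c\neq c_1,c_2$ is fine. The second alternative you list there is vacuous: no endpoint lies between the two swapped ones, so they always lie on the same side of $c$.
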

	
	\subsection{A polynomial invariant based on Affine Indices for the twisted knots}\label{sec:polyinv}
	In this section, we briefly describe a polynomial invariant $Q(s,t)$ of a twisted link \cite{Naoko1} which is defined using the affine indices of the classical crossings of a twisted knot diagram. We use this polynomial to distinguish the knots in each family of twisted knots in Section~\ref{sec:fam}. Moreover, we find lower bound of the forbidden number in Section~\ref{sec:rasno} by observing the changes in $Q(s,t)$ under each forbidden moves $F_1$(or $F_2$), $F_3$(or $F_4$) and $T_4$.
	
	For a twisted link diagram $D$, let $SelfX(D)$ be the set of classical self-crossings of $D$ and $D_c$ be the twisted link diagram obtained from $D$ by smoothing at $c$ for $c \in Self X(D)$. 
	The over-counting component $D_c^O$ and the under-counting component $D_c^U$ for $D$ at $c$ are defined as in Fig.~\ref{fig:component}. 
	
	\begin{figure}[h!]
		\centering
		\subfloat[At a positive crossing]{\includegraphics[width=0.4\textwidth]{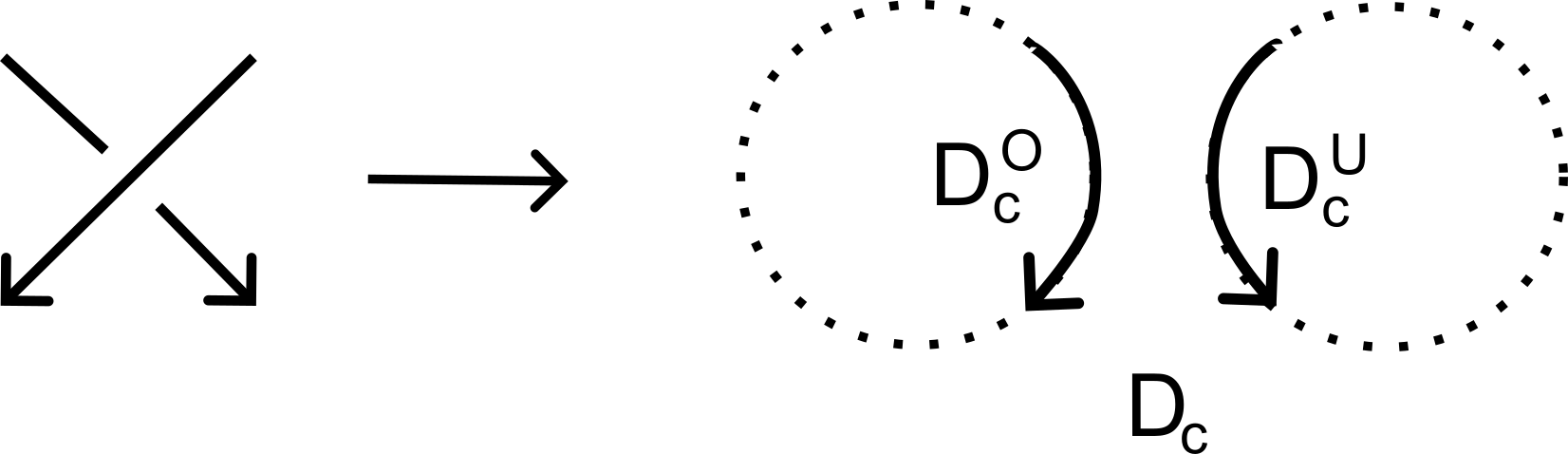}}
		\quad \quad
		\subfloat[At a negative crossing]{\includegraphics[width=0.4\textwidth]{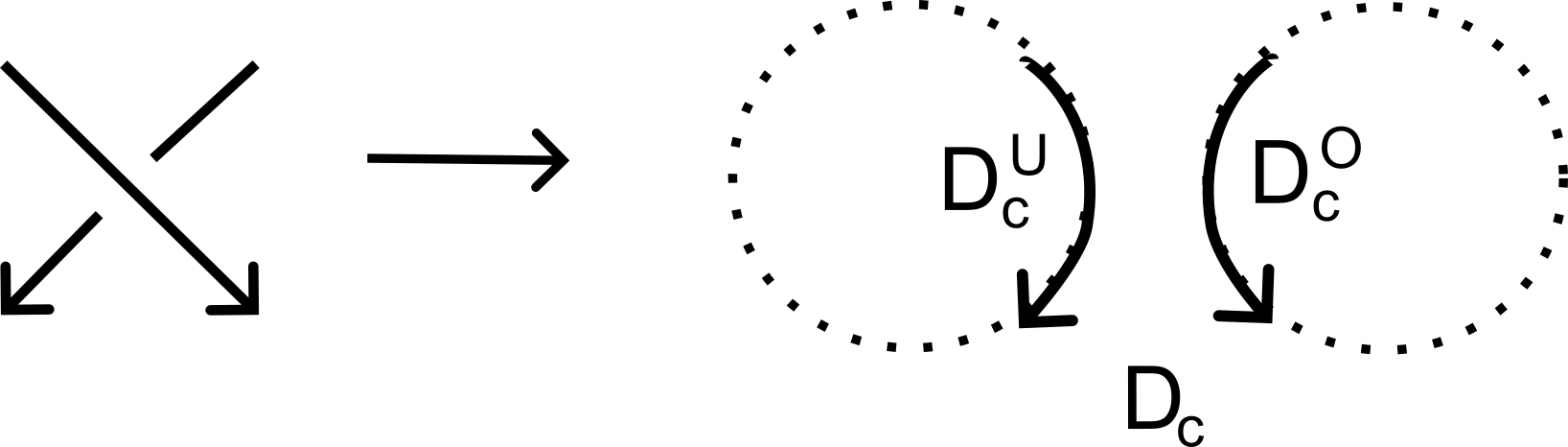}} 
		\caption{Counting components of $D$ at $c$.}
		\label{fig:component}
	\end{figure} 
	
	Let $\gamma_1, \gamma_2, \cdots,\gamma_k$ be the sequence of classical crossings of $D_{c}^{*}  (*\in {O,U
	})$ which appears on the path while transversing the diagram $D_{c}^{*}  (*\in {O,U
	})$ from a point near the crossing $c$, then back to the point. Here if there is a self-crossing of $D_{c}^{*}$, then it appears twice in the sequence $\gamma_1, \gamma_2, \cdots,\gamma_k$.
	Define the $\text{flat sign}$ at $\gamma_i$, denoted by $i(\gamma_i, D_c^*)$, as follows.
	\[i(\gamma_i, D_c^*) = \begin{cases}
		
		\; \; \, sgn(\gamma_i), \quad \text{if} \, \gamma_i \, \text{is an under crossing,}\\
		-sgn(\gamma_i), \quad \text{if} \, \gamma_i \, \text{is an over crossing.}
		
	\end{cases}\]
	Then, the over and under affine index is defined by 
	\[ind^*(c,D) = \sum_i i(\gamma_i, D_c^*) \qquad (*\in {O,U
	})\]
	For $*\in \{O, U\}$, define \[  \bar{\rho}^*(c) = \begin{cases}
		1, \quad  \text{if} \; ind*(c)\; \text{is odd} \\
		0, \quad  \text{if} \; ind*(c)\; \text{is even}
	\end{cases}    \]
	\[  p^*(c) = \begin{cases}
		1, \quad  \text{if the number of bars on} \;  D^*_c \; \text{is odd} \\
		0, \quad  \text{if  the number of bars on}\; D^*_c \;  \text{is even}
	\end{cases}    \]
	\begin{definition}\cite{Naoko1}\label{def:Q}
		For a twisted link diagram $D$, we define a polynomial $Q_D(s,t)$ by 
		\[ Q_D(s,t) = \sum_{c \in Self(D)} sgn(c) (s^{\bar{\rho}^{O}(c)} t^{p^{O}(c)} -1) (s^{\bar{\rho}^{U}(c)} t^{p^{U}(c)} -1)\]
	\end{definition}
	\begin{theorem}\cite{Naoko1}
		The polynomial $Q_D(s,t)$ is an invariant of a twisted link.
	\end{theorem}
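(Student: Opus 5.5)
The statement is the theorem of \cite{Naoko1} that $Q_D(s,t)$ is an invariant of twisted links. The plan is to show that $Q_D$ is unchanged under each extended Reidemeister move $R_1,R_2,R_3,V_1,\dots,V_4,T_1,T_2,T_3$ of Fig.~\ref{fig:moves}.

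The key observation is that each summand of $Q_D$ depends only on four pieces of data at a self-crossing $c$. These are the sign of $c$, the parities of $ind^O(c,D)$ and $ind^U(c,D)$, and the parities of the number of bars on $D_c^O$ and $D_c^U$. So for every move I need to check two things:
\begin{itemize}
\item[(i)] each self-crossing $c$ surviving the move keeps its sign and these four parities;
\item[(ii)] the contribution of any crossings created or destroyed by the move sums to zero.
\end{itemize}

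First I handle the moves that create or destroy no classical crossings: $V_1$--$V_4$ and $T_1$--$T_3$. Virtual crossings do not enter the affine index at all. Smoothing at $c$ commutes with these local moves, so $D_c^*$ changes by the same local move.
\begin{itemize}
\item Moves $V_1$--$V_4$ preserve the sequence of classical crossings and all bars.
\item Moves $T_1$--$T_3$ add or remove bars only in pairs on a single strand. For $T_3$, which moves bars past a virtual crossing, each strand's bar count changes by an even number. Hence every $p^*$ is preserved.
\item The one delicate point is $T_2$ (a bar passing through a classical crossing). It flips the crossing's over/under information and its sign in a compatible way. I would check that at a crossing $\gamma$ on $D_c^*$ both $sgn(\gamma)$ and the over/under role flip, so the flat sign $i(\gamma,D_c^*)$ is unchanged. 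If $\gamma=c$ itself, I would check that the roles of $D_c^O$ and $D_c^U$ are preserved, or swapped symmetrically. Since the summand is symmetric in $O$ and $U$, a swap is harmless. The bars added on the two strands at $c$ land one on each side of the smoothing, or both on the same component; I would verify the parities case by case.
\end{itemize}

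Next the classical moves. For $R_1$, the new crossing $c$ has one counting component that is a small trivial loop with no crossings and no bars. That factor is $s^0t^0-1=0$, so $c$ contributes nothing. For every other crossing, the kink adds no net flat-sign contribution to any path, so nothing else changes.

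For $R_2$, the two new crossings $c_1,c_2$ have opposite signs. When they are self-crossings, their smoothings give counting components with the same crossing and bar data up to local modification. I would check that their flat-sign sums agree modulo $2$, so the two terms cancel. For other crossings, a path through the bigon meets $c_1$ and $c_2$ with opposite flat signs, so the index is unchanged.

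For $R_3$, there is a natural bijection of crossings preserving signs. A path through the triangle picks up the same multiset of flat signs before and after, and the bars lie outside the disk.

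The main obstacle is the $T_2$ case at the crossing $c$ itself, and the $R_2$ cancellation. There one must track carefully how the over/under counting components are defined in Fig.~\ref{fig:component} relative to orientation and sign. My first attempt would be to work in Gauss diagrams, where $T_2$ acts as a chord flip together with bars added at the chord's endpoints. In that setting the parity computations reduce to counting endpoints and bars on the arcs of the circle determined by the chord.
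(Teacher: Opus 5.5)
The paper does not prove this statement; it is quoted from \cite{Naoko1}. Your move-by-move verification is the natural route. Your treatment of $R_1$, $R_2$, $R_3$, $V_1$--$V_4$ and of the moves that only slide or cancel bars is fine. However, the step you single out as delicate rests on a wrong description of the twisted move through a classical crossing (the one you call $T_2$).

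That move does \emph{not} change the sign of the crossing. It exchanges which strand passes over while keeping $sgn$; in the Gauss diagram, the chord is reversed with its sign kept. It also changes the bar parity on each of the four arcs incident to the crossing. You can check this against the paper itself. If the move reversed the sign, applying it at an odd crossing would leave the index parity unchanged but change the odd writhe $J$ by $\pm 2$. That contradicts the invariance of $J$ quoted from \cite{Negi}.

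Your model also breaks your own check at $\gamma=c$. There you only argue that the unordered pair $\{(\bar\rho^O(c),p^O(c)),(\bar\rho^U(c),p^U(c))\}$ survives. So if $sgn(c)$ flipped, the summand of $c$ would change sign and $Q_D$ would not be invariant.

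With the correct description, your claim that the flat sign $i(\gamma,D_c^*)$ is unchanged for $\gamma\neq c$ is false.
\begin{itemize}
\item If $\gamma$ is a crossing between $D_c^O$ and $D_c^U$, each counting component passes $\gamma$ once. Since the over/under role is exchanged while $sgn(\gamma)$ is kept, $i(\gamma,D_c^*)$ is replaced by its negative, and $ind^*(c,D)$ changes by $\pm 2$.
\item If $\gamma$ is a self-crossing of $D_c^*$, its two passages contribute $0$ both before and after the move.
\end{itemize}
So the affine index itself is not preserved; only its parity is. This is exactly why $Q_D$ is built from $\bar\rho^*(c)$ rather than from $ind^*(c,D)$, and your argument as written misses that point.

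The bar parities do survive. A component passing $\gamma$ once contains one incoming and one outgoing arc at $\gamma$, and a component passing it twice contains all four, so the parity changes come in pairs.

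At $\gamma=c$, the sign is kept and the two components of the smoothing are the same curves. Each contains two of the four modified arcs, so its bar parity is preserved. The over/under roles are at most exchanged, carrying $\bar\rho^*$ and $p^*$ with them. The symmetry of the summand under $O\leftrightarrow U$ then finishes the case.

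With this correction your plan goes through. A minor point: your list of moves is internally inconsistent. You first say $T_1$--$T_3$ only add or remove bars in pairs on a single strand, and then treat $T_2$ as a move across a classical crossing.
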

	\begin{remark}\cite{Naoko1}\label{rem:over}
		When $D$ is a twisted knot diagram, $\bar{\rho}^{O}(c) = \bar{\rho}^{U}(c)$. Hence, we may compute one of $\bar{\rho}^{O}(c)$ and $\bar{\rho}^{U}(c)$ for each $c\in SelfX(D)$.
		
	\end{remark}
	
	\section{Twisted knots with arc shift number $n$}\label{sec:fam}
	
	In this section, we answer the question that was mentioned in Section~\ref{sec:intro} about finding families of twisted knots with arc shift number $n$ for any given $n \in \mathbb{N}$. Further, we distinguish the knots in the families using the polynomial invariant $Q(s,t)$ \cite{Naoko1}.

	\begin{theorem}\label{th:Kn}
		For each positive integer $n$, there exists a twisted knot $K_n $ such that the arc shift number of each \(K_n\) is exactly equal to \(n\).
	\end{theorem}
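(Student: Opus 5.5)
The plan is a two-sided bound: the lower bound comes from the odd writhe via Lemma~\ref{le:odd}, and the upper bound from an explicit sequence of arc shift moves. I will build $K_n$ as a connected-sum-like chain of $n$ copies of a small twisted block $B$ with two properties. First, $B$ contains exactly two odd classical crossings, both of the same sign, say positive. Second, a single arc shift move on the arc joining these two crossings turns $B$ into something that simplifies to a trivial strand. A natural candidate is the two-crossing virtual-type block with Gauss code $O1^+ O2^+ U1^+ U2^+$: both crossings have index $1$, so both are odd. A variant carrying bars (so that it is genuinely twisted and not virtual) will be chosen if needed. Chaining $n$ such blocks along one strand gives a diagram $D_n$ with Gauss code the concatenation of $n$ copies of this pattern, using distinct labels.

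For the lower bound, I will check that in $D_n$ each crossing's index is computed inside its own block. Crossings from other blocks appear in pairs on any return path, or not at all, so they contribute an even number. Hence all $2n$ crossings remain odd and have sign $+1$. This gives $J(D_n)=2n$. Since the odd writhe is an invariant, Lemma~\ref{le:odd} yields $A(K_n)\ge |J(K_n)/2| = n$.

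For the upper bound, in each block I will apply one arc shift move to the arc $(a,b)$ running between the two crossings of that block. By Remark~2 following the definition, this reverses the order of the two crossings along the strand and changes their signs. For example, $O1^+O2^+U1^+U2^+$ becomes a code in which, after relabeling, a classical $R_2$ move (or $R_1$ moves) removes both crossings. The new virtual crossings are then removed by virtual and $T$ moves, and any leftover bars are absorbed using $T_2,T_3$.

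Doing this in each of the $n$ blocks leaves an unknot with at most one bar, which is trivial by the earlier Remark. Hence $A(D_n)\le n$, and therefore $A(K_n)=n$.

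The main obstacle is the local verification that a single arc shift really kills one block. The exact order and sign changes of the arc shift depend on the Type 1/Type 2 configuration in Fig.~\ref{fig:type2}, so the block has to be designed so that the post-move Gauss diagram is visibly reducible. I will first try the simplest block above and check it on the Gauss diagram. If the resulting code is not of $R_2$ form, I will adjust the block, for instance by inserting a bar on the shifted arc to use a Type 2 move, or by reversing one crossing's over/under information while keeping both signs positive, until the post-move code reduces. Finally, to make sure the $K_n$ are genuinely distinct twisted knots, and not virtual ones, I will compute $Q(s,t)$ as the paper intends, though distinctness already follows from $J(K_n)=2n$.
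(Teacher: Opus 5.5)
Your proposal is correct and follows the paper's strategy: the odd writhe $J(K_n)=2n$ with Lemma~\ref{le:odd} gives $A(K_n)\ge n$, and one arc shift per block gives $A(K_n)\le n$. The only difference is the building block, and your simplest candidate $O1^+O2^+U1^+U2^+$ needs no adjustment: reversing an arc through two consecutive classical crossings swaps two adjacent chord endpoints, leaving two isolated chords that $R_1$ removes, so your $K_n$ are virtual knots (with $Q_{K_n}=2n(s-1)^2$), whereas the paper's three-crossing block $B_i$ carries bars and makes its $K_n$ genuinely twisted, which the statement does not require.
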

	
	\begin{proof}
	To prove this theorem, we construct a sequence of twisted knots $K_1, K_2, \cdots  , K_n$ by using the local diagram in Fig.~\ref{fig:block1}, as a building block . The $i^{th}$ block $B_i$ contains the classical crossings $c_i,d_i$, and $d'_i$. And each $K_i$ is obtained from $K_{i-1}$ by adding one block for all $2 \leq i \leq n$. Now, consider the twisted knot diagram $K_n$ as shown in Fig.~\ref{fig:diagram1}. We prove that $K_n$ has arc shift number $n$. Let $G(K_n)$ be the Gauss diagram for $K_n$ (Fig.~\ref{fig:gauss}).
		\begin{figure}[h]
		\centering 
		\includegraphics[width=0.26\textwidth]{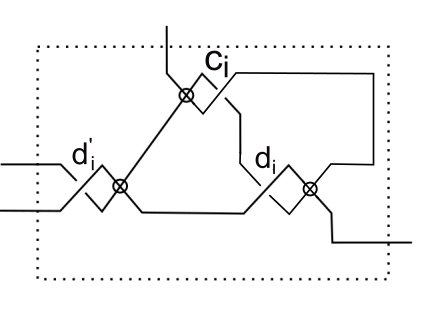}
		\caption{Building block $B_i$.}
		\label{fig:block1}
	\end{figure}
		\begin{figure}[h!]
			\centering 
			\includegraphics[width=0.9\textwidth]{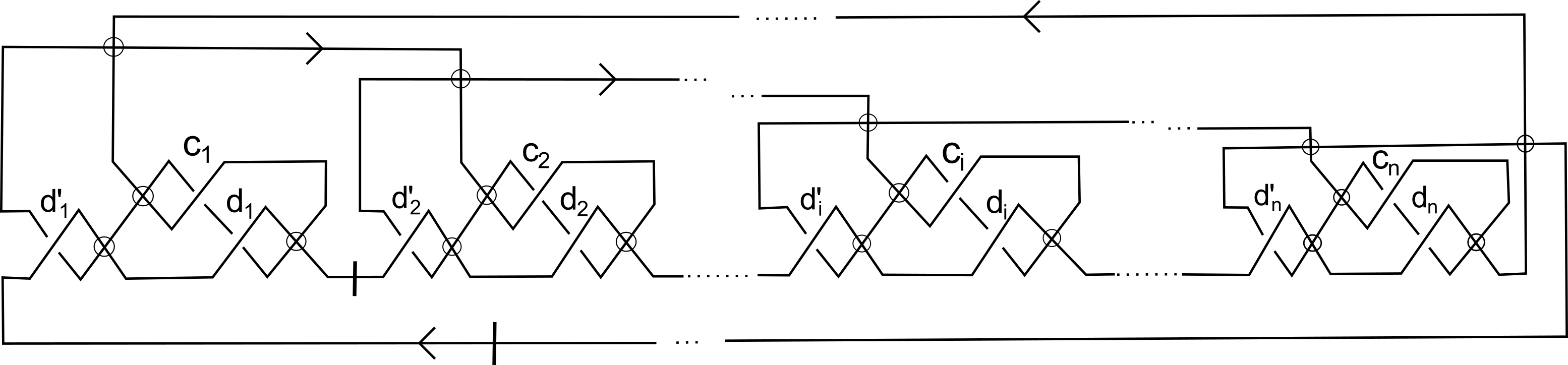}
			\caption{$K_n.$}
			\label{fig:diagram1}
		\end{figure}
		\begin{figure}[h]
			\centering 
			\includegraphics[width=0.28
			\textwidth]{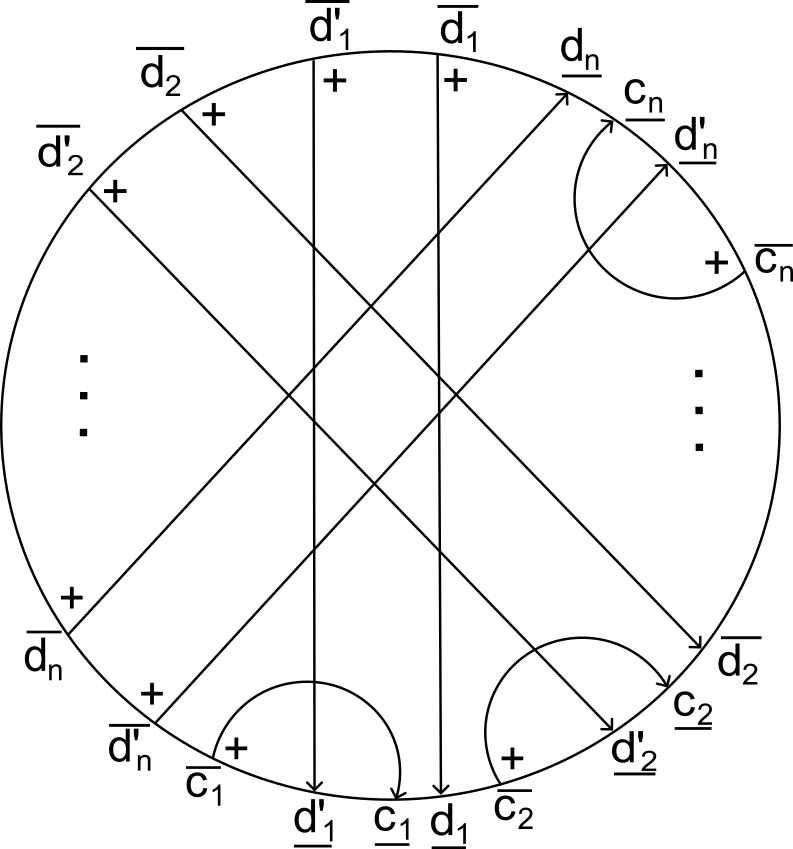}
			\caption{Gauss diagram of $K_n$.}
			\label{fig:gauss}
		\end{figure}

		Now, computing indices for all classical crossings we have,
		\begin{align*}
			ind(d_i)&=(-2n+4i-3) \quad \text{for} \quad i=1,\cdots, n ,\\
			ind(d'_i)&=(-2n+4i-2) \quad \text{for} \quad i=1,\cdots, n ,\\
			ind(c_i)&=1 \quad \text{for} \quad i=1,\cdots, n .
		\end{align*}
		
		Therefore, the odd crossings of $K_n$ are $d_1,\cdots,d_n,c_1,\cdots,c_{n-1}$ and $c_n$. Hence, the odd writhe of $K_n$,
		\[J(K_n)= \sum_{i=1}^n sgn(d_i) + \sum_{i=1}^n sgn(c_i)=2n.\]
		By Lemma~\ref{le:odd}, \[A(K_n) \geq n.\]
		Now, we show that the twisted knot diagram $K_n$ has arc shift number n by applying a sequence of $n$ number of arc shift moves along with generalized Reidemeister moves to obtain a trivial diagram from $K_n$.
		
		Each block $B_i$ transforms into two parallel strands after applying one arc shift move on the red arc $(a,b)$, along with generalized Reidemeister moves (Fig.~\ref{fig:block}). Applying this in every block in $K_n$ gives a trivial diagram without a bar (Fig.~\ref{fig:arcsn}).
	
		\begin{figure}[h]
			\centering 
			\includegraphics[width=0.8\textwidth]{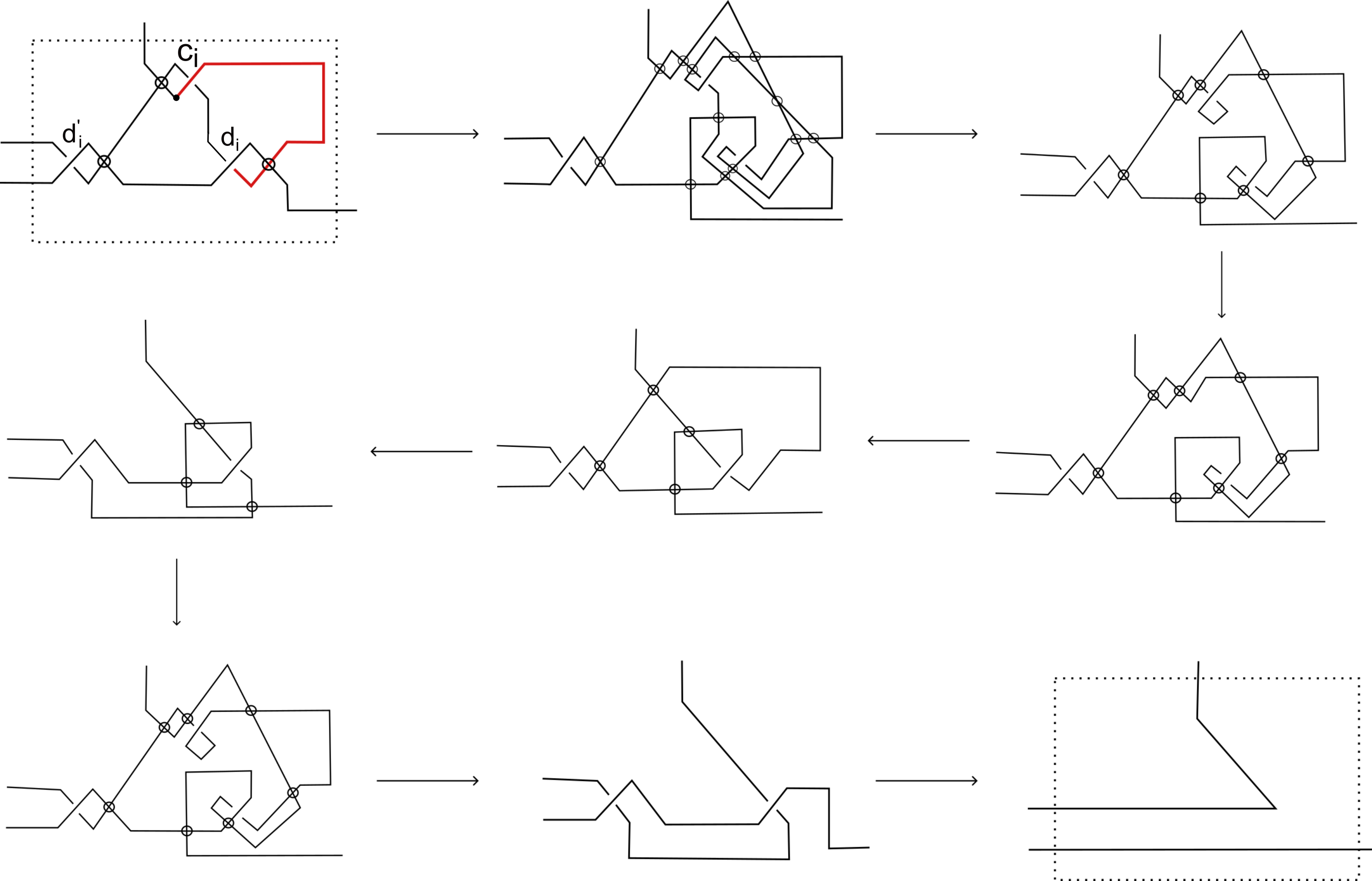}
			\caption{Applying arc shift move in each block.}
			\label{fig:block}
		\end{figure}
		\begin{figure}[h]
			\centering 
			\includegraphics[width=0.7\textwidth]{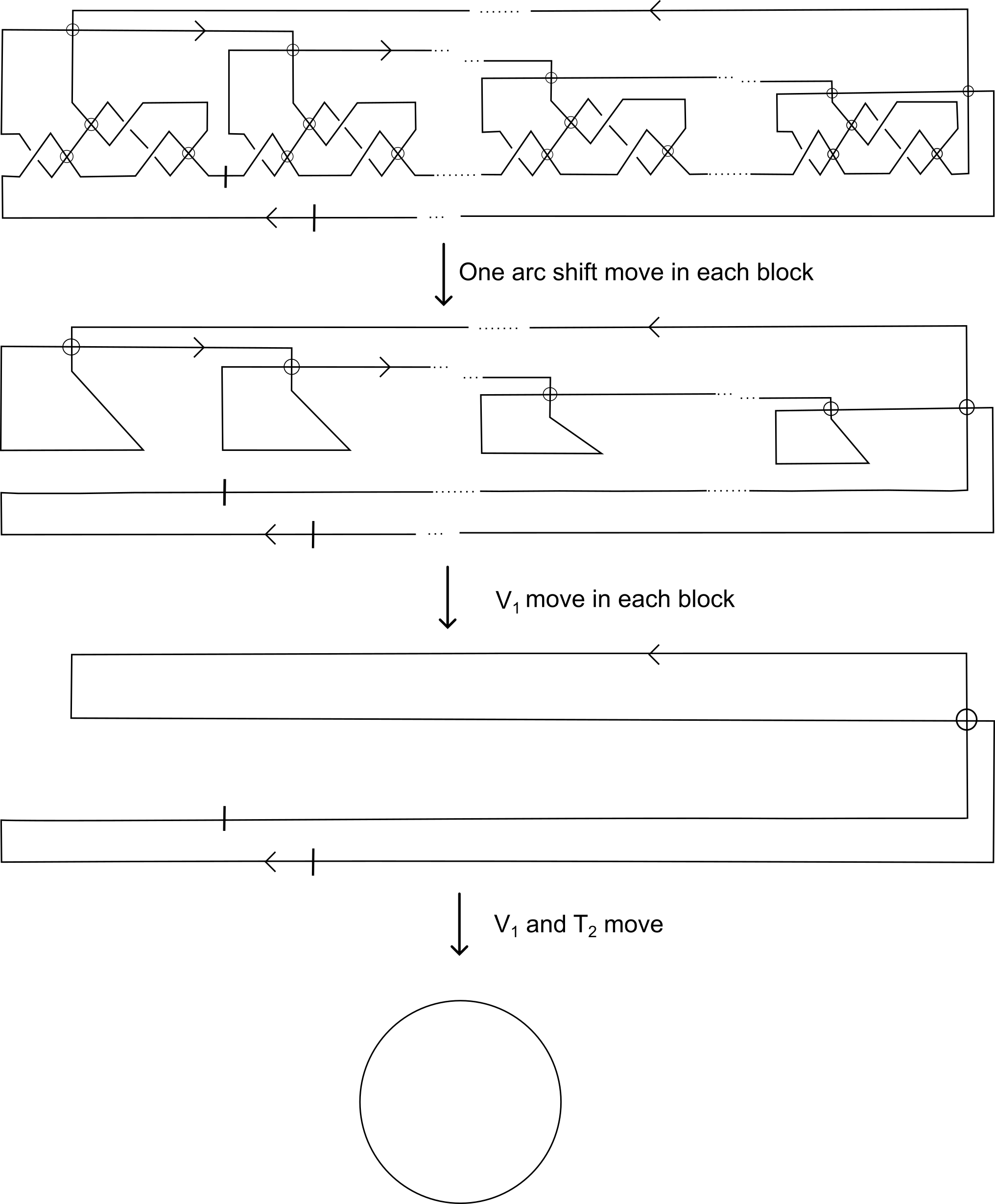}
			\caption{$n$ arc shift moves transforming $K_n$ to a trivial diagram without a bar.}
			\label{fig:arcsn}
		\end{figure}

		Now, to show that each knot in the family is distinct, we compute the polynomial invariant $Q(s,t)$ for $K_n$ $(1 \leq n \leq m)$ \cite{Naoko1}. By Remark~\ref{rem:over}, we need the values of $\bar{\rho}^{O}(c)$, $p^{O}(c)$ and $p^{U}(c)$, which are given in Table~\ref{tab:firstknot}.
		
		\begin{table}
			\centering
			\begin{tabular}{ |c|c|c|c|c|} 
				\hline
			crossings & $ind^{O}(c)$ & $\bar{\rho}^{O}(c)$ & $p^{O}(c)$ & $p^{U}(c)$ \\ 
				\hline 
				$d_1$ & $-2n-2$ & $0$ & $1$ & $1$ \\ 
				\hline
				$d'_1$ & $-2n-1$ & $1$ & $1$ & $1$ \\
				\hline
				$d_i, i=2,\cdots,n$ & $-2n+4i-3$ & $1$ & $0$ & $0$ \\
				\hline
				$d'_i, i=2,\cdots,n$ & $-2n+4i-2$ & $0$ & $0$ & $0$ \\
				\hline
				$c_i, i=1,\cdots,n$ & $1$ & $1$ & $0$ & $0$ \\
				\hline
				
			\end{tabular}
			\caption{}
			\label{tab:firstknot}
		\end{table}
		
		Therefore, for $K_n$$(1 \leq n \leq m)$, \[ Q_{K_n}(s,t) = (st-1)^2+(t-1)^2+(2n-1)(s-1)^2.\]
		Hence, the proof.
	\end{proof}	
	\begin{remark}
		Note that, in Theorem~\ref{th:Kn}, we
		 construct an unknotting sequence $K_n, K_{n-1},\cdots, K_1, K_0$ where $K_0$ is the trivial twisted knot without bar. Also notice that, each $K_i$ has arc shift number $i$ for $1\leq i \leq n$.
	\end{remark}
	In the following theorem, we construct two distinct unknotting sequences of twisted knots and find the arc shift number for each knot in those sequences.
	\begin{theorem}
		For every $m \geq 1$, there exist two distinct unknotting sequences of twisted knots
		 $$ K_m, K_{m-1}, \cdots, K_{1}, K_{0}$$ and 
		 	 $$ K'_m, K'_{m-1}, \cdots, K'_{1}, K'_{0}$$ such that $K_{0}$ and $K'_0$ are trivial twisted knots without and with a bar respectively. Further, the arc shift number for each $K_n$ and $K'_n$ is $n$ for all  $1 \leq n \leq m$.
	\end{theorem}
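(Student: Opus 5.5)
The first sequence comes from Theorem~\ref{th:Kn} and the remark after it. There $K_n$ is built from $n$ blocks $B_1,\dots,B_n$. One arc shift move on the red arc of a block, followed by generalized Reidemeister moves, turns that block into two parallel strands. So removing the last block turns $K_n$ into $K_{n-1}$, and $K_m,K_{m-1},\dots,K_0$ is an unknotting sequence. Here $K_0$ is the trivial knot without a bar, and $A(K_n)=n$ by Theorem~\ref{th:Kn}.

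For the second sequence, I will modify the construction so that the final trivial diagram carries exactly one bar. The simplest choice is to place one extra bar on an arc of $K_n$ that lies outside all the blocks, for instance on the strand closing up the diagram in Fig.~\ref{fig:diagram1}; call the result $K'_n$. A bar can shift the parity of bars on the counting components $D^O_c$, $D^U_c$. It cannot change the indices $ind(c)$, since these only count classical crossings. Hence the index computations for $d_i,d'_i,c_i$ are unchanged, the odd crossings are the same, and $J(K'_n)=2n$. By Lemma~\ref{le:odd}, $A(K'_n)\ge n$.

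For the upper bound, I apply the same $n$ local arc shift moves, one in each block. These take place away from the extra bar, so the same Reidemeister moves reduce $K'_n$ to a trivial diagram with one bar, which is $K'_0$. Removing one block at a time gives the sequence $K'_m,\dots,K'_0$ with $A(K'_n)=n$. One point needs care: the block simplification in Fig.~\ref{fig:block} should not rely on bar-sensitive moves that would interact with the added bar. Because the bar sits outside the block, this is only a matter of locality. If adding a bar turns out to make $K'_n$ equivalent to $K_n$ (for example, by sliding the bar through a block using $T$-moves and cancelling it), I will instead put the bar inside block $B_1$ on a suitable arc. I would then redo the reduction of that block with a Type 2 arc shift move, so that one bar survives.

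The main obstacle is showing that the two sequences really are distinct, i.e.\ $K_n\neq K'_n$ for each $n\ge 1$, and that the $K'_n$ are pairwise distinct. For this I will recompute $Q(s,t)$ for $K'_n$ as in Table~\ref{tab:firstknot}. The values $\bar\rho^O(c)$ are unchanged. The parities $p^O(c)$, $p^U(c)$ flip exactly for those crossings $c$ whose over-counting (resp.\ under-counting) component contains the added bar. Since the total number of bars on $D^O_c\cup D^U_c$ is now odd, exactly one of $p^O(c),p^U(c)$ flips at each crossing. So each term $(s^{\bar\rho}t^{p^O}-1)(s^{\bar\rho}t^{p^U}-1)$ picks up a mixed factor. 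For instance, the $c_i$ terms become $(s-1)(st-1)$ instead of $(s-1)^2$. The resulting polynomial will differ from $Q_{K_n}$, which separates the two sequences, and it still depends on $n$ through a coefficient like $(2n-1)$, which distinguishes the $K'_n$ among themselves. $K_0\neq K'_0$ is standard, as the unknot with and without a bar.
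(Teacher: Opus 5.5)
Your proposal is correct and follows the same strategy as the paper: obtain $K'_n$ from $K_n$ by adding a single bar away from the shifted arcs, get $A\ge n$ from $J=2n$ (bars do not affect indices), reuse the same $n$ arc shift moves for $A\le n$, and separate the knots with $Q(s,t)$. The only real difference is that you take the block family of Theorem~\ref{th:Kn} as the base, whereas the paper builds a new family (Fig.~\ref{fig:twofam}) in which each vertical block collapses to a virtual crossing.

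Two small points. First, your parity observation, carried through, makes every crossing with $\bar{\rho}^{O}(c)=0$ contribute $0$ and every other crossing contribute $sgn(c)(st-1)(s-1)$, so $Q_{K'_n}(s,t)=2n(st-1)(s-1)$; this is exactly the polynomial the paper gets for its $K'_n$. Second, your fallback is unnecessary, since the parity of the number of bars is preserved by all extended Reidemeister moves, which already gives $K_n\neq K'_{n'}$ for all $n,n'$.
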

	\begin{proof}
		For $1 \leq n \leq m$, we take $K_{n}$ and $K'_{n}$ as the twisted knot diagrams shown in Fig.~\ref{fig:twofam}. We denote the classical crossings in $K_{n}$ and $K'_{n}$ by $a_{i}, b_{i}$ and $a'_{i}, b'_{i}, i=1,\dots,n$, respectively. Notice that the only difference between $K_{n}$ and $K'_{n}$ is one bar on the right-most arc of the diagram. 
		\begin{figure}[h!]
			\centering
			\subfloat[$K_n.$]{\includegraphics[width=0.3\textwidth]{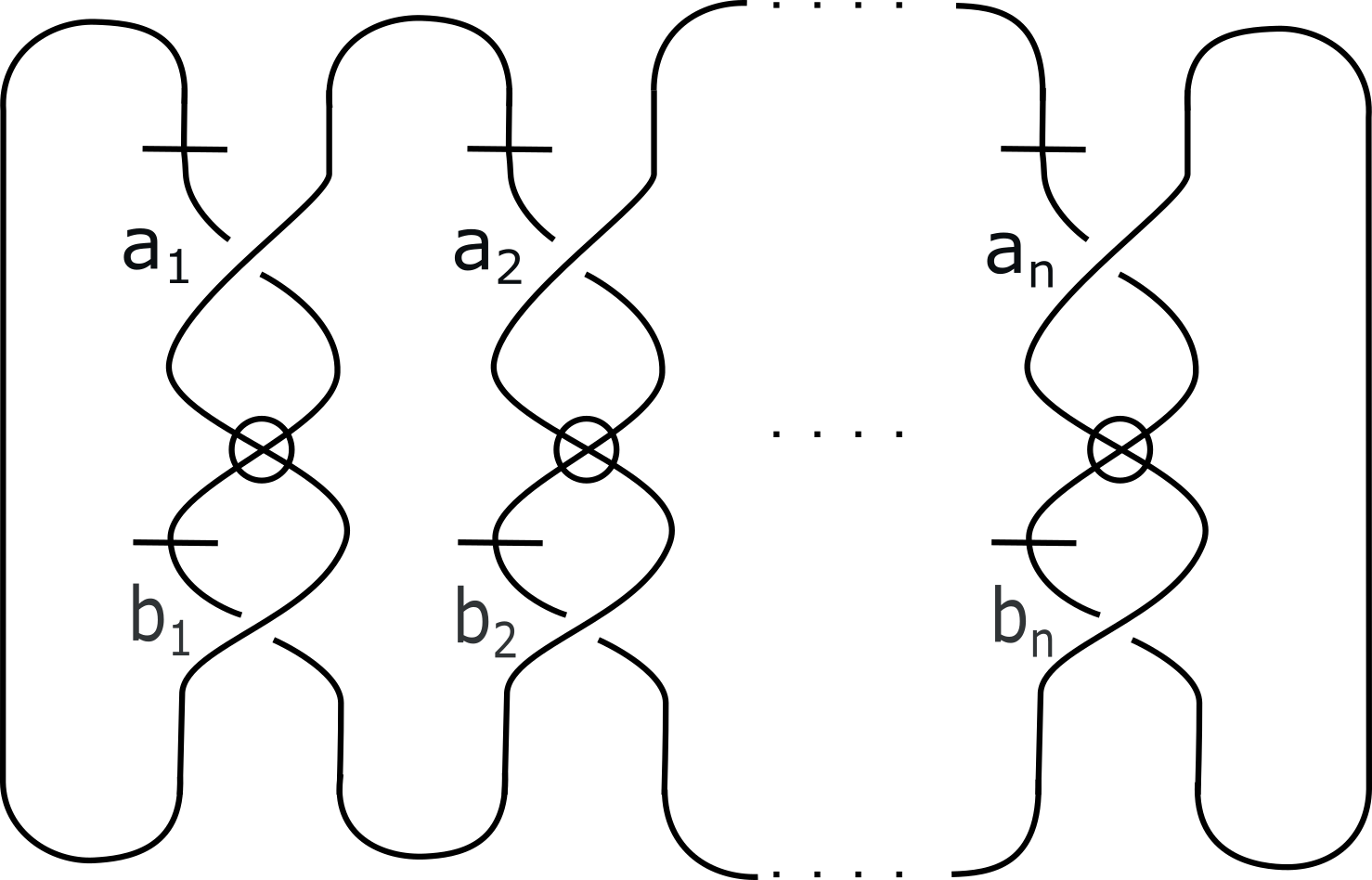}}
			\hspace{1cm}
			\subfloat[$K'_n.$]{\includegraphics[width=0.3\textwidth]{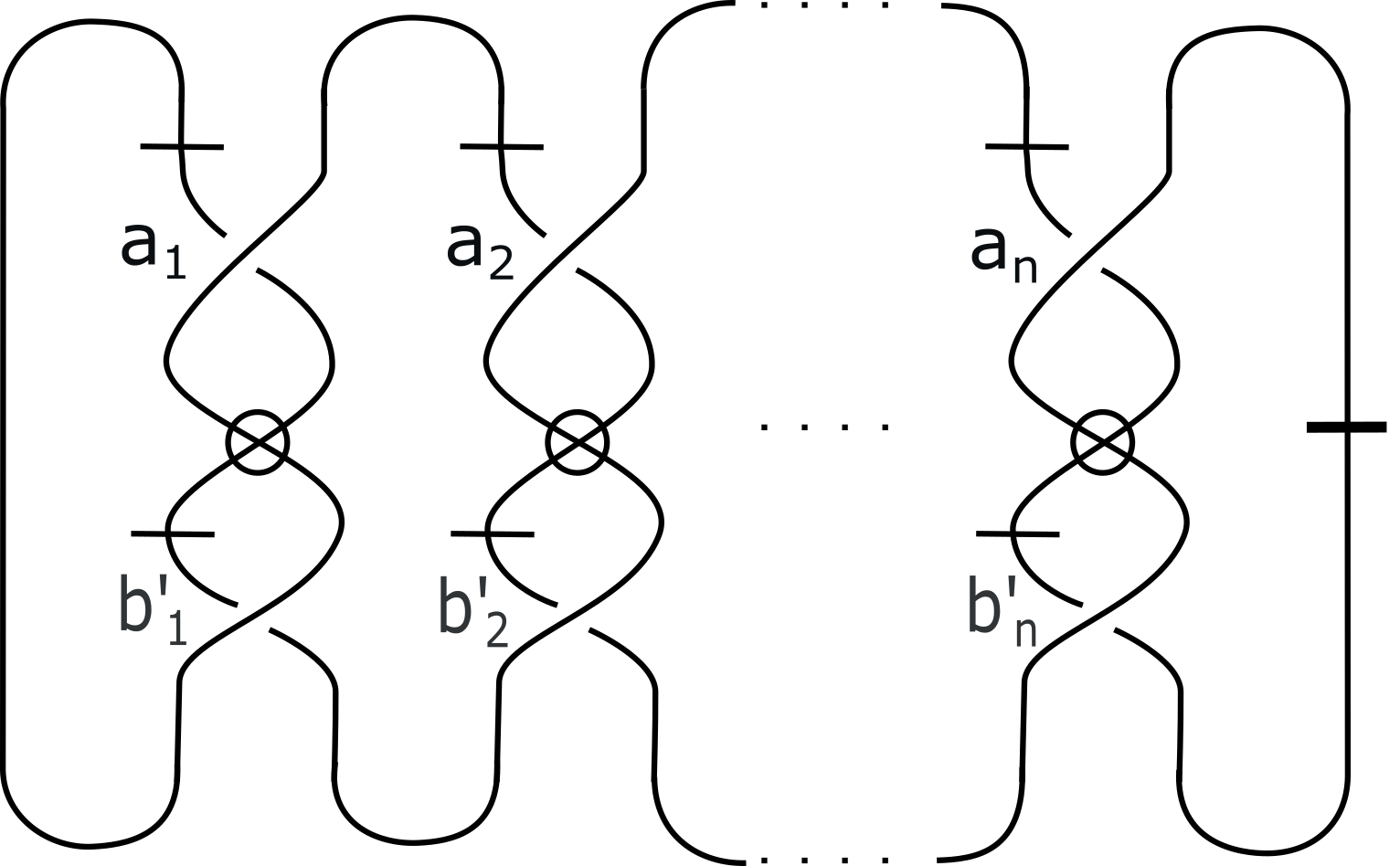}} 
			\caption{}
			\label{fig:twofam}
		\end{figure}
		
		First, we transform the local diagram with one classical crossing and one bar (Fig.~\ref{fig:local1}) to a diagram with one classical crossing, one bar and two virtual crossings. Now, this local transformation converts each vertical block in the diagram of $K_{n}$ and $K'_{n}$ to a single virtual crossing (Fig.~\ref{fig:diagram2}). Therefore, it is easy to observe that the twisted knots $K_{n}$ and $K'_{n}$ transform to a trivial knot without and with a bar, respectively, by applying $n$ number of arc shift moves along with generalized Reidemeister moves (See Fig.~\ref{fig:diagram3},\ref{fig:diagram4}). Therefore,
		\[A(K_{n}) \leq n \quad \text{and} \quad A(K'_{n}) \leq n.\]
		\begin{figure}[h!]
			\centering 
			\includegraphics[scale=0.45]{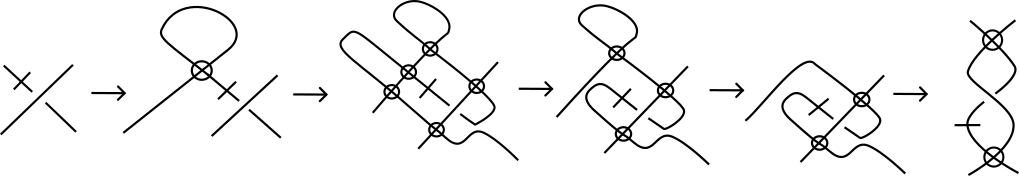}
			\caption{}
			\label{fig:local1}
		\end{figure}
		\begin{figure}[h!]
			\centering 
			\includegraphics[width=0.35\textwidth]{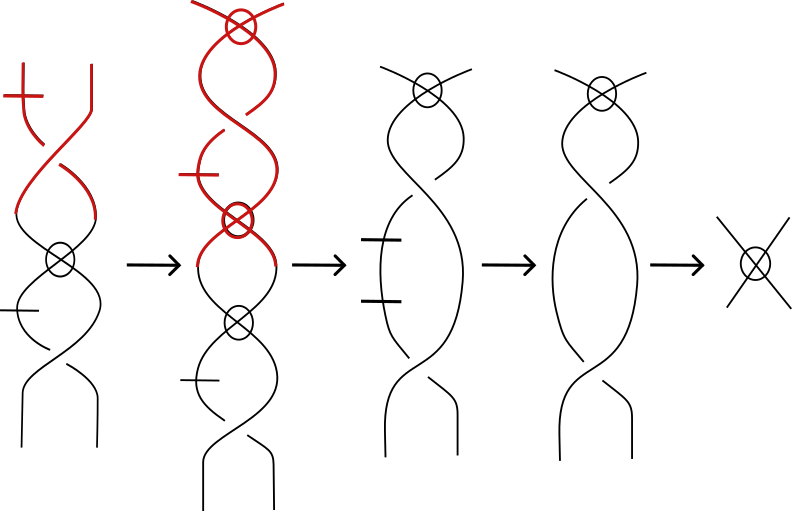}
			\caption{Each vertical block converts to a virtual crossing.}
			\label{fig:diagram2}
		\end{figure}
		\begin{figure}[h!]
			\centering 
			\includegraphics[width=0.7\textwidth]{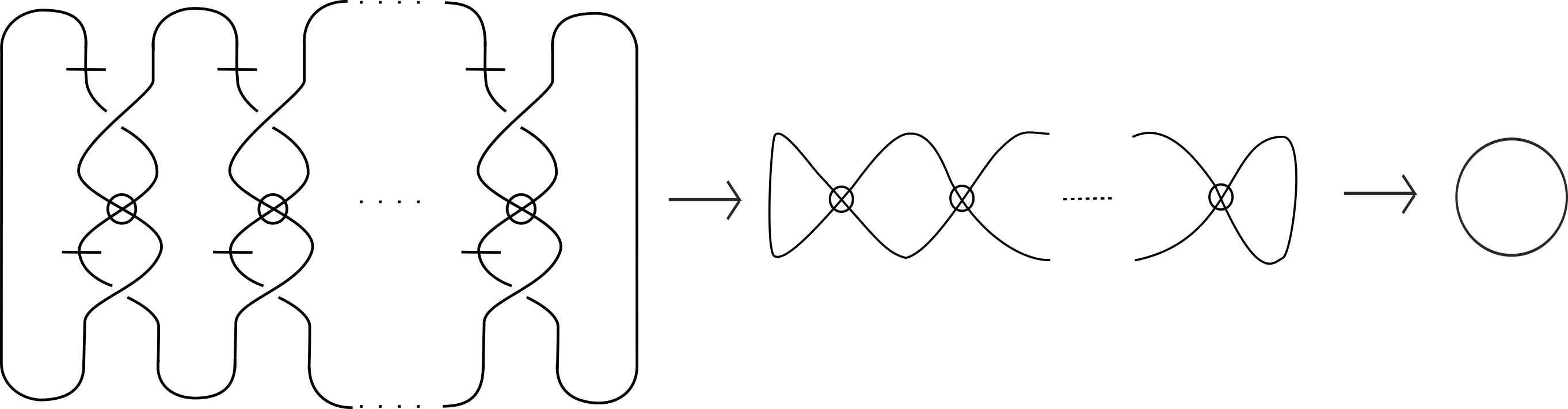}
			\caption{$K_n$ transforms to a trivial diagram without bar.}
			\label{fig:diagram3}
		\end{figure}
		\begin{figure}[h!]
			\centering 

			\includegraphics[width=0.7\textwidth]{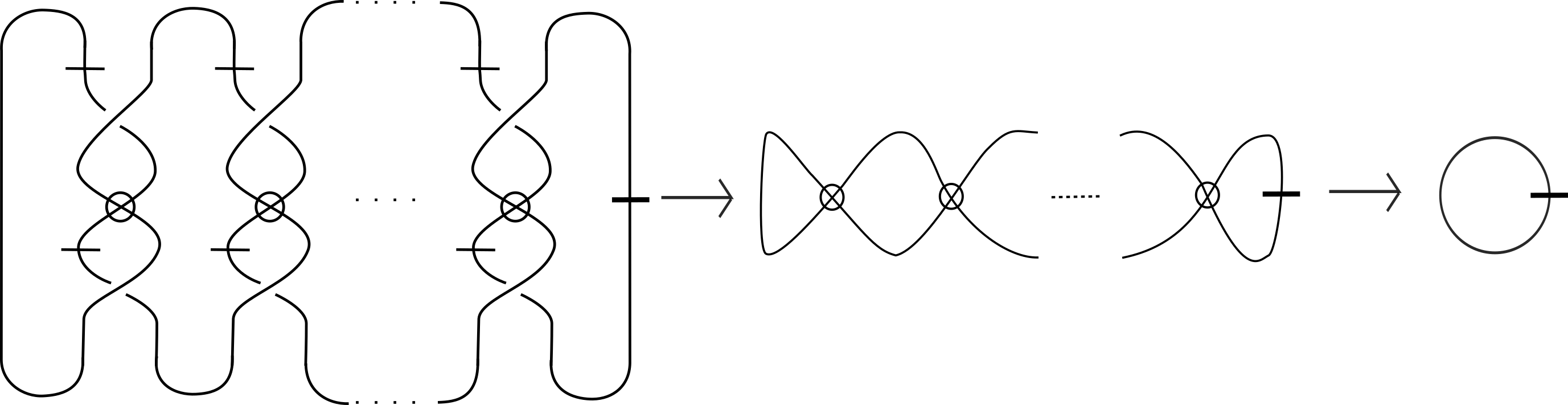}
			\caption{$K'_n$ transforms to a trivial diagram with one bar.}
			\label{fig:diagram4}
		\end{figure}
		Now, the indices for all the classical crossings of $K_{n}$ are as follows.
		\begin{align*}
			& ind(a_{2i}) = 1 =  ind(b_{2i-1}) \; \text{and} \\
			& ind(a_{2i-1}) = -1 =  ind(b_{2i}) .\end{align*} 
		and the indices for all the classical crossings of $K'_n$ are as follows.
		\begin{align*}
			& ind(a'_{2i}) = 1 =  ind(b'_{2i-1}) \quad \text{for} \; i=1,\dots,n, \; \text{and} \\
			& ind(a'_{2i-1}) = -1 =  ind(b'_{2i})  \quad \text{for}\; i=1,\dots,n.
		\end{align*}
		Therefore, the odd writhe for $K_{n}$ and  $K'_{n}$ are, $J(K_{n})= 2n = J(K'_{n})$ and by Lemma~\ref{le:odd}, $$A(K_{n}) \geq n \;  \text{and} \; A(K'_{n})\geq n .$$ \\
		Hence,
		$$A(K'_{n})= n=A( K'_n).$$
			Notice that we obtain $K_{n}$ from $K_{n+1}$ and $K'_{n}$ from $K'_{n+1}$ respectively  for each $n (1\leq n \leq m)$ by applying the local transformation as shown in  Fig.~\ref{fig:diagram2}, in the first vertical block
			 of the diagrams. Hence, we get two unknotting sequences with each twisted knot $K_{n}$ and $K'_{n}$ in the sequences has arc shift number $n$ for all  $1\leq n \leq m$.

		Finally, we show that all the twisted knots in these two sequences are distinct by compute the polynomial invariant $ Q(s,t)$ for $K_{n}$ and  $K'_{n}$ for all $1 \leq n \leq m$. The required values of  $\bar{\rho}^{O}(c)$, $p^{O}(c)$ and $p^{U}(c)$ for $K_{n}$ and  $K'_{n}$ are given in Table~\ref{tab:Kn} and \ref{tab:Kbarn} respectively. Therefore,  
		\[ Q_{K_n}(s,t) = n(st-1)^2+n(s-1)^2 \; \text{for}\;1 \leq n \leq m,\]
		and
		\[ Q_{K'_n}(s,t) = 2n(st-1)(s-1) \; \text{for}\; 1 \leq n \leq m.\]
		Hence the proof.
	\begin{table}[h!]
		\centering
		\begin{tabular}{ |c|c|c|c|c|}
			\hline
			crossings c & $ind^O(c)$ & $\bar{\rho}^{O}(c)$ & $p^{O}(c)$ & $p^{U}(c)$ \\ 
			\hline 
			$a_{2i-1}, i=1,\dots,n$ & $-1$ & $1$ & $1$ & $1$ \\ 
			\hline
			$a_{2i}, i=1,\dots,n$ & $1$ & $1$ & $1$ & $1$ \\
			\hline
			$b_{2i-1}, i=1,\dots,n$ & $1$ & $1$ & $0$ & $0$ \\
			\hline
			$b_{2i}, i=1,\dots,n$ & $-1$ & $1$ & $0$ & $0$  \\
			\hline
			\end{tabular}
		\caption{}
		\label{tab:Kn}
	\end{table}
	\begin{table}[h!]
		\centering
		\begin{tabular}{ |c|c|c|c|c|}
			\hline
			crossings c & $ind^O(c)$ & $\bar{\rho}^{O}(c)$ & $p^{O}(c)$ & $p^{U}(c)$ \\ 
			\hline 
			$a'_{2i-1}, i=1,\dots,n$ & $-1$ & $1$ & $1$ & $0$ \\ 
			\hline
			$a'_{2i}, i=1,\dots,n$ & $1$ & $1$ & $0$ & $1$ \\
			\hline
			$b'_{2i-1}, i=1,\dots,n$ & $1$ & $1$ & $0$ & $1$ \\
			\hline
			$b'_{2i}, i=1,\dots,n$ & $-1$ & $1$ & $1$ & $0$  \\
			\hline
			
		\end{tabular}
		\caption{}
		\label{tab:Kbarn}
	\end{table}
		\end{proof}
	\section{Region Arc Shift Move for Twisted Knots}\label{sec:ras}
	In this section, we generalize the concept of region arc shift move of virtual knot diagrams \cite{Gill1} for the case of twisted knot diagrams. Here, a {\it region} is defined to be a connected component of the 4-valent graph $D_G$ in $\mathbb{R}^2$ obtained from a twisted knot diagram $D$ by putting a vertex at each classical and virtual crossing. Notice that the edges of $D_G$ have bars on it.
	\begin{definition}
		A region arc shift(R.A.S) move at a region $R$ of a twisted knot diagram $D$ is a local transformation that involves applying arc shift moves on each arc of the boundary of $R$. 
	\end{definition}
	\begin{figure}[h]
		\centering 
		\includegraphics[width=0.6\textwidth]{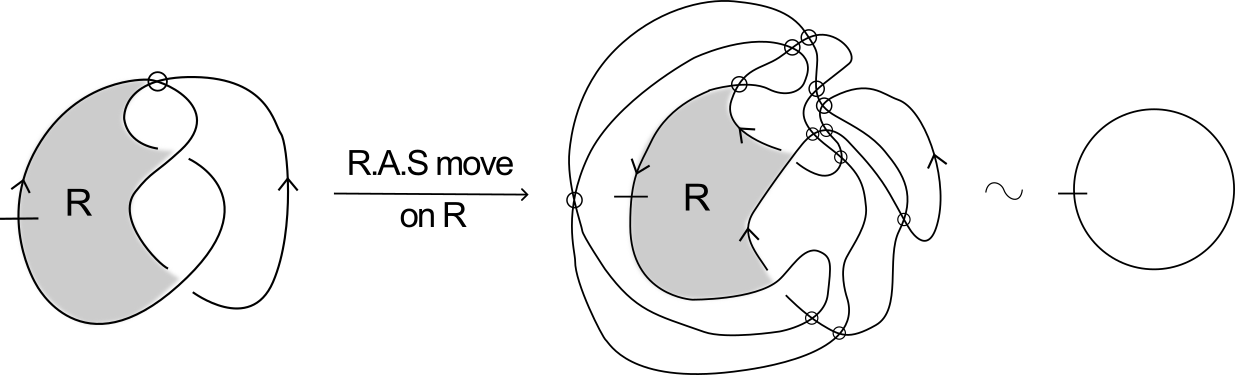}
		\caption{Region arc shift move applied on the region $R$.}
		\label{fig:rarcs}
	\end{figure}	
	In Fig.~\ref{fig:rarcs}, one region arc shift is applied on the region $R$. Applying the region arc shift move on a region of a virtual knot diagram twice yields the original diagram back. This is proved for virtual knots by Gill, Kaur and Prabhakar \cite{Gill1}. A similar argument proves the following proposition.
	\begin{proposition}
		Let $D$ be a twisted knot diagram and $R$ be a region of $D$.
		Applying the region arc shift move twice consecutively on $R$ results in a diagram equivalent to $D$.
	\end{proposition}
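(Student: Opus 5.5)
I will work with Gauss diagrams, since the effect of an arc shift move is easiest to track there; the argument follows the one given by Gill, Kaur and Prabhakar \cite{Gill1} for virtual knots. The first step is to record what a single arc shift move on an arc $(a,b)$ through the crossings $(c_1,c_2)$ does, using Fig.~\ref{fig:arc move} and the Type 1 and Type 2 configurations in Figs.~\ref{fig:type1} and \ref{fig:type2}. The move exchanges the order in which the endpoints of the chords $c_1$ and $c_2$ occur along the arc, changes the signs of the classical crossings among $c_1,c_2$ (as noted in the Remark after the definition), and keeps the bar parity on the arc. The only new crossings are virtual, so in the Gauss diagram they are invisible. I will then check, case by case over the configurations in Figs.~\ref{fig:type1} and \ref{fig:type2}, that applying the arc shift move twice on the same arc gives back the original order of endpoints, the original signs, and the same bars. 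So at the Gauss-diagram level a double arc shift on one arc is the identity. On the diagram side, the extra virtual crossings and possible virtual kinks are removed by $V_1$–$V_4$, and a doubled bar, if it appears, is removed by $T_1$.

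Next I treat the region. A region arc shift move at $R$ applies one arc shift on each boundary arc $e_1,\dots,e_k$ of $R$. These arcs are pairwise distinct segments of the diagram, each running between two consecutive crossings of $D_G$, and each single arc shift acts only in a small neighbourhood of its own arc. I will argue that the local pictures therefore do not interfere with one another. The point to verify is that after the first region move, the arcs on which the second move acts correspond to the same segments $e_1,\dots,e_k$ and bound the region that corresponds to $R$. This correspondence holds once the new virtual crossings are discarded, because in the Gauss diagram the cyclic sequence of classical crossing endpoints and bars along the boundary of $R$ is determined by the same chords.

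With this correspondence in place, the second region move is the same arc shift, applied once more, on each $e_j$. By the first step each $e_j$ returns to its original local state, so the composite move returns the original Gauss diagram. A twisted knot diagram is determined up to virtual and twisted moves by its Gauss diagram; this fact is already used implicitly in Theorem~\ref{th:for}. Hence the result is equivalent to $D$.

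The hardest step will be the bookkeeping where two boundary arcs of $R$ share an endpoint crossing $c$. Then $c$ is involved in two arc shifts, and its sign is changed twice within a single region move. I will handle this by computing the total effect on each crossing. A crossing $c$ on $\partial R$ is changed once for each boundary arc incident to it, which gives a fixed number $m_c$. After two region moves it has been changed $2m_c$ times, an even number. Likewise, the reordering of the endpoints of $c$ is applied an even number of times, and the swaps at a given crossing commute because they act on distinct endpoint positions. I will verify this with a Gauss-diagram computation. If that computation works, it settles the shared-crossing issue uniformly. I also expect to check the bar on each arc separately: in a Type 2 move the bar stays on the arc, so its parity is preserved, and $T_1$ cancels any pair of bars.
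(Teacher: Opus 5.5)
Your overall route is the paper's. Its proof defers to Proposition~5.1 of Gill--Kaur--Prabhakar, which rests on two facts. First, an arc shift acts on the Gauss diagram as an involution: it reverses a short segment, which swaps the adjacent chord endpoints, flips the corresponding signs and carries the bars along. Second, a Gauss diagram with bars determines the diagram up to virtual moves and moving bars past virtual crossings. Your Gauss-level bookkeeping is fine. That includes the point that at a corner of $R$ the two arc shifts contain different endpoints of the same chord, so the two reversals are disjoint and commute.

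The step that does not hold up is the one you flag yourself: that after the first move there is a region ``corresponding to $R$'' whose boundary arcs are again $e_1,\dots,e_k$.

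\begin{itemize}
\item You justify this from the Gauss diagram, but regions are invisible there. The claim genuinely depends on how the first move is drawn in the plane.
\item Suppose a reconnecting strand of some arc shift is routed on the $R$ side of $e_j$. Then new virtual crossings lie on the boundary of the new region, and its boundary arcs are no longer the $e_j$.
\item The second move then reverses different segments. An arc between two virtual crossings does nothing. An arc from a classical crossing to a new virtual one flips a single sign without a swap. So the composite need not be the identity on Gauss diagrams.
\end{itemize}

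The missing ingredient is a specific planar realization. Perform every reconnection on the side of $e_j$ away from $R$. Then all new virtual crossings for $e_j$ lie in the region across $e_j$ and in the regions diagonally opposite $R$ at the endpoints of $e_j$. Its reconnecting strands cross the continuations of $e_{j-1}$ and $e_{j+1}$ beyond the shared vertices, never $e_{j\pm1}$ themselves. With this choice, $R$ survives as a region of the new diagram with the same boundary arcs and the same classical/virtual vertices. This is also what makes ``apply the move twice on $R$'' meaningful in the first place. The second move is then literally the same family of disjoint segment reversals, and your involution argument completes the proof.
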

	\begin{proof}
		See Proposition~5.1 in \cite{Gill1}.
	\end{proof}
	By Theorem~\ref{th:for}, proving that the region arc shift move is an unknotting operation reduces to showing that the forbidden moves 
	$F_1 (\text{or}\;F_2)$, $F_3(\text{or}\;F_4)$ and $T_4$
	can be realized using region arc shift moves.
	\begin{figure}[h!]
		\centering
		\includegraphics[width=0.5
		\textwidth]{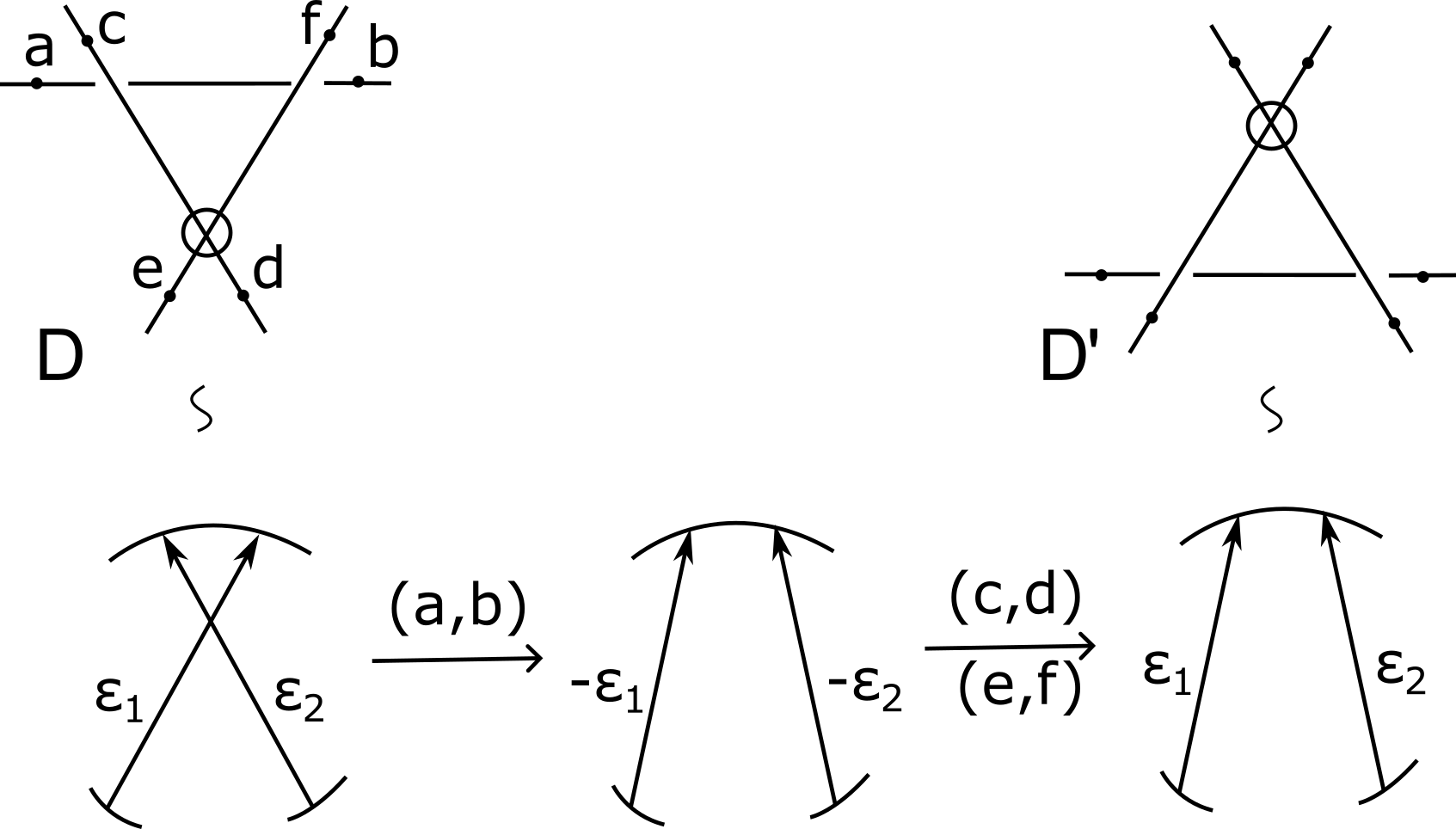}
		\caption{$F_1$ move is realized by one R.A.S.move.}
		\label{fig:forrasF1}
	\end{figure}
	\begin{proposition}\label{pr:F2}
		Let $D$ be a twisted knot diagram and $D'$ be the diagram
		obtained from $D$ by applying forbidden move $F_1 (\text{or}\;F_2)$. Then, there exists a region $R$
		in $D$ such that region arc shift move on region $R$ results in the diagram equivalent to $D'$.
	\end{proposition}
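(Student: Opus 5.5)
The plan is to argue locally, following the virtual-knot argument of Gill, Kaur and Prabhakar \cite{Gill1}, and then to check the extra twisted cases. The forbidden move $F_1$ involves two classical crossings and one virtual crossing. Before the move, the two classical crossings together with the virtual crossing bound a small triangular region; call it $R$. The region $R$ is bounded by three arcs (edges of $D_G$), and each of these arcs passes through exactly one pair of crossings among the three vertices of the triangle. I will take $R$ to be this triangular region, using the picture in Fig.~\ref{fig:forrasF1}.

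The first step is to apply the region arc shift move at $R$, which means performing an arc shift move on each of the three boundary arcs, and to record its effect on the Gauss diagram. An arc shift on an arc $(a,b)$ joining crossings $c_1,c_2$ exchanges the order in which $c_1$ and $c_2$ are met along the knot. It also alters their signs, as stated in the remark after the definition of the arc shift move, and every new crossing it creates is virtual. On the Gauss diagram, therefore, the effect is to swap the positions of two adjacent endpoints belonging to the chords of $c_1,c_2$ and to reverse those chords.

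I will check two things. First, the arc joining the two classical crossings is the one whose shift realises the $F_1$ exchange of adjacent endpoints. Second, the shifts on the two arcs incident to the virtual crossing do not change the Gauss diagram's chord order, because a virtual crossing carries no chord. Their combined effect on the signs and orientations of the two classical chords then cancels, or can be undone by $R_2$, $V$-moves and the twisted moves $T_1,T_2,T_3$.

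After this, I will redraw the result, using $V_1$--$V_4$ and the twisted moves to remove the virtual crossings introduced by the shifts. The aim is to reach the local diagram of $D'$ exactly as in Fig.~\ref{fig:forrasF1}. Since the Gauss diagram (with bars) determines the twisted knot up to virtual and twisted moves, it suffices to show that the resulting Gauss diagram equals that of $D'$. $F_2$ is the mirror or reversed-orientation version of $F_1$, so the same argument applies with the opposite choice of triangle.

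The main obstacle is bookkeeping for bars. The boundary arcs of $R$ may carry bars, so some of the three shifts are Type 2 rather than Type 1. Each Type 2 configuration in Fig.~\ref{fig:type2} introduces extra bars and reverses local orientation differently. I would handle this case by case. The first step is to use $T_2$/$T_3$ to slide all bars off the boundary of $R$ before performing the move, which reduces everything to the Type 1 situation already treated in \cite{Gill1}. Only if the bars cannot all be moved off would I use the parity of the bars to confirm that they cancel in pairs after the three shifts.
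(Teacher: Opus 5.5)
Your main line is the paper's argument. The paper observes that the local picture of $F_1$ (or $F_2$) carries no bars, so the region arc shift on the triangular region consists of three Type~1 shifts and Proposition~5.2 of \cite{Gill1} applies verbatim; your Gauss-diagram bookkeeping reconstructs that argument (the shift on the $c_1$--$c_2$ edge swaps the two adjacent endpoints, and each classical crossing lies on exactly two edges of the triangle, so its sign changes cancel).

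Your last paragraph treats a case that cannot occur, because the edges of $R$ lie inside the bar-free disk in which $F_1$ is performed. The remedy you propose would not work in general anyway: $T_2$ only creates or cancels adjacent pairs of bars, and $T_3$ needs bars on all four arms of a classical crossing and alters that crossing. A single bar can be slid past a virtual crossing (by $T_1$) but not, in general, past a classical one.
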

	\begin{proof}
		$F_1$(or $F_2)$ move involves a local diagram with no bars. The same proof for the case of virtual knots is applicable here. See Proposition~5.2 in \cite{Gill1}. See Fig.~\ref{fig:forrasF1}
	\end{proof}
	
	\begin{proposition}\label{pr:F3}
		Let $D$ be a twisted knot diagram and $D'$ be the diagram
		obtained from $D$ by applying forbidden move $F_3 (\text{or}\;F_4)$. Then, there exists a region $R$
		in $D$ such that region arc shift move on region $R$ results in the diagram equivalent to $D'$.
	\end{proposition}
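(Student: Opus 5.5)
The plan is to reduce to the virtual case, exactly as in Proposition~\ref{pr:F2}. The local diagram of the forbidden move $F_3$ (or $F_4$) contains three strands, two virtual crossings and one classical crossing, and no bars. I therefore expect the argument of Proposition~5.3 in \cite{Gill1} for virtual knots to carry over essentially verbatim. Concretely, in the local picture of $F_3$, I will take $R$ to be the small triangular region bounded by the three strands. The vertices of $R$ are the classical crossing and the two virtual crossings, and its three boundary edges carry no bars. The region arc shift move on $R$ then consists of three arc shift moves, one on each boundary arc of $R$. Each of these is a Type~1 arc shift move, since there are no bars on the arcs involved.

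The key steps, in order, are as follows.
\begin{enumerate}
\item Identify $R$ in $D$ and list its boundary arcs $(a_1,b_1),(a_2,b_2),(a_3,b_3)$, together with the type (classical or virtual) of the end crossings of each arc.
\item Apply the three arc shift moves. At each vertex, the crossing is met by two boundary arcs, so its order along the strands and its sign are altered twice. For the classical crossing this means the overall sign and over/under information return to their original values, while its position relative to the virtual crossings is transported across the triangle.
\item Simplify the resulting diagram using $V_1$--$V_4$ (mainly $V_2$ and $V_4$ to clear the newly created virtual crossings and slide strands through them), and if necessary $R_2$, to reach the diagram $D'$ obtained from $D$ by $F_3$.
\end{enumerate}
The case of $F_4$ is symmetric: it is the mirror image, with over and under exchanged, so the same region and the same sequence of moves work.

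Because the twisted moves $T_1,T_2,T_3$ are never needed when no bars are present, the whole computation takes place among virtual diagrams. It is therefore literally the virtual computation, and I will cite Proposition~5.3 of \cite{Gill1} together with a figure analogous to Fig.~\ref{fig:forrasF1}.

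The main obstacle I anticipate is step~3: checking that the virtual crossings produced by the three arc shifts really cancel, and that the classical crossing ends up with the correct over/under data and sign, as required by $F_3$. The sign issue is delicate because each arc shift alters the sign of its end crossings, so I must count carefully that the classical vertex is affected an even number of times. I will first try to verify this at the level of Gauss diagrams. There, an arc shift corresponds to swapping adjacent endpoints of chords and changing the signs and orientations of the affected chords. $F_3$ corresponds to exchanging two adjacent tails of chords. Composing the three endpoint swaps coming from the triangle should give exactly this exchange.

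If the diagrammatic bookkeeping becomes unwieldy, I will fall back on this Gauss diagram verification, where virtual crossings are invisible and only the classical chord and its endpoints need tracking.
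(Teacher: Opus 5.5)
There is a genuine gap at the first step: you have misidentified the move $F_3$ (or $F_4$). In this paper, following Xue--Deng, $F_3$ is not bar-free. The region $R$ on which it acts has three boundary arcs. One arc joins two \emph{classical} crossings and carries a bar. Each of the other two arcs joins one of these classical crossings to a virtual crossing. So the local picture has two classical crossings, one virtual crossing and a bar.

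The configuration you describe (three strands, one classical and two virtual crossings, no bars) is the mixed move $V_4$, which is an allowed move, not a forbidden one. Likewise, exchanging two adjacent tails in the Gauss diagram is the bar-free move $F_1$. The move $F_3$ instead exchanges adjacent chord endpoints with a bar between them, which is why the parities $p^{*}$ flip in Proposition~\ref{pr:for3}. Consequently your reduction to the virtual case fails. There is no bar-carrying move in \cite{Gill1} to cite, and your assertion that $T_1,T_2,T_3$ are never needed is unjustified.

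What the argument actually requires, as in the paper, is the following. On this region, apply one Type~2 arc shift move $\bar{A}'_t$ on the barred arc between the two classical crossings. Apply two Type~1 arc shift moves $A'_{s_2}$ on the two classical--virtual arcs. Then verify on Gauss diagrams that the result is equivalent to $D'$.

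The real content is tracking the bar under the Type~2 arc shift, namely where it lands relative to the exchanged chord endpoints. You must also track the order and sign changes at the two classical vertices, each of which is an endpoint of two boundary arcs. Your Gauss-diagram fallback is a suitable tool for this check, but it must be run on the correct configuration with the bar recorded, not on the bar-free one you set up.
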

	\begin{proof}
		The forbidden move $F_3$ consists of one region with three boundary arcs: one arc has two classical crossings with a bar, while the other two arcs consist of one classical crossing and one virtual crossing each. Therefore, we need to apply one type 2 arc shift move $\bar{A}'_t$, and two type 1 arc shift moves $A'_{s_{2}}$. From Fig.~\ref{fig:forrasF4}, we observe that the Gauss diagram of the resulting diagram after applying the region arc shift move on $R$ in $D$, is equivalent to the Gauss diagram of $D'$.  A similar
		result can be identically proved for forbidden move $F_4$ also.
	\end{proof}
	\begin{figure}[h]
		\centering
		\includegraphics[width=0.5\textwidth]{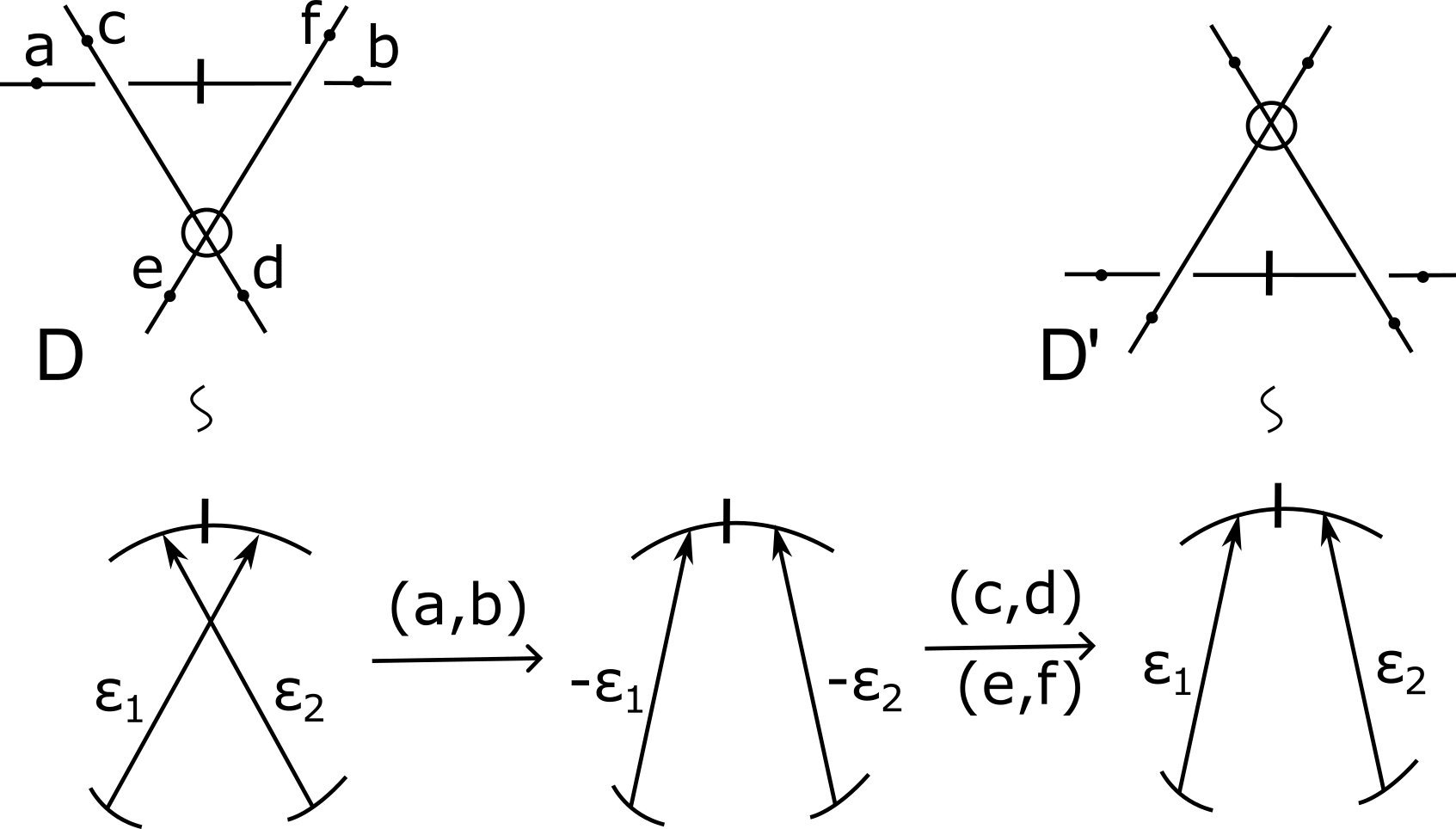}
		\caption{$F_3$ move is realized by one R.A.S.move.}
		\label{fig:forrasF4}
	\end{figure} 
	\begin{proposition}\label{pr:T4}
		Let $D$ be a twisted knot diagram and $D'$ be the diagram
		obtained from $D$ by applying the forbidden move $T_4$. Then, there exists a region $R$
		in $D$ such that region arc shift move on region $R$ results in the diagram equivalent to $D'$.
	\end{proposition}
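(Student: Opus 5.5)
I will follow the pattern of Proposition~\ref{pr:F3}: identify a suitable region $R$ in the local diagram of the forbidden move $T_4$ (Fig.~\ref{fig:forbidden2}), apply arc shift moves to every boundary arc of $R$, and then compare Gauss diagrams. The move $T_4$ slides a bar past a classical crossing. At the level of Gauss diagrams it changes the bar data on the arcs adjacent to that crossing, while the chord and its sign, possibly with a change of over/under information, are fixed as prescribed by the move. Because a twisted knot is determined by its Gauss diagram (chords with signs and bars on arcs) up to virtual and twisted moves, it suffices to show that the Gauss diagram produced by the R.A.S. move on $R$ agrees with that of $D'$.

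\textbf{Step 1: choosing $R$ and listing its boundary arcs.} In the local picture of $T_4$, the classical crossing $c$ is adjacent to a bar. I will take $R$ to be the small region bounded by the two strands at $c$ near the bar. After closing the picture with virtual crossings if necessary, $R$ is a bigon or triangle whose boundary consists of:
\begin{itemize}
\item one arc through $c$ that carries the bar, and
\item one or two arcs through $c$ and a virtual crossing that carry no bar.
\end{itemize}
Applying the R.A.S. move therefore means applying one Type~2 arc shift move (from Fig.~\ref{fig:type2}) on the barred arc and the appropriate Type~1 moves on the remaining arcs. If a cleaner region is available, I would first insert a pair of virtual crossings by $V_2$ so that $R$ has exactly this shape.

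\textbf{Step 2: tracking the effect on the Gauss diagram.} By the remark following the definition of arc shift, each arc shift move alters the order and sign of the crossings at the ends of the arc. The Type~2 move additionally redistributes the bar. I will write down the Gauss diagram of $D$ near $c$ and apply the effect of each arc shift in turn. Since $c$ lies on several boundary arcs, its endpoints get transposed more than once and its sign flips more than once. The goal is to verify that these changes cancel in pairs, leaving $c$ with its original chord orientation and sign, or with the orientation and sign required by $T_4$, while the bar ends up on the other side of $c$. Any surplus bars should occur in pairs on a single arc and can be removed with $T_2$. Any extra virtual crossings are invisible in the Gauss diagram.

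\textbf{Main obstacle.} The delicate step is the bar bookkeeping. I need to check that the Type~2 arc shift places the bar exactly where $T_4$ puts it, and that the parity of bars on each arc matches. If the first choice of $R$ gives the wrong parity, I will try the adjacent region on the other side of $c$, where the bar lies on a different boundary arc, and repeat the computation. I expect one of these two regions to work, as was the case for $F_3$ in Fig.~\ref{fig:forrasF4}. Once the Gauss diagrams match, the result follows.
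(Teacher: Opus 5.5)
There is a genuine gap, and it starts with what $T_4$ is. In this paper $T_4$ is not a bar sliding past a classical crossing while the chord is kept. It deletes a classical kink whose loop carries a bar. You can see this in the proof of Proposition~\ref{pr:for4}: there $T_4$ removes a chord $c$ from the Gauss diagram, with $ind(c)=0$ and $p^{O}(c)=1$.

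Consequently the target of your Gauss-diagram comparison is wrong. You aim to show that $c$ survives with its sign and orientation restored and the bar relocated, whereas $D'$ contains no chord $c$ at all. Your region is also not the right one. It is described as ``the small region bounded by the two strands at $c$ near the bar'', turned into a bigon or triangle by ``closing the picture with virtual crossings''; for a general crossing this is not well defined. Your fallback of trying the adjacent region and expecting one of the two to work is not an argument.

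The missing idea is specific to the kink. The relevant region is the monogon $R$ enclosed by the loop at $c$. Its boundary is a single edge running from $c$ back to $c$, which is not an arc in the sense of the definition, because it does not pass through a pair of crossings. So no arc shift can be applied to it as it stands.

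The paper first applies a $V_1$ move to put a virtual crossing $v$ on the loop; $V_2$ would not do, since only one strand is involved. The region inside the loop then becomes a bigon with vertices $c$ and $v$, bounded by two arcs, exactly one of which carries the bar. The R.A.S.\ move on this bigon consists of one Type~2 shift $\bar{A}_{s_2}$ on the barred arc and one Type~1 shift $A_{s_1}$ on the other.

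Your bookkeeping instinct does fit this corrected picture. Since $c$ lies on both arcs, its sign is flipped twice. What must be checked is that the two shifts carry the bar past an endpoint of $c$, so the loop at $c$ becomes bar-free. A final $R_1$ move then deletes $c$, giving $D'$. Without recognising $T_4$ as the deletion of a barred kink and adding the $V_1$ and $R_1$ steps, your plan would at best realise a bar-sliding move, which is not the move in the statement.
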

	\begin{proof}
		The forbidden move $T_4$ involves one region with a boundary arc that has only one classical crossing. As shown in Fig.~\ref{fig:forrasT4}, we introduce a virtual crossing using the $V1$ move and then apply the region arc shift move on the dotted region. Here, we apply one Type 1 arc shift move $A_{s_1}$ and one Type 2 arc shift move $\bar{A}_{s_2}$ on the boundary arcs. We observe from the figure that the resulting diagram after applying the region arc shift move on the dotted region is equivalent to the diagram of $D'$.
	\end{proof}
	\begin{figure}[h]
		\centering
		\includegraphics[width=0.6\textwidth]{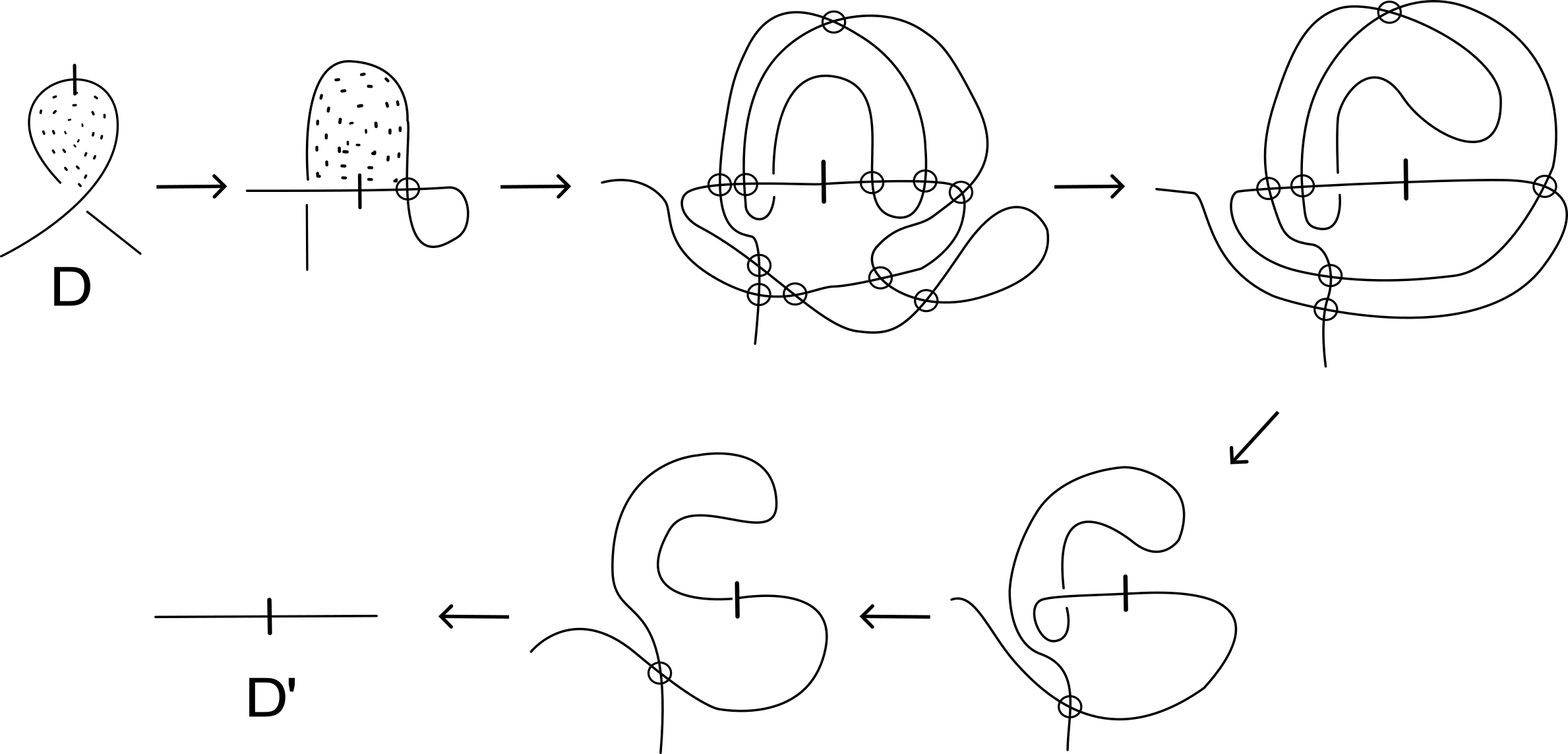}
		\caption{$T_4$ move is realized by one R.A.S move.}
		\label{fig:forrasT4}
	\end{figure}
	From Proposition~\ref{pr:F2},\ref{pr:F3},\ref{pr:T4}, we can conclude the following statement.
	\begin{theorem}\label{th:unop}
		The region arc shift move is an unknotting operation for twisted knots.
	\end{theorem}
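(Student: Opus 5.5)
The plan is to reduce Theorem~\ref{th:unop} to Theorem~\ref{th:for} of Xue and Deng, combined with Propositions~\ref{pr:F2}, \ref{pr:F3} and \ref{pr:T4}. Let $K$ be a twisted knot and $D$ a diagram of $K$, with Gauss diagram $G(D)$. By Theorem~\ref{th:for}, there is a finite sequence of moves
\[
G(D)=G_0\to G_1\to\cdots\to G_k
\]
ending at the Gauss diagram of a trivial twisted knot, with or without a bar. Each move is either one of the moves $R_1,R_2,R_3,T_2,T_3$ or one of the forbidden moves $T_4$, $F_1$ (or $F_2$), $F_3$ (or $F_4$). I will translate this sequence back to diagrams: choose twisted knot diagrams $D_i$ realizing $G_i$. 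Diagrams with the same Gauss diagram differ only by virtual Reidemeister moves $V_1$--$V_4$ (and $T_1$ for repositioning bars), so each step $D_{i-1}\to D_i$ is a forbidden or extended Reidemeister move, up to generalized Reidemeister moves.

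Next I handle the steps of the sequence one at a time. If $G_{i-1}\to G_i$ is an extended Reidemeister move, then $D_{i-1}$ and $D_i$ represent the same twisted knot, and no region arc shift move is needed. If it is a forbidden move, I apply Proposition~\ref{pr:F2}, \ref{pr:F3} or \ref{pr:T4} to the local picture in $D_{i-1}$. This produces a region $R$ of $D_{i-1}$ such that one region arc shift move at $R$ gives a diagram equivalent to $D_i$. For $T_4$, the region exists only after a $V_1$ move introduces a virtual crossing, as in the proof of Proposition~\ref{pr:T4}; that move is allowed, since it is a generalized Reidemeister move. Concatenating these steps shows that $D$ can be turned into a trivial twisted knot diagram using extended Reidemeister moves and finitely many region arc shift moves. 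The number of region arc shift moves is at most the number of forbidden moves in the Xue--Deng sequence.

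The main obstacle is the passage between Gauss diagrams and planar diagrams. The forbidden move acts on the Gauss diagram, while the region arc shift move needs an actual region of the planar diagram whose boundary arcs carry the local configuration of Figs.~\ref{fig:forrasF1}--\ref{fig:forrasT4}. To get this, I will first use virtual moves (the detour move) to draw $D_{i-1}$ so that the chords involved in the forbidden move appear as the standard local tangle, with no extra virtual crossings cutting the region. I will then check that the arc shift moves on the boundary arcs change only that local picture, up to virtual crossings.

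Finally, I will note that if the endpoint has a bar, it is still a trivial twisted knot by the Remark in Section~\ref{sec:pre}, so the unknotting is complete in either case.
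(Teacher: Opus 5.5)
Your proposal is correct and follows the paper's argument: the paper reduces the theorem, via Theorem~\ref{th:for} of Xue and Deng, to realizing each forbidden move $F_1$ (or $F_2$), $F_3$ (or $F_4$) and $T_4$ by a single region arc shift move, which is exactly what Propositions~\ref{pr:F2}, \ref{pr:F3} and \ref{pr:T4} provide. Your extra care in passing from Gauss-diagram moves to planar diagrams, via detour moves and $T_1$, fills in a detail the paper leaves implicit rather than taking a different route.
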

	\section{Forbidden number and region arc shift number of twisted knots}\label{sec:rasno}
	In Theorem~\ref{th:unop}, we prove that the region arc shift move is an unknotting operation for twisted knot diagrams. Also, by Theorem~\ref{th:for}, Xue and Deng proved that the set of forbidden moves $\{F_1$(or $F_2$), $F_3$(or $F_2$), $T_4\}$ transforms any twisted knot diagram to a trivial diagram. Motivated by these two results, we define two unknotting numbers, called the Forbidden number, denoted by $\tilde{F}(K)$ and the region arc shift number, denoted by $RA(K)$ of a twisted knot. Moreover, we find a lower bound for $\tilde{F}(K)$ using the polynomial $Q(s,t)$ and an upper bound for $RA(K)$ using its relation with $\tilde{F}(K)$.
	\begin{definition}
		The {\it forbidden number}, denoted by $ \tilde{F}(K)$ of  a twisted knot $K$ is the minimum number of forbidden moves $\{F_1$(or $F_2$), $F_3$(or $F_4$), $T_4\}
		$ needed to transform a diagram of $K$ to a trivial twisted diagram..
	\end{definition}
	\subsection{Forbidden moves and the polynomial $Q(s,t)$}
	In this section, we find a lower bound of the forbidden number using  the polynomial invariant $Q(s,t)$. First, we find the changes in the terms of $Q(s,t)$ after applying a single forbidden move for each of the move $\{F_1$(or $F_2$), $F_3$(or $F_4$), $T_4\}$. 
	\begin{proposition}\label{pr:for1}
		Let $K$ and $K'$ be two twisted knots which can be transformed into each
		other by a single forbidden move $F_1$(or $F_2$). And $\# b(K)$ denotes the number of bars on $K$. Then the following holds.\\
		 $\bullet$ If $\# b(K)$ is even, then
				$$Q_{K'}(s,t)-Q_{K}(s,t)
			=(-1)^{l_{1}} m_{1}(st-1)^{2}+(-1)^{l_{2}} m_{2}(t-1)^{2}+(-1)^{l_{3}}m_{3}(s-1)^{2},
			$$
			where $m_i \in \{0,1,2\}, l_{i}\in \{-1,1\}$ 
			and the coefficients satisfy one of the following mutually exclusive conditions:
			\begin{itemize}
				\item[(i)] $m_1=m_2=m_3=0$,
				\item[(ii)] $m_1=m_2=m_3=1$ and 
				 $(l_1=l_2$ or $l_1=l_3$ or $l_2=l_3)$,
				\item[(iii)] $m_i\in\{0,2\}; i=1,2,3$, with $m_1= m_2 \neq m_3$.
			\end{itemize}
		 $\bullet$ If $\# b(K)$ is odd, then
			$$Q_{K'}(s,t)-Q_{K}(s,t)=
			\pm m_{4}(st-1)(s-1)	$$
			for $m_{4}\in \{0,2\}$.
	\end{proposition}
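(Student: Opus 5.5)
The plan is to work entirely on Gauss diagrams. On a Gauss diagram, an $F_1$ (or $F_2$) move exchanges two adjacent endpoints of two chords $c_1,c_2$ that are both over-endpoints (respectively both under-endpoints), and there is no bar between them. Using Definition~\ref{def:Q}, I would compare $Q_{K}$ and $Q_{K'}$ term by term. Throughout, write $n=\#b(K)$. Since the move creates no bars, $\#b(K')=n$.

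\textbf{Step 1: terms of other crossings are unchanged.} For a crossing $c\neq c_1,c_2$, smoothing at $c$ cuts the Gauss circle along the chord of $c$. The two swapped endpoints are adjacent, so they lie on the same arc. Hence each of $c_1$ and $c_2$ keeps its role (self-crossing of $D_c^O$, self-crossing of $D_c^U$, or crossing between the two components). Their flat-sign contributions to $ind^*(c,D)$ are therefore unchanged, and the number of bars on $D_c^*$ is also unchanged. So only the summands for $c_1$ and $c_2$ can differ.

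\textbf{Step 2: what happens at $c_1$ and $c_2$.} Smoothing at $c_1$, the swap moves one endpoint of $c_2$ from one arc of the chord of $c_1$ to the other. Thus $c_2$ changes between being a crossing joining $D^O_{c_1}$ and $D^U_{c_1}$ and being a self-crossing of one of them. This changes $ind^O(c_1)$ by exactly $\pm 1$, so $\bar\rho(c_1)$ flips, and symmetrically $\bar\rho(c_2)$ flips. Since no bar lies between the swapped endpoints, $p^O(c_i)$ and $p^U(c_i)$ are unchanged. I expect this to be the main point needing care. Specifically, one has to check with the figure that the flat sign of $c_2$ enters $ind^O(c_1)$ as a single $\pm1$, and not as a change of $0$ or $\pm2$ through $c_2$ appearing twice in the traversal.

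\textbf{Step 3: the even-bar case.} Each summand has the form $\epsilon_i(s^{\rho}t^{p^O}-1)(s^{\rho}t^{p^U}-1)$ with $\epsilon_i=sgn(c_i)$. If $n$ is even then $p^O(c_i)=p^U(c_i)=p_i$, so the summand is $\epsilon_i(s^{\rho}t^{p_i}-1)^2$.
\begin{itemize}
\item If $p_i=1$, flipping $\rho$ replaces $(t-1)^2$ by $(st-1)^2$ or the reverse. The change is $\pm\big((st-1)^2-(t-1)^2\big)$.
\item If $p_i=0$, the term goes between $0$ and $(s-1)^2$. The change is $\pm(s-1)^2$.
\end{itemize}
Summing the contributions of $c_1$ and $c_2$ over the cases $(p_1,p_2)\in\{0,1\}^2$ and all sign choices gives the three cases. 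Two $p=1$ crossings produce cancellation (case (i)) or coefficients $\pm2,\mp2$ on $(st-1)^2,(t-1)^2$ with $m_3=0$ (case (iii)). Two $p=0$ crossings give $m_1=m_2=0$ and $m_3\in\{0,2\}$ (cases (i) and (iii)). Mixed $p$ gives $m_1=m_2=m_3=1$, where the signs of $(st-1)^2$ and $(t-1)^2$ are opposite, so $l_1\ne l_2$ forces $l_3$ to equal one of them (case (ii)). I would list the four sign patterns and compare them to (i)--(iii).

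\textbf{Step 4: the odd-bar case.} If $n$ is odd then $p^O(c_i)\neq p^U(c_i)$, so the summand is $\epsilon_i(s^{\rho}t-1)(s^{\rho}-1)$. This is $0$ for $\rho=0$ and $\epsilon_i(st-1)(s-1)$ for $\rho=1$. Each crossing therefore contributes $\pm(st-1)(s-1)$. The total change is $\pm m_4(st-1)(s-1)$ with $m_4\in\{0,2\}$.
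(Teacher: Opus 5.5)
Your proposal is correct and is essentially the paper's argument: compare the Gauss diagrams, observe that $F_1$ preserves the crossing signs and the values $p^{O}(c_i),p^{U}(c_i)$ while flipping the index parity of both $c_1$ and $c_2$ (the paper's $ind(c'_1)=ind(c_1)+\varepsilon_2$), and then run through the $p$-values and sign patterns separately in the even-bar and odd-bar cases. The differences are in presentation only. Your per-crossing bookkeeping (each $c_i$ contributes $\pm\big((st-1)^2-(t-1)^2\big)$, $\pm(s-1)^2$, or $\pm(st-1)(s-1)$) packages the paper's Case~I/Case~II parity split more compactly. Your Step~1 makes explicit what the paper uses silently, namely that the summands of all other crossings are unchanged. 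Your bookkeeping also gives the correct sign $2\varepsilon\big((st-1)^2-(t-1)^2\big)$ in the case $p_1=p_2=1$, $\varepsilon_1=\varepsilon_2$, where the paper writes $+(t-1)^2$; the conclusion is unaffected, since this is still case (iii).
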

	\begin{proof}
		We prove this for $F_1$ move. A similar argument applies to the case of $F_2$ move. Consider the Gauss
		diagrams $G$ and $G'$ corresponding to $K$ and $K'$ respectively (Fig.~\ref{fig:f1case}). Let us denote the  chords(and crossings) by $c_1, c_2$ and $c'_1, c'_2$,
		 before and after the $F_1$ move  respectively. 
		
We note that $F_1$ move does not change the sign of the crossings. Then, $sgn(c_1)=\varepsilon_{1}=sgn(c'_1)$ and $sgn(c_2)=\varepsilon_{2}=sgn(c'_2)$. 
		Also, \begin{equation}\label{con:p}
			p^{O}(c_{i})=p^{O}(c'_{i})  \quad \text{and} \quad p^{U}(c_{i})=p^{U}(c'_{i}) \; \text{for} \; i=1,2.
		\end{equation}
		\begin{figure}[h!]
			\centering
			\includegraphics[width=0.45\textwidth]{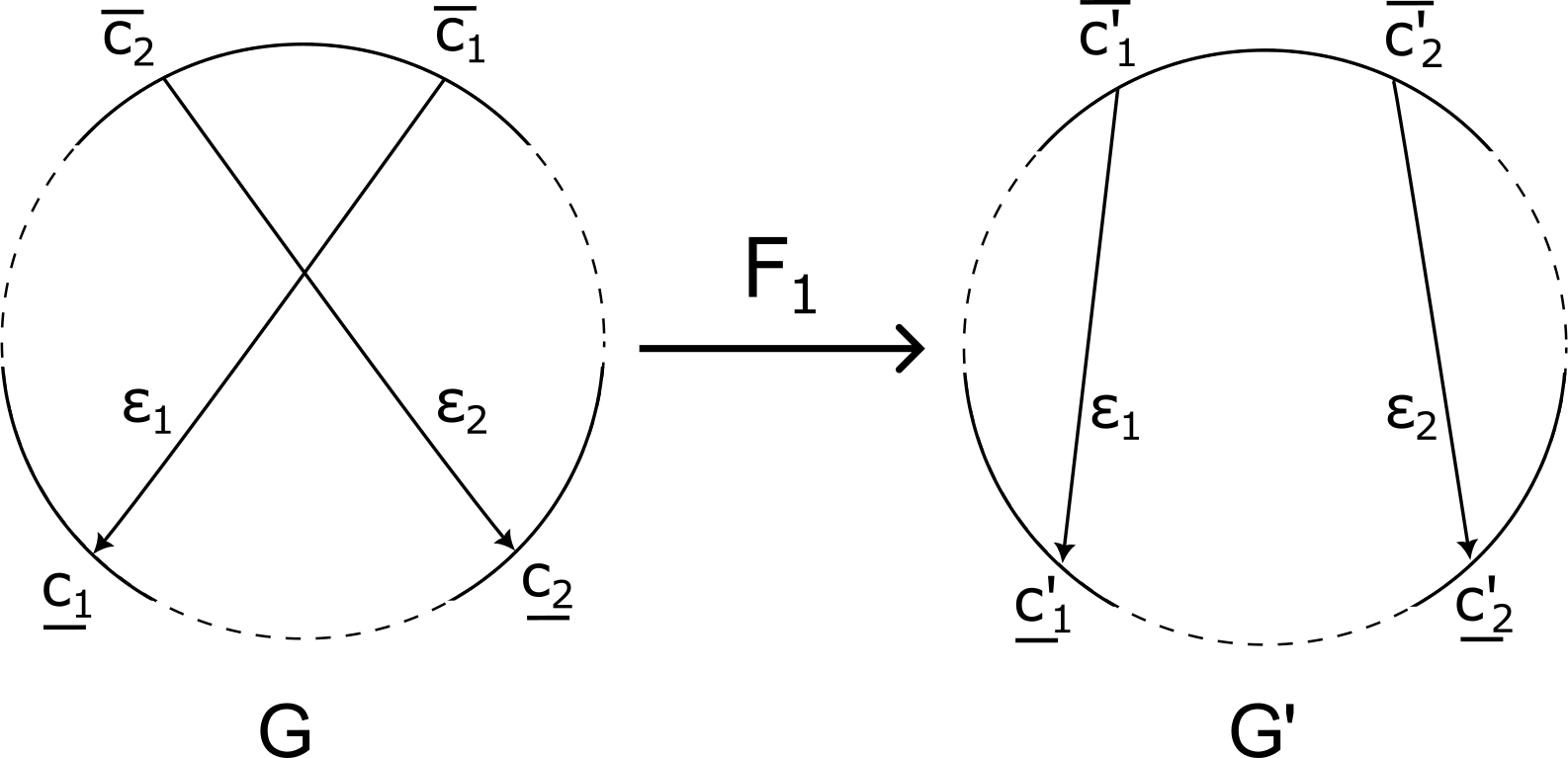}
			\caption{}
			\label{fig:f1case}
		\end{figure}
		Also, from Definition~\ref{def:Q} and  using (\ref{con:p}) we can write,
		\begin{align*}
			Q_{K'}(s,t)-Q_{K}(s,t)
				&= \varepsilon_{1}(s^{\bar{\rho}^{O}(c'_{1})} t^{p^{O}(c_{1})}-1)(s^{\bar{\rho}^{O}(c'_{1})}t^{p^{U}(c_{1})}-1)\\
			&+\varepsilon_{2}(s^{\bar{\rho}^{O}(c_{2})}t^{p^{O}(c_{2})}-1)(s^{\bar{\rho}^{O}(c'_{2})}t^{p^{U}(c_{2})}-1)\\ &-\varepsilon_{1}(s^{\bar{\rho}^{O}(c_{1})}t^{p^{O}(c_{1})}-1)(s^{\bar{\rho}^{O}(c_{1})}t^{p^{U}(c_{1})}-1)\\
			&-\varepsilon_{2}(s^{\bar{\rho}^{O}(c_{2})}t^{p^{O}(c_{2})}-1)(s^{\bar{\rho}^{O}(c_{2})}t^{p^{U}(c_{2})}-1).
		\end{align*}
		From Fig.~\ref{fig:f1case}, we observe that
		\[ind(c'_{1})=ind(c_{1}) + \varepsilon_{2},\]
		\[ind(c'_{2})=ind(c_{1}) - \varepsilon_{1}.\]

		Therefore, the indices are flipped from odd(even) to even(odd) after the move. We discuss
		all cases based on the indices of $c_1,c_2$ of being even or odd.\\
\noindent
		Case I: When both $ind(c_{1})$ and $ind(c_{2})$ are even.
		So, $ind(c'_{1})$ and $ind(c'_{2})$ both are odd and the corresponding values of $\bar{\rho}^{O}$ are as follows.
		\[\bar{\rho}^{O}(c_{1})=0=\bar{\rho}^{O}(c_{2}) \; \text{and} \; \bar{\rho}^{O}(c'_{1})=1=\bar{\rho}^{O}(c'_{2}). \]
		
		Therefore, \begin{align*}
			Q_{K'}(s,t)-Q_{K}(s,t) &=\varepsilon_{1}(st^{p^{O}(c_{1})}-1)(st^{p^{U}(c_{1})}-1)+\varepsilon_{2}(st^{p^{O}(c_{2})}-1)(st^{p^{U}(c_{2})}-1)\\ &-\varepsilon_{1}(t^{p^{O}(c_{1})}-1)(t^{p^{U}(c_{1})}-1)-\varepsilon_{2}(t^{p^{O}(c_{2})}-1)(t^{p^{U}(c_{2})}-1).
		\end{align*}
		When $\# b(K)$ is even, \[p^{O}(c_i)=p^{U}(c_i)\; , i=1,2.\]
		Then for the possible values of $\varepsilon_{1}, \varepsilon_{2}$, $p^{O}(c_i)$ and  $p^{U}(c_i)$ we have,\\
		For $\varepsilon_{1}=\varepsilon=\varepsilon_{2}$, $\varepsilon \in \{1,-1\}$,  \\
		$$Q_{K'}(s,t)-Q_{K}(s,t)
		=\begin{cases}
			2\varepsilon((st-1)^{2}+(t-1)^{2}), \hspace{2cm} \text{for} \; p^{O}(c_i)=1, i=1,2,\\
			\varepsilon((st-1)^{2}-(t-1)^{2}+(s-1)^{2}) , \quad \text{for} \; p^{O}(c_1)=1, p^{O}(c_2)=0,\\
			\varepsilon((st-1)^{2}-(t-1)^{2}+(s-1)^{2}), \quad \text{for} \; p^{O}(c_1)=0, p^{O}(c_2)=1,\\
			2\varepsilon(s-1)^{2}, \hspace{4.15cm} \text{for} \; p^{O}(c_i)=0, i=1,2.
		\end{cases}
		$$
		For $\varepsilon_{1}=\varepsilon=-\varepsilon_{2}$, $\varepsilon \in \{1,-1\}$,  \\
		$$Q_{K'}(s,t)-Q_{K}(s,t)
		=\begin{cases}
			0, \hspace{5.6cm} \text{for} \; p^{O}(c_i)=1, i=1,2\\
			\varepsilon((st-1)^{2}-(t-1)^{2}-(s-1)^{2}) , \quad \text{for} \; p^{O}(c_1)=1, p^{O}(c_2)=0,\\
			\varepsilon(-(st-1)^{2}+(t-1)^{2}+(s-1)^{2}), \; \text{for} \; p^{O}(c_1)=0, p^{O}(c_2)=1,\\
			0, \hspace{5.6cm} \text{for} \; p^{O}(c_i)=0, i=1,2.
		\end{cases}
		$$
		Now, when $\# b(K)$ is odd,
		\[p^{O}(c_i)\neq p^{U}(c_i)\; , i=1,2.\]
		For different values of $\varepsilon_{1}$, $\varepsilon_{2}$, $p^{O}(c_i)$ and  $p^{U}(c_i)$, we  get,
		$$Q_{K'}(s,t)-Q_{K}(s,t)=\begin{cases}
			2\varepsilon(st-1)(s-1),  \quad \text{for} \; \varepsilon_{1}=\varepsilon=\varepsilon_{2}, \varepsilon \in \{1,-1\},\\
			0,  \hspace{3cm} \text{for} \; \varepsilon_{1}=\varepsilon=-\varepsilon_{2},   \varepsilon \in \{1,-1\}.
		\end{cases}	$$
\noindent
	Case II:  One of $ind(c_{1})$ and $ind(c_{2})$ is odd and the other is even. Without loss of generality, let us assume that $ind(c_{1})$ is odd and $ind(c_{2})$ is even.
		So, $ind(c'_{1})$ is even and $ind(c'_{2})$ is odd and the corresponding values of $\bar{\rho}^{O}$ are as follows.
		\[\bar{\rho}^{O}(c_{1})=1=\bar{\rho}^{O}(c'_{2}) \; \text{and} \; \bar{\rho}^{O}(c'_{1})=0=\bar{\rho}^{O}(c_{2}). \]
		Also, \[p^{O}(c_{i})=p^{O}(c'_{i})  \; \text{and} \; p^{U}(c_{i})=p^{U}(c'_{i}) \; \text{for} \; i=1,2.\]
		Therefore, \begin{align*}
			Q_{K'}(s,t)-Q_{K}(s,t) &=\varepsilon_{1}(t^{p^{O}(c_{1})}-1)(t^{p^{U}(c_{1})}-1)+\varepsilon_{2}(st^{p^{O}(c_{2})}-1)(st^{p^{U}(c_{2})}-1)\\ &-\varepsilon_{1}(st^{p^{O}(c_{1})}-1)(st^{p^{U}(c_{1})}-1)-\varepsilon_{2}(t^{p^{O}(c_{2})}-1)(t^{p^{U}(c_{2})}-1).
		\end{align*}
		When $\# b(K)$ is even,\[p^{O}(c_i)=p^{U}(c_i)\; , i=1,2.\]
		Then for all the possible values of  $\varepsilon_{1}$, $\varepsilon_{2}$, $p^{O}(c_i)$ and  $p^{U}(c_i)$ we have the following.\\
		For $\varepsilon_{1}=\varepsilon=\varepsilon_{2}$,  $\varepsilon \in \{1,-1\}$,  \\
		$$Q_{K'}(s,t)-Q_{K}(s,t)
		=\begin{cases}
			0, \hspace{5.6cm} \text{for} \; p^{O}(c_i)=1
			, i=1,2,\\
			\varepsilon((st-1)^{2}+(t-1)^{2}-(s-1)^{2}), \quad \text{for} \; p^{O}(c_1)=1, p^{O}(c_2)=0,\\
			\varepsilon(-(st-1)^{2}-(t-1)^{2}+(s-1)^{2}),\; \text{for} \; p^{O}(c_1)=0, p^{O}(c_2)=1,\\
			0, \hspace{5.65cm} \text{for} \; p^{O}(c_i)=0, i=1,2.
		\end{cases}
		$$
		For $\varepsilon_{1}=\varepsilon=-\varepsilon_{2}$, $\varepsilon \in \{1,-1\}$,  
		$$Q_{K'}(s,t)-Q_{K}(s,t)
		=\begin{cases}
			(-2\varepsilon)((st-1)^{2}-(t-1)^{2}), \hspace{1.65cm} \text{for} \; p^{O}(c_i)=1
			, i=1,2,\\
			\varepsilon(-(st-1)^{2}+(t-1)^{2}-(s-1)^{2}), \quad \text{for} \; p^{O}(c_1)=1, p^{O}(c_2)=0,\\
			\varepsilon(-(st-1)^{2}+(t-1)^{2}-(s-1)^{2}),\quad \text{for} \; p^{O}(c_1)=0, p^{O}(c_2)=1,\\
			(-2\varepsilon)(s-1)^{2}, \hspace{3.9cm} \text{for} \; p^{O}(c_i)=0, i=1,2.
		\end{cases}
		$$
		 When $\# b(K)$ is odd,
		 \[p^{O}(c_i)\neq p^{U}(c_i)\; , i=1,2.\]
		Then for all the possible values of $p^{O}(c_i)$, $p^{U}(c_i)$, $\varepsilon_{1}$ and $\varepsilon_{2}$ we get,
		$$Q_{K'}(s,t)-Q_{K}(s,t)=\begin{cases}
			-2\varepsilon(st-1)(s-1), \quad \text{for} \; \varepsilon_{1}=\varepsilon=-\varepsilon_{2}, \varepsilon \in \{1,-1\},\\
			 0, \hspace{2.9cm}  \quad\text{for} \; \varepsilon_{1}=\varepsilon=\varepsilon_{2}, \varepsilon \in \{1,-1\}.
		\end{cases}	$$
		All the remaining cases follows from the cases discussed above.
	\end{proof}

	\begin{proposition}\label{pr:for3}
		Let $K$ and $K'$ be two twisted knots which transforms into each
		other by a single forbidden move $F_3(\text{or} F_4$).  And $\# b(K)$ denotes the number of bars on $K$. Then the following holds.\\
			$\bullet$ If $\#b(K)$ is even, then
			$$Q_{K'}(s,t)-Q_{K}(s,t)
			=(-1)^{l_{1}} m_{1}(st-1)^{2}+(-1)^{l_{2}} m_{2}(t-1)^{2}+(-1)^{l_{3}}m_{3}(s-1)^{2},
			$$
		where $m_i \in \{0,1,2\}, l_{i}\in \{-1,1\}$ and the
		coefficients satisfy one of the following mutually exclusive conditions:
		\begin{itemize}
			\item[(i)] $m_1=m_2=m_3=0$,
			\item[(ii)] $m_1=m_2=m_3=1$ and 
			( $l_1=l_2$ or $l_1=l_3$).
			\item[(iii)] $m_i\in\{0,2\}$ for all $i$, with $m_1\neq m_2=m_3$ and
			$l_2\neq l_3$.
		\end{itemize}
	$\bullet$ If $\# b(K)$ is odd, then
			$$Q_{K'}(s,t)-Q_{K}(s,t)=
			\pm m_{4}(st-1)(s-1),	$$
			for $m_{4}\in \{0,2\}$.
	\end{proposition}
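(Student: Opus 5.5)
We follow the same strategy as in the proof of Proposition~\ref{pr:for1}. We treat the $F_3$ move; $F_4$ is handled identically by mirroring the Gauss diagrams. Let $G$ and $G'$ be the Gauss diagrams of $K$ and $K'$ locally given by Fig.~\ref{fig:forbidden1}(b). The $F_3$ move exchanges the order of two adjacent endpoints of chords on the circle. Only those two chords, say $c_1,c_2$ (becoming $c'_1,c'_2$), have their counting components changed. Every other classical crossing keeps its $ind^{O}$, $\bar{\rho}^{O}$, $p^{O}$ and $p^{U}$, so its contribution to $Q$ is unchanged. Hence $Q_{K'}-Q_K$ reduces to the four-term expression in $c_1,c_2,c'_1,c'_2$ written in the proof of Proposition~\ref{pr:for1}.

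Next we record the local data. The $F_3$ move preserves signs, so $sgn(c_i)=sgn(c'_i)=\varepsilon_i$. Swapping adjacent endpoints does not move any bar across an endpoint, so the bar parities are preserved: $p^{*}(c_i)=p^{*}(c'_i)$ for $*\in\{O,U\}$. By contrast, swapping the endpoints of two chords moves one endpoint of each chord across the other chord. Reading off the Gauss diagram as in Fig.~\ref{fig:f1case}, each index changes by $\pm1$:
\[ind(c'_1)=ind(c_1)\pm\varepsilon_2,\qquad ind(c'_2)=ind(c_2)\pm\varepsilon_1.\]
So both parities flip, exactly as for $F_1$. The new feature of $F_3$, compared with $F_1$, lies in which endpoints are swapped: an over endpoint of one chord and an under endpoint of the other. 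This changes the \emph{relative} orientation data. The main point to establish is that, for $\# b(K)$ even, the relevant configurations are constrained: either the parities of $ind(c_1)$ and $ind(c_2)$ agree exactly when the signs $\varepsilon_1,\varepsilon_2$ are opposite, or a similar coupling holds. I expect this to be the main obstacle, since it is where $F_3$ must differ from $F_1$. I would settle it by writing down both Gauss diagrams explicitly and computing $ind^{O}$ through the flat-sign sum.

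The rest is the case analysis of Proposition~\ref{pr:for1}. The cases are: Case I, both indices even (or, symmetrically, both odd); Case II, mixed parity. Each is split by $\varepsilon_1=\pm\varepsilon_2$ and by $\# b(K)$ even, where $p^{O}(c_i)=p^{U}(c_i)$, or odd, where $p^{O}(c_i)\neq p^{U}(c_i)$. We expand using
\[(st-1)^2,\ (t-1)^2,\ (s-1)^2,\ (st-1)(s-1),\]
together with the identity $(st-1)(t-1)$-free simplifications already used before.

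For $\# b(K)$ odd, each nonzero term is $\pm(st-1)(s-1)$, which gives $\pm m_4(st-1)(s-1)$ with $m_4\in\{0,2\}$. For $\# b(K)$ even, the outcomes are $0$, $\pm\bigl((st-1)^2\pm(t-1)^2\pm(s-1)^2\bigr)$, $\pm2\bigl((st-1)^2\pm(t-1)^2\bigr)$ or $\pm2(s-1)^2$. After the constraint from the previous step eliminates the configurations not realized by $F_3$, the surviving list should match conditions (i)--(iii). In particular, the $2$-coefficient cases should have $m_1\neq m_2=m_3$ with opposite signs on $(t-1)^2$ and $(s-1)^2$, and the all-ones cases should satisfy $l_1=l_2$ or $l_1=l_3$. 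This is checked by comparing the table of computed differences against the statement.
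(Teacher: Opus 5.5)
Your proposal has a genuine gap at a step you treated as routine: the claim that $F_3$ moves no bar across an endpoint, so that $p^{*}(c_i)=p^{*}(c'_i)$. That holds for $F_1$, not for $F_3$. The $F_3$ configuration carries a bar on the arc between the two classical crossings. In the Gauss diagram, the two exchanged endpoints therefore sit on opposite sides of a bar, and after the move each has passed across it. The bar thus migrates from one counting component of $c_i$ to the other, giving $|p^{*}(c_i)-p^{*}(c'_i)|=1$ for $i=1,2$ and $*\in\{O,U\}$. This flip, together with the parity flip of the indices (which you have right), is exactly what distinguishes $F_3$ from $F_1$, and it is what the paper uses. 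The four-term expression must be written with $p^{*}(c'_i)=1-p^{*}(c_i)$ in the post-move terms, not copied from the $F_1$ proof. The over/under type of the exchanged endpoints, which you single out as the new feature, plays no role in computing $Q_{K'}-Q_K$.

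As written, your computation reproduces the $F_1$ list, which contradicts the statement. In Case I with $\varepsilon_1=\varepsilon_2=\varepsilon$ you would get $2\varepsilon\bigl((st-1)^2+(t-1)^2\bigr)$ for $p^{O}(c_i)=p^{U}(c_i)=1$, and $2\varepsilon(s-1)^2$ for $p^{O}(c_i)=p^{U}(c_i)=0$. With $\varepsilon_1=-\varepsilon_2$ you would get $\pm\bigl((st-1)^2-(t-1)^2-(s-1)^2\bigr)$. Each of these violates (ii) or (iii).

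The coupling you hope will discard these cases, between the parities of $ind(c_1),ind(c_2)$ and the signs $\varepsilon_1,\varepsilon_2$, does not exist. The rest of the Gauss diagram is arbitrary, so every combination of parities and signs occurs.

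With the flip of $p^{*}$, the same Case I/Case II enumeration yields exactly (i)--(iii). For example, the first case above becomes $2\varepsilon\bigl((s-1)^2-(t-1)^2\bigr)$, and the second becomes $2\varepsilon(st-1)^2$. Your odd case is unaffected. When $p^{O}(c)\neq p^{U}(c)$, flipping both just swaps them, and the term $(s^{\bar{\rho}}t^{p^{O}}-1)(s^{\bar{\rho}}t^{p^{U}}-1)$ is symmetric in them: it equals $(st-1)(s-1)$ if $\bar{\rho}=1$ and $0$ if $\bar{\rho}=0$.
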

	\begin{proof}
			We prove this for $F_3$ move. A similar argument applies in the case of $F_4$ move. Consider the Gauss
		diagrams $G$ and $G'$ corresponding to $K$ and $K'$ respectively (Fig.~\ref{fig:f3case}). Let us denote the  chords (and crossings) by $c_1, c_2$ and $c'_1, c'_2$ before and after the $F_3$ move  respectively.
			\begin{figure}[h!]
			\centering
			\includegraphics[width=0.45\textwidth]{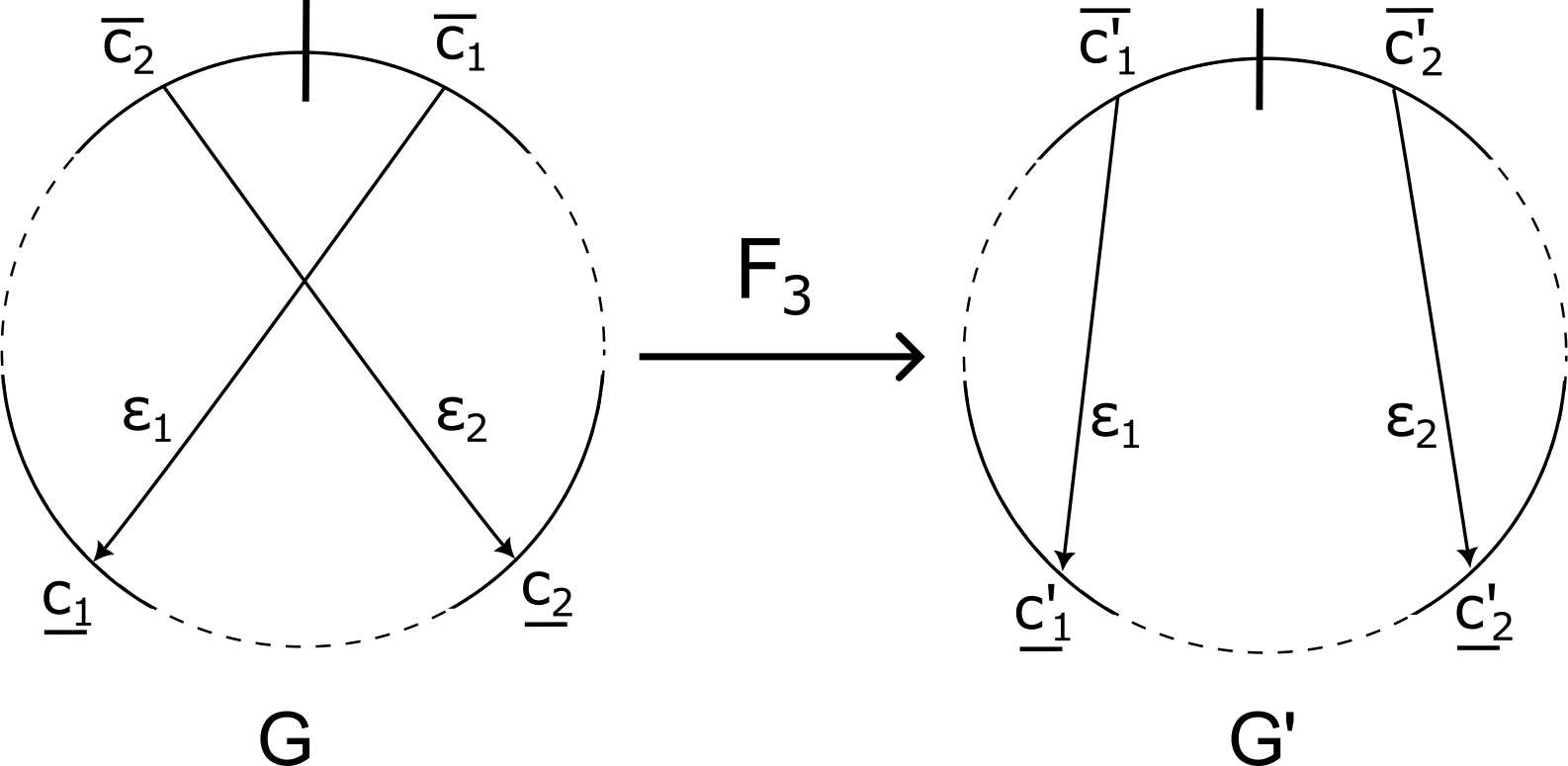}
			\caption{}
			\label{fig:f3case}
		\end{figure}
		
		We note that $F_3$ move does not change the sign of the crossings. Then, $sgn(c_1)=\varepsilon_{1}=sgn(c'_1)$ and $sgn(c_2)=\varepsilon_{2}=sgn(c'_2)$. 
		
		From Fig.~\ref{fig:f3case}, we observe that
		\[ind(c'_{1})=ind(c_{1}) + \varepsilon_{2},\]
		\[ind(c'_{2})=ind(c_{1}) - \varepsilon_{1}.\]
		Since, there is one bar between the chord ends  $\bar{c_1}$ and $\bar{c_2}$,
		\[|p^{*}(c_{i})-p^{*}(c'_{i})|=1 \; \text{for} \; i=1,2 , \; * \in \{O,U\}.\]
		Consequently, the indices change from odd(even) to even(odd) after the move and the values of $p^*, *\in \{O,U\}$ switch between $0$ and $1$. Also,  We discuss
		all the cases based on the indices of $c_1,c_2$ of being even or odd.\\
		\noindent
		Case I: When both $ind(c_{1})$ and $ind(c_{2})$ are even.
		So, $ind(c'_{1})$ and $ind(c'_{2})$ both are odd and the corresponding values of $\bar{\rho}^{O}$ are as follows.
		\[\bar{\rho}^{O}(c_{1})=0=\bar{\rho}^{O}(c_{2}) \; \text{and} \; \bar{\rho}^{O}(c'_{1})=1=\bar{\rho}^{O}(c'_{2}). \]
		
		Therefore, \begin{align*}
			Q_{K'}(s,t)-Q_{K}(s,t) &=\varepsilon_{1}(st^{p^{O}(c'_{1})}-1)(st^{p^{U}(c'_{1})}-1)+\varepsilon_{2}(st^{p^{O}(c'_{2})}-1)(st^{p^{U}(c'_{2})}-1)\\ &-\varepsilon_{1}(t^{p^{O}(c_{1})}-1)(t^{p^{U}(c_{1})}-1)-\varepsilon_{2}(t^{p^{O}(c_{2})}-1)(t^{p^{U}(c_{2})}-1).
		\end{align*}
	 When $\# b(K)$ is even, \[p^{O}(c_i)=p^{U}(c_i)\; , i=1,2.\]
		Then for all the possible values of $\varepsilon_{1}, \varepsilon_{2}$, $p^{O}(c_i)$ and  $p^{U}(c_i)$ we get the following values.\\
		For $\varepsilon_{1}=\varepsilon=\varepsilon_{2}$, where $\varepsilon \in \{1,-1\}$,
		$$Q_{K'}(s,t)-Q_{K}(s,t)
		=\begin{cases}
			2\varepsilon(st-1)^{2}, \hspace{4cm} \text{for} \; p^{O}(c_i)=0, i=1,2,\\
			\varepsilon ((st-1)^{2}+(s-1)^{2}-(t-1)^{2}) , \quad \text{for} \; p^{O}(c_1)=0, p^{O}(c_2)=1,\\
			\varepsilon ((st-1)^{2}+(s-1)^{2}-(t-1)^{2}), \quad \text{for} \; p^{O}(c_1)=1, p^{O}(c_2)=0,\\
			2\varepsilon((s-1)^{2}-(t-1)^{2}), \hspace{2.15cm} \text{for} \; p^{O}(c_i)=1, i=1,2.
		\end{cases}
		$$
		For $\varepsilon_{1}=\varepsilon=-\varepsilon_{2}$, where $\varepsilon \in \{1,-1\}$  \\
		$$Q_{K'}(s,t)-Q_{K}(s,t)
		=\begin{cases}
	\varepsilon ((st-1)^{2}-(s-1)^{2}+(t-1)^{2}) , \quad \text{for} \; p^{O}(c_1)=0, p^{O}(c_2)=1,\\
	\varepsilon (-(st-1)^{2}+(s-1)^{2}-(t-1)^{2}), \; \text{for} \; p^{O}(c_1)=1, p^{O}(c_2)=0,\\
0, \hspace{5.7cm} \text{otherwise}.
		\end{cases}
		$$
	When $\#b(K)$ is odd, we get the following.
		For $p^{O}(c_i),p^{U}(c_i) \in \{0,1\}$,
		$$Q_{K'}(s,t)-Q_{K}(s,t)=\begin{cases}
			2\varepsilon(st-1)(s-1),  \quad \text{for} \; \varepsilon_{1}=\varepsilon=\varepsilon_{2}, \varepsilon \in \{1,-1\},\\
			0,  \hspace{3cm} \text{for} \; \varepsilon_{1}=\varepsilon=-\varepsilon_{2},   \varepsilon \in \{1,-1\}.
		\end{cases}	$$
		\noindent
		Case II:  One of $ind(c_{1})$ and $ind(c_{2})$ is odd and the other is even. Without loss of generality, let us assume that $ind(c_{1})$ is odd and $ind(c_{2})$ is even.
		So, $ind(c'_{1})$ is even and $ind(c'_{2})$ is odd and the corresponding values of $\bar{\rho}^{O}$ are as follows.
		\[\bar{\rho}^{O}(c_{1})=1=\bar{\rho}^{O}(c'_{2}) \; \text{and} \; \bar{\rho}^{O}(c'_{1})=0=\bar{\rho}^{O}(c_{2}). \]
		Therefore, \begin{align*}
			Q_{K'}(s,t)-Q_{K}(s,t) &=\varepsilon_{1}(t^{p^{O}(c'_{1})}-1)(t^{p^{U}(c'_{1})}-1)+\varepsilon_{2}(st^{p^{O}(c'_{2})}-1)(st^{p^{U}(c'_{2})}-1)\\ &-\varepsilon_{1}(st^{p^{O}(c_{1})}-1)(st^{p^{U}(c_{1})}-1)-\varepsilon_{2}(t^{p^{O}(c_{2})}-1)(t^{p^{U}(c_{2})}-1).
		\end{align*}
		When  $\# b(K)$ is even, then
		\[p^{O}(c_i)=p^{U}(c_i)\; , i=1,2.\]
		Then for all the possible values of $\varepsilon_{1}, \varepsilon_{2}$, $p^{O}(c_i)$ and  $p^{U}(c_i)$ we have the following values.\\
		For $\varepsilon_{1}=\varepsilon=\varepsilon_{2}$, where $\varepsilon \in \{1,-1\}$,
		$$Q_{K'}(s,t)-Q_{K}(s,t)
		=\begin{cases}
			\varepsilon (-(st-1)^{2}+(s-1)^{2}-(t-1)^{2}),\hspace{0.15cm} \; \text{for} \; p^{O}(c_i)=1, i=1,2,\\
			\varepsilon ((st-1)^{2}-(s-1)^{2}+(t-1)^{2}), \; \quad\; \text{for} \; p^{O}(c_i)=0, i=1,2,\\
			0, \hspace{5.95cm} \text{otherwise.}
		\end{cases}
		$$
		For $\varepsilon_{1}=\varepsilon=-\varepsilon_{2}$, where $\varepsilon \in \{1,-1\}$,
		$$Q_{K'}(s,t)-Q_{K}(s,t)
		=\begin{cases}
			\varepsilon (-(st-1)^{2}-(s-1)^{2}+
			(t-1)^{2}),  \hspace{0.5cm} \text{for} \; p^{O}(c_i)=1,  i=1,2,\\
			-2\varepsilon(st-1)^{2},  \hspace{3.85cm}\text{for} \; p^{O}(c_1)=1, p^{O}(c_2)=0,\\
			2\varepsilon((t-1)^{2}-(s-1)^{2}),  \hspace{2.4cm} \text{for} \; p^{O}(c_1)=0, p^{O}(c_2)=1,\\
			\varepsilon (-(st-1)^{2}-(s-1)^{2}+(t-1)^{2}), \hspace{0.6cm} \text{for} \; p^{O}(c_i)=0, i=1,2.
		\end{cases}
		$$

		Now, when $\# b(K)$ is odd, then for the possible values of $\varepsilon_{1}, \varepsilon_{2}$, $p^{O}(c_i)$ and  $p^{U}(c_i)$ we have following values.
		$$Q_{K'}(s,t)-Q_{K}(s,t)=\begin{cases}
			-2\varepsilon(st-1)(s-1), \quad \quad \; \text{for} \; \varepsilon_{1}=\varepsilon=-\varepsilon_{2}, \varepsilon \in \{1,-1\},\\
			  0, \hspace{3.1cm}  \quad\text{for} \; \varepsilon_{1}=\varepsilon=\varepsilon_{2}, \varepsilon \in \{1,-1\}.
		\end{cases}	$$
		
		All the remaining cases follows from the cases discussed above.
	\end{proof}
	\begin{proposition}\label{pr:for4}
		Let $K$ and $K'$ be two twisted knots which can be transformed into each
		other by a single forbidden move $T_4$. Then we have
		\[Q_{K'}(s,t)-Q_{K}(s,t) =
		\begin{cases}
			-\varepsilon(t-1)^2, \; \; \text{when}\; \#b(K)\; \text{is even},\\
			0, \hspace{1.6cm} \text{when}\; \#b(K)\;  \text{is odd},
		\end{cases}\] 
		for $\varepsilon \in \{-1,1\}$.
	\end{proposition}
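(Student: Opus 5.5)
We follow the same Gauss-diagram approach as in Propositions~\ref{pr:for1} and \ref{pr:for3}. The $T_4$ move (Fig.~\ref{fig:forbidden2}) involves a single classical crossing $c$ together with a bar. In the Gauss diagram it amounts to sliding one endpoint of the chord $c$ past a bar, or equivalently moving a bar from one side of a chord endpoint to the other. Let $G$ and $G'$ be the Gauss diagrams of $K$ and $K'$, and let $c$ and $c'$ denote the chord before and after the move.

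\textbf{What the move does and does not change.} All other chords $c_j$ keep their endpoints, signs and relative order. For these chords, the smoothed components $D^{O}_{c_j}$ and $D^{U}_{c_j}$ contain exactly the same classical crossing endpoints, so their affine indices do not change. The bar stays in the same component for each of them, so $p^{O}(c_j)$ and $p^{U}(c_j)$ do not change either. Hence their terms in $Q$ cancel, and
\[
Q_{K'}(s,t)-Q_{K}(s,t)=\operatorname{sgn}(c')\,T(c')-\operatorname{sgn}(c)\,T(c),
\]
where $T(x)=(s^{\bar\rho^{O}(x)}t^{p^{O}(x)}-1)(s^{\bar\rho^{O}(x)}t^{p^{U}(x)}-1)$, using Remark~\ref{rem:over}.

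\textbf{Read off the local data for $c$ and $c'$ from the figure.} I expect $\operatorname{sgn}(c')=\operatorname{sgn}(c)=\varepsilon$, with the flat signs arranged so that $\operatorname{ind}^{O}(c')$ has the same parity as $\operatorname{ind}^{O}(c)$; this is because the set of chords interlacing $c$ does not change. So $\bar\rho^{O}$ is unchanged. The essential change is that the bar moves from one counting component to the other. Thus $p^{O}(c')=1-p^{O}(c)$ and $p^{U}(c')=1-p^{U}(c)$, while the total parity $p^{O}+p^{U}\equiv \#b(K) \pmod 2$ is preserved.

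\textbf{Case analysis.} If $\#b(K)$ is odd, then $\{p^{O},p^{U}\}=\{0,1\}$ both before and after the move. Swapping these values leaves the product $T$ symmetric and unchanged, so the difference is $0$.

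If $\#b(K)$ is even, then $p^{O}=p^{U}$, and the move toggles this common value between $0$ and $1$. The term therefore changes between $\varepsilon(s^{\bar\rho}-1)^2$ and $\varepsilon(s^{\bar\rho}t-1)^2$.

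To obtain exactly $-\varepsilon(t-1)^2$, the index must satisfy $\bar\rho^{O}(c)=0$, and the move must take $p=1$ to $p=0$ (with the orientation of the move, or the sign convention for $\varepsilon$, chosen accordingly). That is, the crossing in the $T_4$ configuration must have even index. I would verify this from the local picture: in the $T_4$ configuration the chord $c$ has no other chords interlaced with it in the local part of the diagram. If instead $\bar\rho=1$ could occur, the difference would be $\pm\bigl((st-1)^2-(s-1)^2\bigr)$, contradicting the statement.

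\textbf{Main obstacle.} The main obstacle is therefore this parity check of $\operatorname{ind}^{O}(c)$, together with confirming that the sign of $c$ and the flat-sign contributions are unaffected by passing through the bar. I would settle both by explicitly drawing $D^{O}_c$ and $D^{U}_c$ on the two sides of the $T_4$ move.
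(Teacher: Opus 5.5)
There is a genuine gap. You model $T_4$ as sliding an endpoint of a chord $c$ past a bar, with $c$ otherwise arbitrary. That is not the move, and for such a general move the statement would be false. Your own computation shows this: if $\bar\rho^{O}(c)=1$ and $\#b(K)$ is even, you get $\pm\bigl((st-1)^2-(s-1)^2\bigr)$. So the step you defer as the ``main obstacle'', $\bar\rho^{O}(c)=0$, is not a detail to check later; it is the entire content of the proposition. Your proposed justification, that no chord is interlaced with $c$ ``in the local part of the diagram'', does not give it. The parity of the index depends on every chord interlaced with $c$ anywhere in the Gauss diagram, not only the ones in the picture.

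The missing idea is that $T_4$ acts on a kink whose loop carries exactly one bar. In $G$ this is a chord $c$ whose two endpoints are separated by a single bar and by no other chord endpoint, and the move deletes $c$ while the bar stays on the strand.

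From this the proof is immediate:
\begin{itemize}
\item One counting component of $D_c$ is the loop itself. It contains no classical crossings and one bar, so $ind^{O}(c)=0$, $\bar\rho^{O}(c)=0$, and that component has $p=1$.
\item The other component carries the remaining $\#b(K)-1$ bars.
\item Hence the term of $c$ in $Q_K$ is $\varepsilon(t-1)(t^{p^{U}(c)}-1)$. This equals $\varepsilon(t-1)^2$ or $0$ according as $\#b(K)$ is even or odd.
\item Deleting $c$ changes no other term. The chord $c$ is interlaced with no chord, so for any other $c_j$ both endpoints of $c$ lie in the same component of $D_{c_j}$. There they contribute $+sgn(c)$ and $-sgn(c)$ to the affine index, which cancel. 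Also, the bar stays on the same side of every $c_j$.
\end{itemize}

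Your bar-slide computation can be recovered only in this special situation. Sliding an endpoint of this isolated chord past the bar gives a kink with no bar on its loop; its term vanishes because the loop factor is $(s^0t^0-1)=0$, and $R_1$ then removes it. But that already uses the kink structure you did not identify.
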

	
	\begin{proof}
		Consider the Gauss
		diagrams $G$ and $G'$ corresponding to $K$ and $K'$. We note that the $T_4$ removes a chord from $G$. Let us denote both the crossing and the chord of $K$ that is involved in the move $T_4$ by $c$. Let us assume that  $sgn(c)=\varepsilon$.
			\begin{figure}[h!]
			\centering
			\includegraphics[width=0.4\textwidth]{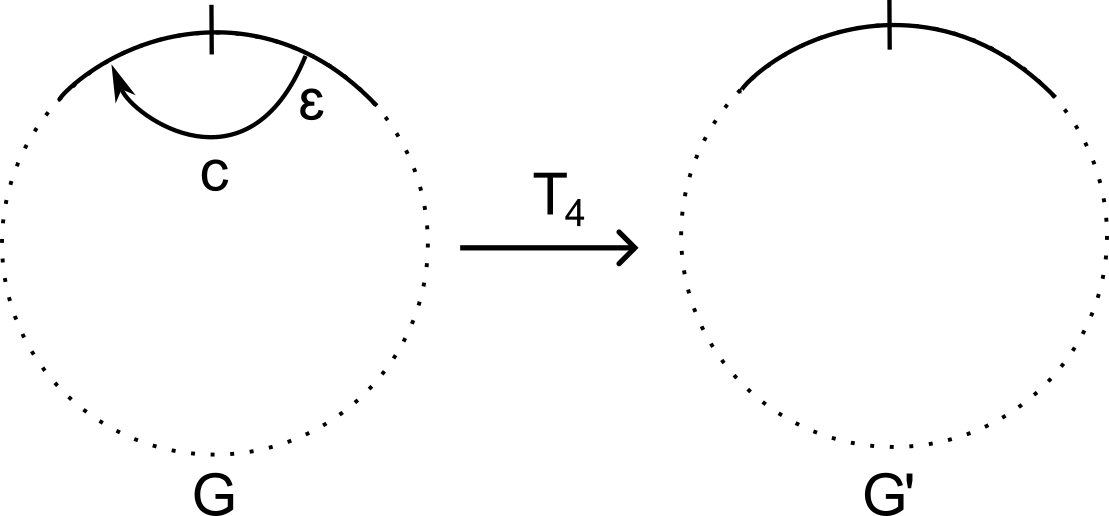}
			\caption{}
			\label{fig:t4case}
		\end{figure}
		From Fig.~\ref{fig:t4case}, we observe that $ind(c)$ is zero i.e even. So, $\bar{\rho}(c)=0$ and $p^{O}(c)=1$.
		Therefore,\[p^{U}(c)=\begin{cases}
			1, \quad \text{when}\; \#b(K) \; \text{is even},\\
			0, \quad \text{when}\; \#b(K) \; \text{is odd}.
		\end{cases}\]
		Hence, for $\varepsilon \in \{-1,1\}$,
		\[Q_{K'}(s,t)-Q_{K}(s,t) =
		\begin{cases}
			-\varepsilon(t-1)^2, \; \text{when}\; \#b(K)\; \text{is even},\\
			0, \hspace{1.6cm} \text{when}\; \#b(K)\;  \text{is odd}.
		\end{cases}\]
		
	\end{proof}
	By Proposition~\ref{pr:for1},\ref{pr:for3},\ref{pr:for4}, we have the following result.
	\begin{theorem}
		Let, $K$ be a twisted knot diagram. Then
		\[\tilde{F}(K) \geq  \frac{J(K)}{4}, \; \text{when} \; b(K) \;\text{is even}.\]
		and
		\[\tilde{F}(K) \geq \frac{J(K)}{2}, \; \text{when} \; b(K) \;\text{is odd
		}.\]
	\end{theorem}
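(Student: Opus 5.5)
The plan is to read the odd writhe $J(K)$ off the polynomial $Q_K(s,t)$ by specialising $t=1$. Then Propositions~\ref{pr:for1}, \ref{pr:for3} and \ref{pr:for4} bound how much $J$ can change under a single forbidden move, and a counting argument along an unknotting sequence gives the two inequalities.

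\emph{Step 1 ($J$ from $Q$).} In Definition~\ref{def:Q}, put $t=1$. A self-crossing $c$ contributes $sgn(c)(s^{\bar\rho^O(c)}-1)(s^{\bar\rho^U(c)}-1)$. By Remark~\ref{rem:over}, $\bar\rho^O(c)=\bar\rho^U(c)$, so this term equals $sgn(c)(s-1)^2$ if $c$ is odd and $0$ otherwise. The affine index and the index $ind(c)$ have the same parity; this is implicit in the way the paper uses $\bar\rho^O$ for odd crossings, and I would verify it briefly. Hence
\[Q_K(s,1)=J(K)\,(s-1)^2 .\]
For a trivial twisted knot (with or without a bar) there are no classical crossings, so $Q=0$ and $J=0$.

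\emph{Step 2 (change of $J$ under one move).} Specialise each difference formula at $t=1$. Then $(st-1)^2\mapsto (s-1)^2$, $(t-1)^2\mapsto 0$, and $(st-1)(s-1)\mapsto (s-1)^2$. This gives the following bounds on $|\Delta J|$.
\begin{itemize}
\item For $F_1$/$F_2$ and $F_3$/$F_4$ with an even number of bars, $\Delta J=\pm m_1\pm m_3$ with $m_i\in\{0,1,2\}$, so $|\Delta J|\le 4$. The listed cases actually force $|\Delta J|\le 2$, but the crude bound $4$ is enough for the first inequality.
\item For $F_1$/$F_2$ and $F_3$/$F_4$ with an odd number of bars, $\Delta J=\pm m_4$ with $m_4\in\{0,2\}$, so $|\Delta J|\le 2$.
\item For $T_4$, $\Delta J=0$ in both parities, since $-\varepsilon(t-1)^2$ vanishes at $t=1$.
\end{itemize}
Since $J$ is invariant under the extended Reidemeister moves, any sequence of $k$ forbidden moves, interspersed with extended Reidemeister moves, from a diagram of $K$ to a trivial diagram satisfies $|J(K)|\le 4k$. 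In the odd case it satisfies $|J(K)|\le 2k$, provided the bar count stays odd throughout. Taking $k=\tilde F(K)$ gives both inequalities, with $|J(K)|$ in place of $J(K)$, which is only stronger.

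\emph{Main obstacle.} The delicate point is the odd case. One must make sure that every intermediate diagram in a minimal sequence still has an odd number of bars, so that the sharper bound $|\Delta J|\le 2$ applies at each step. I would first check that the extended Reidemeister moves ($T_1,T_2,T_3$ and the others) change the number of bars by an even amount. Next I would check that $F_1$--$F_4$ do not alter bars. For $T_4$, if it changes the bar count by an odd number, I would note that it contributes $\Delta J=0$ in either parity. So the $J$-change is at most $2$ for every move occurring while the parity is odd, and at most $4$ otherwise. If parity switching cannot be excluded, the fallback is to use the refined even-case analysis, where the listed cases (ii) and (iii) give $|\Delta J|\le 2$ as well. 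That would yield $|J(K)|\le 2\tilde F(K)$ uniformly, and hence both stated bounds.
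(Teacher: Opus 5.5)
Your argument is correct and is essentially the paper's proof. The paper also reads $J(K)$ off $Q_K$: it evaluates at $s=2$, $t=1$ in the even case and uses $Q_K=J(K)(st-1)(s-1)$ in the odd case. It then sums the per-move bounds from Propositions~\ref{pr:for1}, \ref{pr:for3}, \ref{pr:for4} along a minimal sequence of forbidden moves.

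The bar-parity issue you flag is left implicit in the paper, and your fallback resolves it. Cases (i)--(iii) force $|\Delta J|\le 2$ for every $F$-move, and $T_4$ gives $\Delta J=0$, so you actually obtain $\tilde{F}(K)\ge |J(K)|/2$ regardless of parity.
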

	\begin{proof}
		Let, $\tilde{F}(K)=n$ and $K=K_{n}, K_{n-1}, \dots ,K_{0}$ be the sequence of twisted knots that are obtained while applying $n$ number of the forbidden moves with $K_0$ as the trivial twisted knot and each $K_i$ is obtained from $K_{i+1}$ by one forbidden move along with generalized Reidemeister moves.\\
		Case I: When $\#b(K)$ is even.\\
		The general form of $Q(s,t)$ is given by
		$$Q(s,t)=  a_{n_{1}} (st-1)^2 +  b_{n_{2}} (s-1)^2 +  c_{n_{3}} (t-1)^2,$$
		where $a_{n_{1}}, b_{n_{1}},c_{n_{1}} \in \mathbb{Z} $. Notice that, \[a_{n_{1}} +  b_{n_{2}} = J(K),\] where $J(K)$ is odd writhe, by the definition of $Q(s,t)$.\\
		Now, \begin{equation*}
			|Q_{K}(s,t)-Q_{K_n}(s,t)| \leq |Q_{K}(s,t)-Q_{K_1}(s,t)|+ |Q_{K_1}(s,t)-Q_{K_2}(s,t)|+\cdots+ |Q_{K_{n-1}}(s,t)-Q_{K_n}(s,t)|.
		\end{equation*}
		Using Proposition~~\ref{pr:for1},\ref{pr:for3},\ref{pr:for4} and the fact that $Q_{K_{0}}(s,t)=0$, we can write
		\begin{align*}
			|Q_{K}(s,t)-Q_{K_n}(s,t)| &\leq 2n ( (st-1)^{2}+(t-1)^{2}+(s-1)^{2}),\\
			\text{or,}\; |a_1 (st-1)^2 +  a_2 (s-1)^2 + a_3 (t-1)^2| &\leq 2n ( (st-1)^{2}+(t-1)^{2}+(s-1)^{2}).
		\end{align*}
		Substituting $s=2$ and $t=1$ on both sides, we get,
		$$|a_1 + a_2 |\leq 4n,$$
		which implies
		\[\tilde{F}(K) \geq \frac{|J(K)|}{4}.\]
		\noindent
		Case II: When $\#b(K)$ is odd.\\
		The general form of $Q(s,t)$ is given by
		$$Q(s,t)=  a_4 (st-1)(s-1), $$
		where $a_4 \in \mathbb{Z} $. Notice that, \[a_4 = J(K),\] where $J(K)$ is odd writhe, by the definition of $Q(s,t)$.
		Using Proposition~~\ref{pr:for1},\ref{pr:for3},\ref{pr:for4},  and the fact that $Q_{K_{n}}(s,t)=0$, we can write
		\begin{align*}
			|Q_{K}(s,t)-Q_{K_n}(s,t)| &\leq 2n (st-1)(s-1),\\
			\text{or,}\; |a_4 (st-1)(s-1)| &\leq 2n (st-1)(s-1),
			\text{or,}\; n \geq \frac{|a_4|}{2}.
		\end{align*}
		Hence,
		\[\tilde{F}(K) \geq \frac{|J(K)|}{2}.\]
		
	\end{proof}
	Now, we define  the region arc shift number as follows.
	\begin{definition}
		The {\it region arc shift number}, denoted by $ RA(K)$ of  a twisted knot $K$ is the minimum number of region arc shift moves needed to transform a diagram of $K$ to a trivial twisted diagram.
		
	\end{definition}
	By definition, we can see that $ RA(K)$ is a  twisted knot invariant. Also, $ RA(K)=0$ implies the twisted knot $K$ is a trivial twisted knot.
	\begin{example}
		The twisted knot $K$ in Fig.~\ref{fig:rarcs} has $RA(K)=1$.
	\end{example} 
	\begin{theorem}
		Let $\tilde{F}(K)$ be the forbidden number of a twisted knot $K$. Then, $RA(K) \leq \tilde{F}(K)$.
	\end{theorem}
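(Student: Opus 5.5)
The inequality follows by simulating a minimal forbidden-move unknotting sequence with region arc shift moves, one for one. Set $n=\tilde{F}(K)$. By definition there is a diagram $D$ of $K$ and a sequence
\[D=D_n \to D_{n-1} \to \cdots \to D_0,\]
where $D_0$ is a trivial twisted knot diagram. Each $D_{i-1}$ is obtained from $D_i$ by exactly one forbidden move from $\{F_1 \text{ (or } F_2),\, F_3 \text{ (or } F_4),\, T_4\}$, together with extended Reidemeister moves $R_1,R_2,R_3,V_1,\dots,V_4,T_1,T_2,T_3$. These Reidemeister moves cost nothing in either count, since both numbers are defined as minima over diagrams up to equivalence. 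So it suffices to replace each forbidden move by a single region arc shift move.

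Fix a step $i$. After applying extended Reidemeister moves, $D_i$ is equivalent to a diagram $E_i$ that contains the local picture on which the forbidden move acts. That move produces a diagram $E_i'$ equivalent to $D_{i-1}$. We split into three cases according to the type of the move.
\begin{itemize}
\item If the move is $F_1$ (or $F_2$), Proposition~\ref{pr:F2} gives a region $R$ of $E_i$ such that one region arc shift move on $R$ yields a diagram equivalent to $E_i'$.
\item If the move is $F_3$ (or $F_4$), Proposition~\ref{pr:F3} gives such a region.
\item If the move is $T_4$, Proposition~\ref{pr:T4} gives such a region.
\end{itemize}
The $T_4$ case needs a preliminary $V_1$ move to create a virtual crossing, which is again free. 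Chaining these replacements over $i=n,\dots,1$ turns $D$ into a diagram equivalent to $D_0$, a trivial twisted knot (with or without a bar), using exactly $n$ region arc shift moves. Hence $RA(K)\le n=\tilde{F}(K)$.

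The main point to check is that the three propositions apply to the forbidden move inside an arbitrary global diagram, not only to the isolated local picture. The local pattern of each forbidden move bounds a region whose boundary arcs lie entirely inside the local picture. These are the triangle region for $F_1$ and $F_3$, and the monogon-type region for $T_4$ after the $V_1$ move. An arc shift move changes the diagram only near the arc it is applied to, so the region arc shift on that region alters nothing outside the local disk.

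The bars also need care: the number of bars elsewhere can affect which type of arc shift (Type 1 or Type 2) is used. However, the type depends only on the bars lying on the boundary arcs of the region. Those bars are determined by the local picture, which is exactly the situation handled in the figures accompanying Propositions~\ref{pr:F3} and \ref{pr:T4}.
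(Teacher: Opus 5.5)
Your proof is correct and follows the paper's argument. You take a minimal forbidden-move unknotting sequence and replace each $F_1$/$F_2$, $F_3$/$F_4$ and $T_4$ move by a single region arc shift move using Propositions~\ref{pr:F2}, \ref{pr:F3} and \ref{pr:T4}, with extended Reidemeister moves (including the preliminary $V_1$ move for $T_4$) counted as free; your extra remarks on locality and on bars make explicit what the paper leaves implicit.
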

	\begin{proof}
		Let the forbidden number of a twisted knot, $\tilde{F}(K)=n$. Therefore, there exists a diagram $D$ of $K$ such that applying $n$ number of forbidden moves $(F_1$(or $F_2$), $F_3$(or $F_4$), $T_4)$ on $D$ results into a trivial twisted knot diagram. Now, by Proposition~\ref{pr:F2},\ref{pr:F3},\ref{pr:T4}, each forbidden move can be realized by one region arc shift move. Therefore, applying $n$ region arc shift move transforms $D$ into a trivial twisted diagram. Hence,
		\[RA(K) \leq n=\tilde{F}(K)\]
	\end{proof}
	\begin{example}
		In the following example (Fig.~\ref{fig:regionex2}), the knot $K$ has $RA(K)=1$. Applying the region arc shift move on the dotted region results into a trivial diagram with one bar. Also, 
		$\tilde{F}(K)= 2$. This is an example where the strict  inequality $RA(K) < \tilde{F}(K)$ holds.
		\begin{figure}[h!]
			\centering 
			\includegraphics[width=0.15
			\textwidth
			]{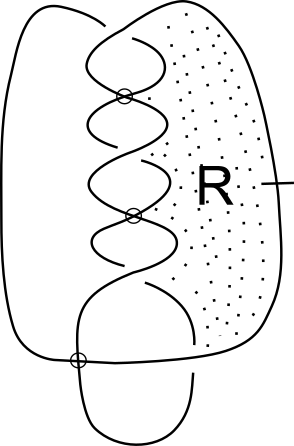}
			\caption{}
			\label{fig:regionex2}
		\end{figure}	
	\end{example}

	\begin{theorem}
		For each positive integer \(n \geq 1\), there exists a twisted knot \( K_n \) such that the region arc shift number of \(K_n\) is less than or equal to \(n\).
	\end{theorem}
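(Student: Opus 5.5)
The statement only asks for an upper bound $RA(K_n)\le n$ (together with the implicit requirement that the knots $K_n$ be genuinely different and nontrivial), so I will construct the family explicitly and exhibit $n$ region arc shift moves that unknot it.

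The natural choice is to reuse the diagrams $K_n$ of Theorem~\ref{th:Kn}, which are built from $n$ copies of the block $B_i$ (Fig.~\ref{fig:block1}). If that fails, I would fall back on a family built from $n$ copies of the knot in Fig.~\ref{fig:rarcs}, where one region arc shift move already suffices.

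First, I would look inside each block $B_i$ for a small region $R_i$ whose boundary arcs all lie in $B_i$. I expect a bigon or triangle bounded by arcs through $c_i,d_i,d'_i$ and possibly one virtual crossing, introduced by a $V_1$ move as in the proof of Proposition~\ref{pr:T4}. I would then check, using Gauss diagrams as in Propositions~\ref{pr:F3} and~\ref{pr:T4}, that one region arc shift move at $R_i$, followed by generalized Reidemeister moves, turns $B_i$ into two parallel strands, with or without a bar. The advantage of working with Gauss diagrams is that the effect of the move is then purely local.

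Alternatively, I can avoid the local computation altogether. Fig.~\ref{fig:block} shows that $B_i$ collapses after one arc shift move on the red arc $(a,b)$. If that arc together with virtual crossings bounds a region whose other boundary arcs pass only through virtual crossings, then the arc shift moves on those other arcs are trivial up to the virtual moves $V_1$--$V_4$. In that case the region arc shift move at this region coincides with the single arc shift move of Fig.~\ref{fig:block}.

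Since the blocks are disjoint and the moves are local, applying one region arc shift move in each of the $n$ blocks reduces $K_n$ to the trivial diagram of Fig.~\ref{fig:arcsn}. This gives $RA(K_n)\le n$.

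To show the family is meaningful, I would note that the knots are pairwise distinct and nontrivial. This follows directly from the proof of Theorem~\ref{th:Kn}: $Q_{K_n}(s,t)=(st-1)^2+(t-1)^2+(2n-1)(s-1)^2$ depends on $n$ and is nonzero, so $RA(K_n)\ge 1$.

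The main obstacle is the local verification that a single region arc shift move at a suitably chosen region really performs the same simplification as the one arc shift move. Every boundary arc of the region gets shifted, so the extra shifts on arcs through classical crossings could alter signs or crossing order elsewhere. I would first try to choose the region so that all its boundary arcs other than $(a,b)$ meet only virtual crossings, adding virtual kinks by $V_1$ if needed. If this is impossible, I would instead use the family of Fig.~\ref{fig:rarcs}, taking a connected sum of $n$ copies and unknotting each summand with the region already shown to work.
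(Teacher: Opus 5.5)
There is a genuine gap: the one step that carries content is never supplied. Read literally, the statement is already satisfied by the trivial knot. Its substance therefore lies entirely in a nontrivial family together with, in each block, an explicit region where one region arc shift move (plus generalized Reidemeister moves) collapses the block. For the blocks $B_i$ of Theorem~\ref{th:Kn} you only say you expect some bigon or triangle to work; nothing shows such a region exists.

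Your shortcut for avoiding this check cannot work. Consecutive boundary arcs of a region share a crossing. So if $(a,b)$ passes through $(c_1,c_2)$, the two boundary arcs of $R$ adjacent to it pass through $c_1$ and $c_2$ respectively. For the shift on $(a,b)$ to do anything, at least one of $c_1,c_2$ must be classical. (If both are virtual, the shift only reverses a piece carrying bars and virtual crossings, which leaves the Gauss diagram unchanged.) Hence some other boundary arc of $R$ always passes through a classical crossing, and inserting $V_1$ kinks does not change this. In fact every classical vertex of $R$ is passed by two shifted arcs, one along each of its strands, so its sign is altered twice rather than once. The R.A.S. move at $R$ is therefore a genuinely different local operation from the single arc shift of Fig.~\ref{fig:block}. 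Whether it still trivializes $B_i$ is exactly the computation you postponed.

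Your fallback also has a gap. As for virtual knots, a connected sum of twisted knots depends on the diagrams and on where you cut. One R.A.S. move and extended Reidemeister moves trivialize the closed diagram of Fig.~\ref{fig:rarcs}, but that does not mean the summand becomes a trivial strand inside the sum. A diagram of the trivial knot can be nontrivial as a long knot; the Kishino knot is a connected sum of two trivial virtual knots. You would have to check that the trivializing sequence avoids the joining point. You would also need a new invariant computation to see that the $n$-fold sums are distinct, since the $Q$-computation you cite concerns only the family of Theorem~\ref{th:Kn}.

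The paper sidesteps all of this with a different family (Fig.~\ref{fig:rasfam1}). Its repeating piece is shown explicitly in Fig.~\ref{fig:localras} to become a single virtual crossing after one R.A.S. move at a specified region. The knots are then separated by $Q_{K_n}(s,t)=n(st-1)^2-n(s-1)^2$. That explicit local verification is what your proposal would need to supply.
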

	\begin{proof}
		We take $K_{n}$ as the twisted knot diagram shown in Fig.~\ref{fig:rasfam1}. 
		We denote the classical crossings in $K_{n}$ by $c_{i}$ and $c'_{i}, i=1,\dots,n$.
		
		\begin{figure}[h!]
			\centering 
			\includegraphics[width=0.45\textwidth,height=0.25\textwidth]{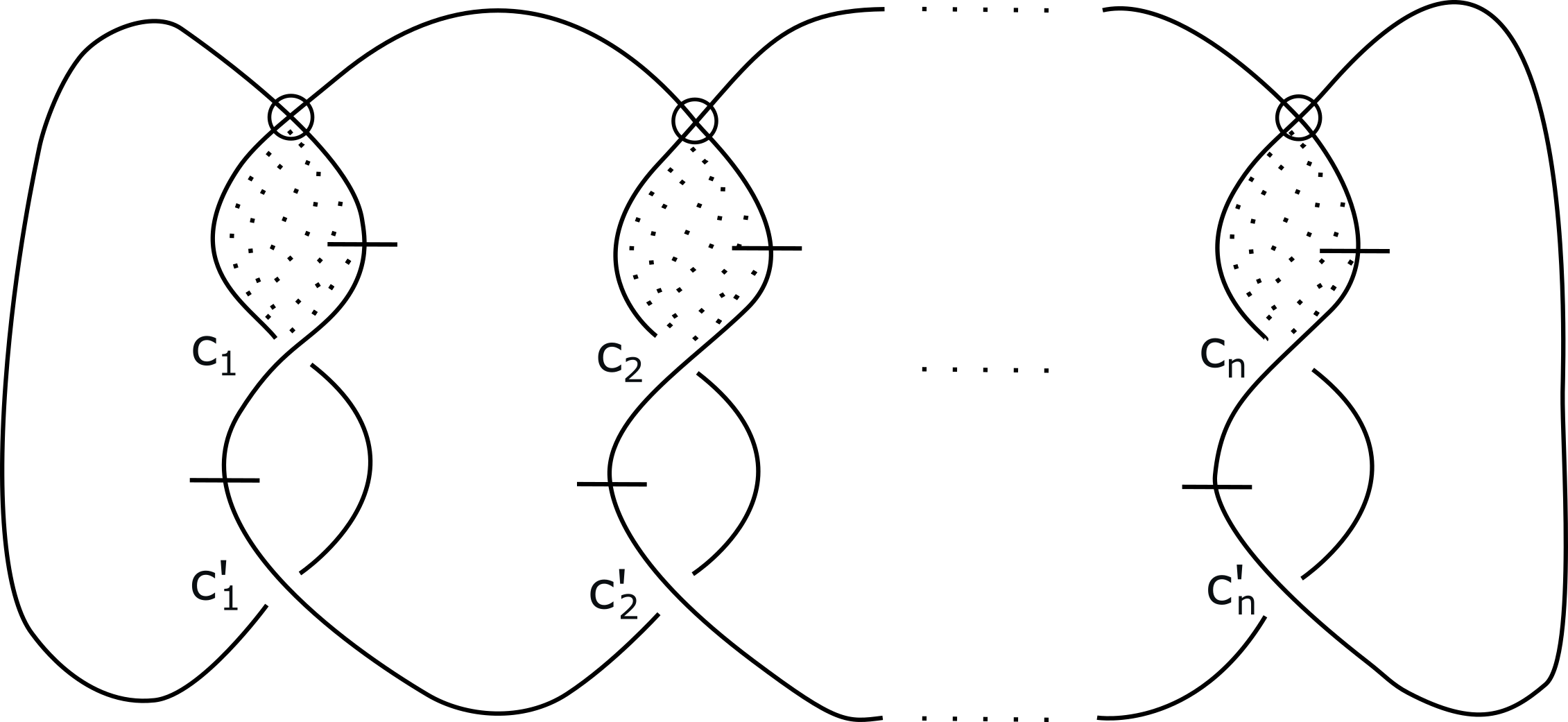}
			\caption{$K_{n}.$}
			\label{fig:rasfam1}
		\end{figure}
			\begin{figure}[h!]
			\centering 
			\includegraphics[width=0.7\textwidth,height=0.3\textwidth]{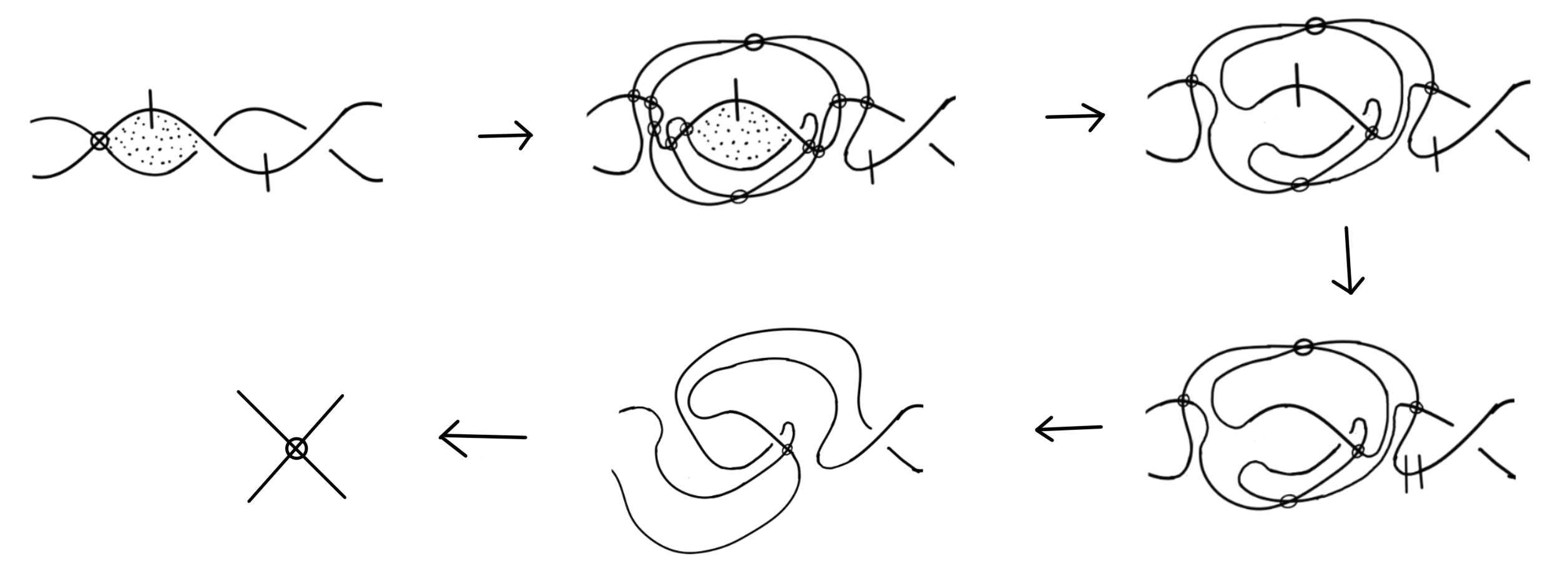}
			\caption{Local diagram transforms into a virtual crossing after one R.A.S moves.}
			\label{fig:localras}
		\end{figure} 
		First, we show that the local diagram in Fig.~\ref{fig:localras} converts into a virtual crossing after applying a R.A.S move in the dotted region. Therefore, it is easy to observe that the twisted knot $K_{n}$ can be transformed to a trivial twisted knot by applying $n$ R.A.S moves and generalized Reidemeister moves (See Fig.~\ref{fig:rasfam2}). Therefore,
		\[A(K_{n}) \leq n.\]
		\begin{figure}[h!]
			\centering 
			\includegraphics[width=0.5
			\textwidth,height=0.5\textwidth]{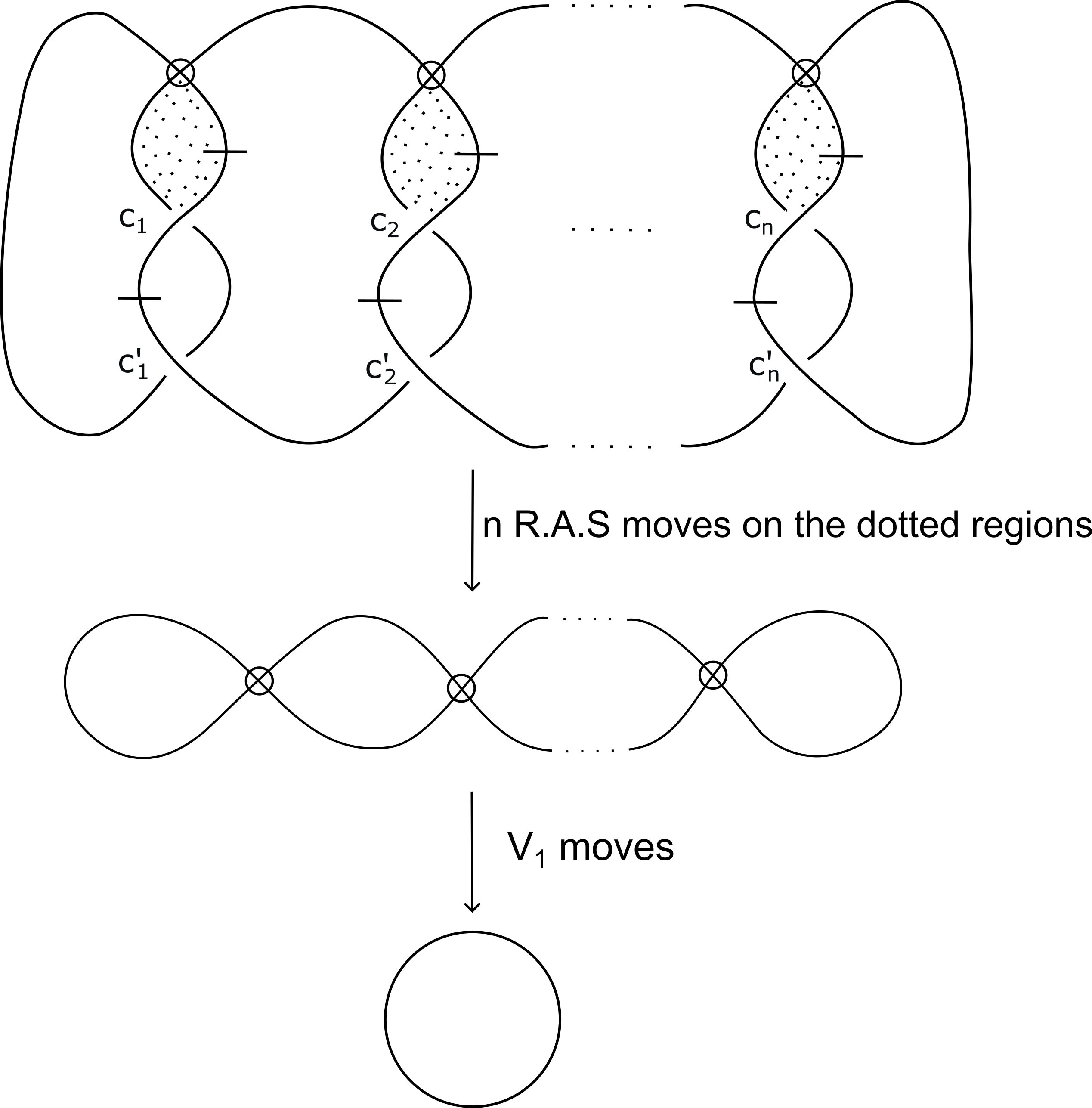}
			\caption{$K_{n}$ transforms to a trivial twisted knot after $n$ R.A.S moves.}
			\label{fig:rasfam2}
		\end{figure}

		Now, we compute the polynomial invariant $ Q(s,t)$ for $K_{n}$ to show that $K_n$ are distinct for every $n\geq 1$. The required values of  $\bar{\rho}^{O}(c)$, $p^{O}(c)$ and $p^{U}(c)$ for $K_{n}$ are given in Table~\ref{tab:$Kn$}. 
		
		\begin{table}[h!]
			\centering
			\begin{tabular}{ |c|c|c|c|c|}
				\hline
				crossings c & $ind^O(c)$ & $\bar{\rho}^{O}(c)$ & $p^{O}(c)$ & $p^{U}(c)$ \\ 
				\hline 
				$c_{(2i-1)}$ & $-1$ & $1$ & $1$ & $1$ \\ 
				\hline
				$c_{(2i)}$ & $1$ & $1$ & $1$ & $1$ \\
				\hline
				$c'_{(2i-1)}$ & $-1$ & $1$ & $0$ & $0$ \\
				\hline
				$c^{\prime}_{(2i)}$ & $1$ & $1$ & $0$ & $0$  \\
				
				\hline
				
			\end{tabular}
			\caption{}
			\label{tab:$Kn$}
		\end{table}
		Therefore,  \[ Q_{K_n}(s,t) = n(st-1)^2-n(s-1)^2 \; \text{for}\; n \geq 1.\]
	\end{proof}

	\section*{Acknowledgments}
	The authors acknowledge the financial support from the Anusandhan National Research Foundation (ANRF), Government of India(Grant no: CRG/2023/004921/343).

	\bibliographystyle{plain}

\end{document}